\documentclass[psamsfonts]{amsart}
\usepackage{
graphicx,
fancyhdr,
amsmath,
amsfonts,
dsfont,
amsthm,
amssymb,
enumerate,
mathrsfs,
hyperref,
xcolor,
ulem,
cancel,
enumitem,
cleveref
}

\pagenumbering{arabic}

\newcommand{\bpf}[1][Proof]{{\noindent {\sc #1: }}}
\newcommand{\epf}{{{\hfill $\Box$ \smallskip}}}

\newcommand{\stepdensity}{p}
\newcommand{\E}{\mathbb{E}}
\newcommand{\R}{\mathbb{R}}

\newcommand{\Z}{\mathbb{Z}}

\newcommand{\N}{\mathbb{N}}
\newcommand{\bb}[1]{\mathbf{#1}}
\newcommand{\1}{\mathds{1}}

\newcommand{\Prb}{\mathbb{P}}
\newcommand{\QQ}{\mathbb{Q}}

\newcommand{\Bc}{\mathcal{B}}
\newcommand{\momentnumber}{\nu}
\newcommand{\logExp}{\vartheta}
\newcommand{\freeEnergy}{\rho}
\newcommand{\X}{\mathbb{X}}
\newcommand{\Ofinite}{\Omega_1} 
\newcommand{\OfinitePoint}{\Omega_2}
\newcommand {\OfiniteBoth}{\Omega_0}
\newcommand{\densityRatio}{r_0}

\newtheorem{proposition}{Proposition}[section]
\newtheorem{corollary}{Corollary}[section]
\newtheorem{lemma}{Lemma}[section]
\newtheorem{theorem}{Theorem}[section]

\theoremstyle{definition}
\newtheorem{definition}{Definition}[section]
\newtheorem{assumption}{Assumption}
\renewcommand*{\theassumption}{\Alph{assumption}}
\newtheorem{remark}{Remark}

\newlist{assumpRequirements}{enumerate}{10}
\setlist[assumpRequirements]{label*=\Roman*}
\setlist[assumpRequirements,1]{label=(\Roman*), ref = \theassumption.\Roman*}
\crefname{assumpRequirementsi}{assumption}{assumptions}

\newlist{requirements}{enumerate}{10}
\setlist[requirements]{label*=\alph*}
\setlist[requirements,1]{label=\rm(\alph*), ref = \rm(\alph*)}
\crefname{requirementsi}{requirement}{requirements}

\numberwithin{equation}{section}

\chead{Yuri Bakhtin and Douglas Dow}

\author{Yuri Bakhtin and Douglas Dow}
\title{Joint Localization of Directed Polymers}

\begin{document}

\begin{abstract}
    We consider $(1+1)$-dimensional directed polymers in a random potential and provide sufficient conditions guaranteeing joint localization. Joint localization means that for typical realizations
of the environment, and for polymers started at different starting points, all the associated endpoint distributions localize
in a common random region that does not grow with the length of the polymer. In particular, we prove that joint localization holds when the reference random walk of the polymer model is either a simple symmetric lattice walk or a Gaussian random walk. We also prove that
the very strong disorder property holds for a large class of space-continuous polymer models, implying the usual single polymer localization. 
\end{abstract}

\maketitle

\section{Introduction}

The term {\it directed polymers} refers to a class of models describing a random elastic chain in $\R^d$ interacting with its random environment. The random distributions on these chains or paths are given by random Gibbs measures, with Hamiltonian composed of the energy of local self-interaction and the energy of interaction with the environment.  

Pinning one of the endpoints of the polymer chain and parametrizing the chain by time, one can view it as a {\it random walk in random potential.} 
Time is a distinguished coordinate in this $(1+d)$-dimensional model. 
In the absence of interaction with the environment, i.e., when the external potential is zero, random walks are diffusive: the 
distribution of the 
free endpoint of the path of length $n$ is approximately Gaussian with variance of order $n$, so it spreads out and thins out as $n\to\infty$.

In the presence of an external potential,  favorable and unfavorable regions for polymers are created randomly, and one of the central questions in the theory is how these impurities change the diffusive behavior, under the assumption that the environment evolves and decorrelates in time.

This assumption implies that different parts of the polymer path are exposed to different states of the environment.  Attractive regions and other environment features are dynamically created and destroyed at every time step, and so it is fascinating that, despite this, the random distribution of the polymer endpoint is often localized. In other words, for large $n$ it does not spread out like a Gaussian distribution with large variance. 

Conditions guaranteeing various forms of localization have been extensively studied in the literature over the last two decades,
 see \cite{CH02}, \cite{CSY03},
\cite{Comets-Shiga-Yoshida:MR2073332}, 
\cite{Carmona-Hu:MR2249669}, \cite{Comets-Shiga-Yoshida:MR2073332}, \cite{Comets-Yoshida:MR2117626}, \cite{Rovira-Tindel:MR2129770}, \cite{CV06}, \cite{Var07}, \cite{Lac10},   \cite{Comets-Cranston:MR3038513}, \cite{Comets-Yoshida:MR3085670},
\cite{Bates-Chatterje:MR4089496}, \cite{Bat18}, \cite{Broker-Mukherjee:MR4047991}, \cite{bakhtin-seo2020localization}, \cite{Bates:MR4269204},
\cite{Das-Zhu:https://doi.org/10.48550/arxiv.2203.03607}. Localization for directed polymers is one of the main themes of the monograph
 \cite{Comets:MR3444835}.

In these works, various manifestations of localization were studied such as presence of uniformly heavy atoms for the endpoint distribution, its asymptotic pure atomicity, asymptotically nonvanishing replica overlap, and similar stronger notions in terms of entire paths. The new notion of {\it geometric localization} introduced for lattice models in  \cite{Bates-Chatterje:MR4089496}, \cite{Bat18}, and studied in \cite{bakhtin-seo2020localization} for continuous space models  (along with the several new notions of asymptotic clustering replacing asymptotic pure atomicity
studied for lattice systems in \cite{Var07},  \cite{Bates-Chatterje:MR4089496},  \cite{Bat18})   
essentially means that despite the growing length of the polymer,
its endpoint distribution mostly concentrates in a random region of size of order 1. This is, of course, in sharp contrast with the diffusive behavior observed for classical symmetric random walks. Localization is related to the phenomenon of intermittency for solutions of the stochastic heat equation, see
\cite{CM94}, \cite{BC95}, \cite{Kho14}.  The main factors contributing to the localization/delocalization are the strength of the random potential in relation to the temperature and the dimension $d$.

With every polymer model one can associate its Lyapunov exponent  characterizing the discrepancy between the density of quenched and annealed free energies, see precise definitions below.  One says that {\it very strong disorder} holds if the Lyapunov exponent is strictly positive. Results of \cite{Bates-Chatterje:MR4089496} and 
their generalizations in~\cite{bakhtin-seo2020localization} establish that very strong disorder is equivalent to 
geometric localization. In particular, applying results of \cite{CV06} and \cite{Lac10}, we obtain that geometric localization holds for a broad class of lattice models with i.i.d.\ environments in dimensions $1+1$ and $1+2$ and all temperature values. It is known that very strong disorder holds only for sufficiently low temperatures in dimensions $d\ge 3$: in this case, the so-called {\it weak disorder} holds for high temperatures,
see \cite{IS88}, \cite{Bol89}, \cite{Albeverio-Zhou:MR1371075}, \cite{Sinai:MR1341729}, \cite{Kifer:MR1452549}. It is conjectured in \cite{BK18} that a similar picture holds for a broader class of generalized Hamilton--Jacobi polymers.

A remarkable explicit representation for the limiting random probability density 
of the endpoint (and intermediate points) of the space-time white noise continuous polymers in $1+1$ dimension was recently obtained in \cite{Das-Zhu:https://doi.org/10.48550/arxiv.2203.03607}.

\bigskip

We are interested in the joint behavior of polymer measures exposed to  the same environment but with different endpoints. Such  polymers, 
in dimension $1+1$,
with Gaussian random walk as a reference measure, i.e., with quadratic nearest neighbor self-interaction, play an important 
role in the ergodic theory of the heat/Burgers/KPZ equation with random kick forcing: one can construct attracting global random solutions in terms of Busemann functions associated with thermodynamic limits of polymer measures with different endpoints, see~\cite{Bakhtin-Li:MR3911894}
where all these objects were constructed. We will refer to them as GRW (Gaussian random walk) polymers in this paper.

It is natural to conjecture that if localization holds for individual polymers, then polymers with different endpoints exposed to the same environment must be localized in the same region, i.e.,
one can find a random set of size of order~$1$ containing most of the mass of both endpoint distributions, of both polymers pinned at different endpoints.

Our main result is that this is true in the $(1+1)$-dimensional case under natural additional assumptions. We also check these additional assumptions and derive that joint localization indeed holds for two classes of polymers in $1+1$ dimensions. The first class of models is the GRW polymers studied in~\cite{Bakhtin-Li:MR3911894}. The second class is the lattice polymers with symmetric simple random walk (SSRW) as the reference measure, in an i.i.d.\ environment.  
 
We note that while the very strong disorder for lattice polymers was established in~\cite{CV06} and similar results have been 
obtained for some continuous-space polymers
in \cite{Comets-Yoshida:MR2117626}, \cite{Rovira-Tindel:MR2129770}, \cite{Broker-Mukherjee:MR4047991}, \cite{Das-Zhu:https://doi.org/10.48550/arxiv.2203.03607}, the same property for 
GRW polymers has not appeared in the literature to the best of our knowledge.
In this note, we adapt the proof from~\cite{CV06} and derive that very strong disorder holds for a large class of space-continuous polymers, including GRW polymers, which in conjunction with results from \cite{Bakhtin-Li:MR3911894}, \cite{bakhtin-seo2020localization}, and our main result allows us to conclude that joint localization holds for GRW polymers for all temperature values in dimension $1+1$.

Besides the localization for individual polymers, an additional assumption we need is closely related to the requirement that the ratio of certain point-to-line partition functions remains bounded by a random constant not depending on the polymer length.
A stronger form of this property was established for
GRW polymers in~\cite{Bakhtin-Li:MR3911894}, where convergence of these ratios to finite positive numbers was established. The limits of those ratios or their logarithms can be viewed as Busemann functions, and their existence has been also established for exactly solvable models of log-gamma polymer in \cite{Georgiou--Rassoul-Agha--Seppalainen--Yilmaz:MR3395462} and O'Connell--Yor polymer 
in~\cite{Alberts--Rassoul-Agha--Simper:MR4091097}. For general SSRW models, convergence of partition function ratios (i.e., well-definedness of Busemann functions)
is known only conditionally, see~\cite{Janjigian--Rassoul-Agha:MR4089495}, but the boundedness of these ratios can be derived from the latter paper.

A first indication that such a result could be true is a theorem from~\cite{Bakhtin-Li:MR3911894} stating convergence to zero (as $n\to\infty$) of the total variation distance between the $n$-th marginals of the infinite-volume polymer measures
with the same asymptotic slope but with different endpoints.

\textbf{Acknowledgements.} We are grateful to Firas Rassoul-Agha for pointing out that our condition on the boundedness of partition functions can be easily derived from \cite{Janjigian--Rassoul-Agha:MR4089495}. YB thanks NSF for partial support via grant DMS-1811444.

\section{The setting and statements of main results}\label{Setting_MainResults}

 We will consider two main cases, lattice polymers and polymers in continuous space. To unify the notation, we define the space $\X$ to be either $\Z$ or $\R$.  In both cases, $\X$ is a group with respect to the usual addition. The role of space-time for our polymers in $1+1$ dimension is played by $\Z\times\X$ ($\X$ being the space and $\Z$ being the time).

Let $\lambda$ be a probability measure on $(\X,\mathcal{B}),$ where $\mathcal{B}$ is the Borel $\sigma$-algebra on~$\X$. To simplify the presentation we assume that $\lambda$ is not equal to a Dirac mass $\delta_x$ for any $x\in \X$. For $a\in\X$ and an i.i.d.\ sequence of random variables $(\xi_i)_{i\in \N}$ with common distribution $\lambda$ on $\X$, we define the partial sums $S_0=a$,
$S_k = a+\sum_{i = 1}^{k}\xi_i$ for $k\ge 1$, and denote the distribution of $(S_0,\dots, S_{n-1},S_n)$ on $\X^{n+1}$ by $\mathrm{P}_a^n$. Equivalently, 

\[ \mathrm{P}_a^n(dx_0,\dots, d x_{n}) = \delta_a(dx_0) \lambda(d(x_1-x_0))\cdots \lambda(d(x_{n} - x_{n-1})). \]
For integers $m < n$ we use the notation $\mathrm{P}_a^{m,n}:= \mathrm{P}_a^{n-m}$. In this paper, we consider probability measures $\lambda$ of the form $\lambda(dx)=\stepdensity(x)\gamma(dx) $, where $\gamma$ is either Lebesgue measure on $\X=\R$ or the counting measure on $\X=\Z$ and $\stepdensity$ is a probability density, i.e., a measurable nonnegative function such that $\int_\X \stepdensity(x)\gamma(dx)=1$.

We state our main results  (see \Cref{thm:generalThm}) imposing extra conditions on the distribution $\lambda$.
Although we believe that these conditions hold true for a large class of models, in this paper, we verify them only for two specific examples: the Gaussian random walk,
where  $\lambda(dx) = g(x)dx$ with 
\begin{equation}
\label{eq:standard_gaussian_d}
g(x) = \frac{1}{\sqrt{2\pi}}e^{-x^2/2},
\end{equation}
 and the symmetric simple random walk, where  $\lambda(dx) = \frac{1}{2}\delta_{-1} + \frac{1}{2}\delta_1$, see our Assumptions~\ref{continuousEnvironmentAssumptions} and~\ref{assumptions2} below.

The $(1+1)$-dimensional polymer measures that we are concerned with are obtained as a result of interaction of the above random walks  with the environment potential, a measurable function $F:\Z\times\X\to \R$ which can be viewed as a collection $F=(F_k)_{k\in\Z}$, where $F_k:\X\to\R$, $k\in\Z$.

We denote the point-to-line directed polymer measure at temperature $T=1/\beta\in(0,\infty)$, in potential $F$, started at $(m,a)\in \Z\times \X$ and ending at time $n>m$, by $\mu_{a,\beta}^{m,n}$. 
 The measure~$\mu_{a,\beta}^{m,n}$ is a Gibbs distribution. It is defined to be the probability measure on $\X^{n-m+1}$ satisfying
\begin{equation}
\label{eq:p2l-measure}
\mu_{a,\beta}^{m,n}(d x_m,\dots, d x_{n}) 
= \frac{1}{Z_{a,\beta}^{m,n}} e^{-\beta \sum_{k = m}^{n-1} F_k(x_k)} \mathrm{P}_{a}^{m,n} (dx_m,\dots, d x_{n}),
\end{equation}
where the normalizing constant $Z_{a,\beta}^{m,n}$ called the point-to-line partition function is defined by
\begin{equation}
\label{eq:p2l-Z}
Z_{a,\beta}^{m,n} = \int_{\X^{n-m+1}}e^{-\beta \sum_{k = m}^{n-1} F_k(x_k)} \mathrm{P}_{a}^{m,n} (dx_m,\dots, dx_{n}).
\end{equation}
We note that \eqref{eq:p2l-measure}  defines a probability measure $\mu_{a,\beta}^{m,n}$ whenever $Z_{a,\beta}^{m,n}<\infty$.
 
Since in this paper we only consider $1+1$ dimension, the role of $\beta$ is inconsequential, as the very strong disorder regime holds for all positive $\beta$ and environments satisfying certain mild conditions (see \cite{CV06} for the discrete case and Section~\ref{veryStrongDisorderSection} for the continuous case). Because of this, we can omit the constant $\beta$ from our notation and absorb it into the environment $F$. We also use $Z^n_a=Z^{0,n}_a$,  $\mu^n_a=\mu^{0,n}_a$, $Z^n=Z^n_0$, $\mu^n=\mu^n_0$ for brevity.

The polymer measure $\mu_{a}^{m,n}$ can be viewed a Gibbs measure with reference measure  $\delta_a\times \gamma^{n-m}$ and Hamiltonian 
\[
\mathcal{H}^{m,n}_{x_m}(x_{m+1},\dots,x_{n-1}) = \sum_{k = m}^{n-1} \Big[F_k(x_k) + V(x_{k+1} - x_k)\Big],
\]
where
\begin{equation}
    V(x) := -\log \stepdensity(x).\label{potentialDef}
\end{equation}
Under this interpretation, $V(x_{k+1}-x_k)$ plays the role of the energy of nearest neighbor self-interaction of the polymer chain. For the Gaussian density of the random walk step 
given in~\eqref{eq:standard_gaussian_d}, the energy $V$ is quadratic.

We assume we are given a probability space $(\Omega, \mathcal{F},\Prb)$, where $\Omega$ is the space of continuous functions $F:\Z\times \X\to \R$ 
(the continuity requirement is superfluous if $\X=\Z$) 
endowed with local uniform topology and $\mathcal{F}$ is the completion of the Borel $\sigma$-algebra with respect to $\Prb$.  

We usually write the time argument of $F$  as a subscript obtaining $F_k:\X\to \R$, $k\in\Z$. We introduce the space-time shifts $(\theta^{n,x})_{n\in \Z, x\in \X}$ acting on $\Omega$, defined by $\theta^{n,x}F_k(y) = F_{k+n}(y+x)$, and we assume that these space-time shifts preserve $\Prb$ so that $F$ is space-time stationary. We take the collection $(F_k)_{k\in \Z}$ to be independent. 
We also assume throughout this paper that $F$ is not deterministic (a deterministic stationary potential $F$ is a constant, so in this case the polymer measures coincide with
the reference random walks and localization does not hold).

We are studying directed polymer measures $\mu_{a}^{m,n}$ in environment $F$. Though~$\mu_{a}^{m,n}$ depends on the realization of the environment, this dependence will be omitted from the notation for brevity. 

We are interested the joint behavior of polymer measures $\mu_{a}^{m,n}$ with varying  $a,m,$ and $n$.
To ensure that they all are well-defined probability measures for almost every realization of $F$, we will need to check that 
\begin{equation}
    \Prb(\Ofinite) = 1\label{almostSureOFinite}
\end{equation}
where
\begin{equation}
    \Ofinite := \bigcap_{\substack{m,n\in \Z\\m<n}} \{Z_a^{m,n}< \infty\ \text{for all\ } a\in\X \}.\label{defOfOfinite}
\end{equation}
We discuss this condition in Section~\ref{almostSureFinitenessSection}. In particular, \Cref{almostSureFinitenessCorollary} implies that~\eqref{almostSureOFinite} holds in the concrete cases that we consider.

Some of our results require additional sets of conditions that we collect together as Assumption~\ref{continuousEnvironmentAssumptions} (for the continuous space case) and Assumption~\ref{assumptions2} (for the lattice case).
In part these assumptions are carried over from \cite{Bakhtin-Li:MR3911894,bakhtin-seo2020localization,Janjigian--Rassoul-Agha:MR4089495,Bates-Chatterje:MR4089496} to ensure that the main results from these papers are applicable. More precisely,
\Cref{continuousEnvironmentAssumptions} fulfills requirements on the environment stated in \cite{Bakhtin-Li:MR3911894} and \cite{bakhtin-seo2020localization} on the continuous space case. In this paper, we need an additional assumption of positive correlation, as well as a slightly more restrictive exponential moment condition than in \cite{Bakhtin-Li:MR3911894} and \cite{bakhtin-seo2020localization}. \Cref{assumptions2} is a combination of requirements in \cite{Janjigian--Rassoul-Agha:MR4089495} and \cite{Bates-Chatterje:MR4089496} in the lattice case. 

\begin{assumption}[Continuous Environment Case]\label{continuousEnvironmentAssumptions}
    Here $\X=\R$, $\gamma(dx)$ is Lebesgue measure, and $\lambda$ is absolutely continuous with respect to Lebesgue measure. The requirements on the environment are the following.
    \begin{assumpRequirements}
        \item For all $\alpha\in[-2,3]$, $\E[e^{\alpha F_0(0)}] < \infty$.  In addition, there exists $\eta > 0$ such that $\E[e^{\eta F^*_0(0)}] < \infty$, where $F^*_k(x) = \sup\{F_k(y)\,:\,y\in [x,x+1]\}$.\label{exponentialMomentAssumption}
        \item Positive correlation, i.e., for all $x\in \R,$
        \[ \E\left[ \left(e^{ -F_0(0)} - e^{\logExp}\right)\left(e^{ -F_0(x)} - e^{\logExp}\right)\right] \ge 0\]
        where $\logExp := \log \E[e^{-F_0(0)}].$
        \item $\Prb$-almost surely, $F_k\in C^1(\R)$ for all $k$.
        \item $F_0$ is $M$-dependent for some $M>0$, i.e., for all  $a\in\R$, the collection of random variables $(F_0(x))_{x< a}$ is independent of $(F_0(x))_{x>a+M}$.
    \end{assumpRequirements}
\end{assumption}

Natural examples of a field $F$ satisfying the above conditions are: (i) a Poisson field on $\Z\times\R$ mollified in the spatial variable by a compactly supported nonnegative $C^1$ function;
(ii) a smooth Gaussian field with positive covariance, with finite dependence range in space and i.i.d.\ in time.

\begin{assumption}[SSRW Case]\label{assumptions2}
Here $\X=\Z$ and $\lambda(dx) = \frac{1}{2}\delta_{-1} + \frac{1}{2}\delta_1$. The requirements on the environment are the following.
\begin{assumpRequirements}
\item For all
 $\alpha \in [-2,2]$, $\E\left[ e^{\alpha F_0(0)} \right]< \infty.$
\item The collection $(F_0(x))_{x\in \Z}$ is i.i.d.
\end{assumpRequirements}
\end{assumption}

Now we will define precisely the concepts of localization and joint localization that we will consider. The following definition from~\cite{bakhtin-seo2020localization} is a generalization of the definition for lattice measures given in~\cite{Bates-Chatterje:MR4089496}. For brevity,  we replace the term  {\it geometric localization} used in those papers by {\it localization}. For $x\in\X$, $K>0$, we denote $B_K(x)=\{y:|y-x|\le K\}$, the closed ball of radius~$K$ centered at $x\in \X.$

\begin{definition}\label{def:localiz}
Localization with parameters $(\delta, K, \theta)$ holds for a sequence $(\nu^n)_{n\in \N}$ of probability measures on $\R$  if
\[\liminf_{n\to \infty}\frac{1}{n}\sum_{k = 0}^{n-1}\1\Big\{\sup_{x\in \R}\nu^k (B_K(x)) > 1 - \delta\Big\} \ge \theta.\]
\end{definition}
In other words, for every $k$ from a set of natural numbers of density at least~$\theta$, the measure $\nu^k$ assigns mass at least $1-\delta$  to some region of fixed size $2K$. In this paper, we study a related notion of joint localization defined as follows.
\begin{definition}\label{def:joint-localiz} 
Let $A\subset \X$ be any set.
Joint
localization with parameters $(\delta,K,\theta)$ holds for a family of probability measures $(\nu_a^n)_{n\in \N, a\in A}$  if
\[\liminf_{n\to \infty}\frac{1}{n}\sum_{k = 0}^{n-1}\1\Big\{\sup_{x\in \R}\inf_{a\in A}\nu_a^k(B_K(x)) > 1 - \delta\Big\} \ge \theta.\]
\end{definition}
If $A$ contains more than one point, then this definition
strengthens Definition~\ref{def:localiz} on localization of individual measures and requires 
a uniformly heavy region $B_K(x)$ of fixed size $2K$ to exist and to serve all the measures $\nu_a^k$, $a\in A$, at the same time, i.e., all these measures get localized to the same region.

The main results of this paper give sufficient conditions for joint localization to hold for endpoint distributions $\rho^n_a$ of random polymer measures $\mu^{n}_a$   
defined by 
\begin{equation}
\label{eq:endpoint-distr}
\rho^n_a=\mu^{n}_a\pi^{-1}_n,\quad n\in\N.
\end{equation}
Here and throughout the paper,  $\pi_n x$, $n\in\Z$ denotes the  $n$-th coordinate of a vector (or path) $x$.

Our analysis of point-to-line polymer measures relies on analysis of the point-to-point polymer measures. To define the point-to-point polymer measure we must define the point-to-point reference walk distribution. For $a,u\in \X$ and $n\in \N$, the random walk $\mathrm{P}_{a,u}^{n}$ measure between points $(a,0)$ and $(u,n)$ is defined by
\[ \mathrm{P}_{a,u}^{n}(dx_0,\dots, d x_{n}) = \delta_a(dx_0) \lambda(d(x_1-x_0))\cdots \lambda(d(x_{n} - x_{n-1}))\delta_u(dx_n).\]
The measure $\mathrm{P}_{a,u}^n$ has density with respect to $\delta_{a}\otimes \gamma^{\otimes (n-1)}\otimes \delta_u$ given by
\[
\frac{d \mathrm{P}_{a,u}^{n}}{ d (\delta_{a}\otimes \gamma^{\otimes (n-1)}\otimes \delta_u)}(x_0,\dots, x_{n})=
\prod_{i = 0}^{n-1} \stepdensity(x_{k+1} - x_k).
\]
Note that under our definition $\mathrm{P}_{a,u}^{n}$ is not a probability measure (indeed it may be zero if $(a,0)$ and $(u,n)$ are not connected by a random walk with steps $\lambda$). We use the notation $\mathrm{P}_{a,u}^{m,n} := \mathrm{P}_{a,u}^{n-m}.$ 
We can define the point-to-point polymer measure as 
\begin{equation}
\label{eq:p2p-measure}
\mu_{a,u}^{m,n}(d x_m,\dots, d x_{n}) = \frac{1}{Z_{a,u}^{m,n}} e^{- \sum_{k = m}^{n-1} F_k(x_k)} \mathrm{P}_{a,u}^{m,n} (dx_m,\dots, d x_{n})\end{equation}
where $Z_{a,u}^{m,n}$ is the point-to-point partition function (normalizing factor): 
\begin{align}\label{eq:p2pPartitionFunctionDef}
Z_{a,u}^{m,n} &= \int_{\R^{n-m+1}}e^{- \sum_{k = m}^{n-1} F_k(x_k)} \mathrm{P}_{a,u}^{m,n} (dx_m,\dots, d x_{n}) \\
&= \int_{\R^{n-m+1}}\prod_{k=m}^{n-1}e^{- F_k(x_k)} \stepdensity(x_{k+1}- x_k)(\delta_a\otimes \gamma^{\otimes (n-m-1)}\otimes \delta_u)(dx).
\notag
\end{align}
Definition~\eqref{eq:p2p-measure}  makes sense only if $Z_{a,u}^{m,n}$ is positive and  finite.
If $Z_{a,u}^{m,n} = 0$ then we define $\mu_{a,u}^{m,n}~:=~\bb{0}$ where $\bb{0}$ is the zero measure.
Finiteness of the point-to-point partition functions  is discussed  in Section~\ref{almostSureFinitenessSection}. In particular, \Cref{almostSureFinitenessCorollary} implies that
in the cases we consider, 
\begin{equation}
    \Prb(\OfinitePoint) = 1,\label{AlmostSurep2pFiniteness}
\end{equation}
where 
\begin{equation}
    \OfinitePoint = 
     \bigcap_{\substack{m,n\in \Z\\m<n} } \Big\{Z_{a,u}^{m,n} < \infty\text{ for all }a,u\in \X \Big\}.
\end{equation}
We will work on the set
\[
\OfiniteBoth = \Ofinite \cap \OfinitePoint.
\]
We have $\Prb(\OfiniteBoth)=1$ due to \eqref{almostSureOFinite}
and \eqref{AlmostSurep2pFiniteness}.

We will often use the notation $Z_{a,u}^{n}=Z_{a,u}^{0,n}$ and $\mu_{a,u}^{n}=\mu_{a,u}^{0,n}$.
Let us extend~\eqref{eq:endpoint-distr} and define endpoint distributions for polymers started at a general time:
\begin{equation}
\label{eq:endpoint-distr-any-m}
\rho^{m,n}_a=\mu^{m,n}_a\pi^{-1}_n,\quad m,n\in\Z,\ n>m.
\end{equation}
Using  \eqref{eq:p2l-measure}, \eqref{eq:p2l-Z}, \eqref{eq:p2pPartitionFunctionDef}, \eqref{eq:endpoint-distr-any-m}, we obtain
\begin{equation}
    Z_{a}^{m,n}=\int_{\R}Z_{a,u}^{m,n}\gamma(du)\label{p2l_p2pIntegration},
\end{equation}
so
\[
\rho_a^{m,n}(A)=\frac{1}{Z_a^{m,n}}\ \int_A Z_{a,u}^{m,n}\gamma(du),
\]
or, equivalently,
\begin{equation}
\label{eq:end-density}
\rho_a^{m,n}(du) = \frac{Z_{a,u}^{m,n}}{Z_a^{m,n}}\gamma(du).
\end{equation}

\bigskip

We are ready to state our main general result providing sufficient conditions for joint 
localization. We believe that these conditions apply to a broad class of $1+1$ polymer models. 
We are able to check that they actually hold true in two situations: under \Cref{continuousEnvironmentAssumptions} when $\stepdensity=g$ and under \Cref{assumptions2}. We state the resulting corollaries on localization
for GRW polymers and for SSRW polymers as Theorems~\ref{thm:mainThm1} and~\ref{thm:mainThm2}. 

Let $G\subset \X$ be a subset unbounded in both positive and negative directions. In practical applications, $G$ will be taken to be an additive subgroup of $\X$ associated to the random walk measure ($\R$ in the continuous case and $2\Z$ in the simple random walk case). In essence, $G$ is the subset of $\X$ where random walks started at two points $a,b\in G$ interact with the same environment. 
\begin{theorem}\label{thm:generalThm} 
Let $A$ be a bounded subset of $G$. Suppose that for some realization of the potential~$F\in \OfiniteBoth$, the following holds.
\begin{requirements}
\item\label{onePointLocalizationReq} For every $\delta > 0$ and every $m\in \Z,$ there are $K,\theta > 0$ such that 
localization with parameters $(\delta, K,\theta)$ holds for 
the sequence $(\rho_{0}^{m,n})_{n=m}^\infty$. 
\item\label{partitionRatioReq} For every $a,b\in G$ satisfying $a\le b$, every $n\in \N$, and $\gamma^{\otimes 2}$-a.e.\ $(u,v)\in\X^2$ satisfying $u\le v$,
\begin{equation}
\label{eq:Z-log-concavity}
Z_{a,u}^n Z_{b,v}^n \ge Z_{a,v}^n Z_{b,u}^n.
\end{equation}
\item \label{positivityMarginalReq} For every $r>0$, there is an integer $m < 0$ such that
\begin{equation}
\label{eq:mass_below_minus}
\liminf_{n\to \infty} (\mu_{0}^{m,n} \pi_0^{-1})((-\infty,-r)\cap G) > 0
\end{equation}
and
\begin{equation}
\label{eq:mass_above_plus}
\liminf_{n\to \infty} (\mu_{0}^{m,n} \pi_0^{-1})((r,\infty)\cap G) > 0.
\end{equation}
\end{requirements}
Then for every $\delta > 0,$ there are $K,\theta > 0$ 
such that 
joint 
localization with parameters $(\delta,K,\theta)$  holds for 
$(\rho_{a}^n)_{n\in \N, a\in A}$.
\end{theorem}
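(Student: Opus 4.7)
The plan is to derive joint localization from the three hypotheses by exploiting an FKG-type stochastic ordering (coming from \ref{partitionRatioReq}), a Markov mixture decomposition, and a sandwich argument (using \ref{onePointLocalizationReq} and \ref{positivityMarginalReq}).

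First I would observe that \ref{partitionRatioReq} says exactly that $u \mapsto Z_{a,u}^n/Z_{b,u}^n$ is nonincreasing on $\X$ whenever $a \le b$ in $G$. Since the density of the endpoint distribution with respect to $\gamma$ is $u \mapsto Z_{a,u}^n/Z_a^n$, this is a monotone likelihood ratio property and hence implies the stochastic ordering $\rho_a^n \preceq \rho_b^n$, i.e., $\rho_a^n((-\infty, t]) \ge \rho_b^n((-\infty, t])$ for all $t\in \R$. Next, by the Markov property of the polymer measure $\mu_0^{m,n}$ disintegrated along the time-$0$ coordinate,
\[\rho_0^{m,n}(\cdot) = \int_\X \rho_a^n(\cdot) \, d\nu_{m,n}(a), \qquad \nu_{m,n} := \mu_0^{m,n}\pi_0^{-1},\]
so the point-to-line endpoint distribution started in the past is a mixture of time-$0$-started endpoint distributions, weighted by the time-$0$ marginal of the past polymer.

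Now fix $\delta > 0$ and choose $r > 0$ with $A \subset [-r, r]$. By \ref{positivityMarginalReq}, pick $m < 0$ and $\epsilon > 0$ so that, for all sufficiently large $n$, both $\nu_{m,n}((-\infty, -r) \cap G) \ge \epsilon$ and $\nu_{m,n}((r, \infty) \cap G) \ge \epsilon$. Set $\delta' = \delta\epsilon/8$ and apply \ref{onePointLocalizationReq} with $\delta'$ and this $m$ to produce $K, \theta > 0$ such that, for a set of $k\in\N$ of lower density at least $\theta$, some $x_k^*\in\R$ satisfies $\rho_0^{m,k}(B_K(x_k^*)) > 1 - \delta'$. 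For such $k$, the mixture decomposition yields $\int \rho_a^k(B_K(x_k^*)) \, d\nu_{m,k}(a) > 1 - \delta'$, and Markov's inequality bounds the $\nu_{m,k}$-mass of $\{a : \rho_a^k(B_K(x_k^*)) \le 1 - \delta/4\}$ by $4\delta'/\delta = \epsilon/2$. Hence the "good" set $\{a : \rho_a^k(B_K(x_k^*)) > 1 - \delta/4\}$ has $\nu_{m,k}$-mass at least $1 - \epsilon/2$ and therefore intersects both $(-\infty, -r) \cap G$ and $(r, \infty) \cap G$; fix $a_-$ and $a_+$ in these intersections.

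Finally, for arbitrary $a \in A$ we have $a_-, a, a_+ \in G$ with $a_- \le a \le a_+$, so the stochastic ordering from the first step gives
\[\rho_a^k((-\infty, x_k^* - K]) \le \rho_{a_-}^k((-\infty, x_k^* - K]) < \delta/4, \qquad \rho_a^k((x_k^* + K, \infty)) \le \rho_{a_+}^k((x_k^* + K, \infty)) < \delta/4,\]
yielding $\rho_a^k(B_K(x_k^*)) > 1 - \delta/2$ uniformly in $a \in A$ and hence joint localization with parameters $(\delta, K, \theta)$. The main obstacle I anticipate is bridging the averaging argument, which only produces $\nu_{m,k}$-\emph{typical} good starting points, to uniform control over every $a \in A$: the sandwich step using stochastic monotonicity and endpoints $a_\pm \in G$ lying outside the bounded set $A$ is what closes this gap, and the role of \ref{positivityMarginalReq} is precisely to guarantee that such $a_\pm$ exist in the support of the mixing measure.
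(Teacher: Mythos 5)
Your proposal is correct and takes essentially the same route as the paper. The three ingredients you identify—translating \Cref{partitionRatioReq} into the stochastic ordering $\rho^n_a\preceq\rho^n_b$ via the monotone-likelihood-ratio observation, disintegrating $\rho_0^{m,n}$ as the mixture $\int\rho^n_a\,d\nu_{m,n}(a)$ with $\nu_{m,n}=\mu_0^{m,n}\pi_0^{-1}$, and then using a mass (Markov) argument together with \Cref{positivityMarginalReq} to produce points $a_-<-r$ and $a_+>r$ in $G$ at which the endpoint distribution is already localized—are precisely the content of the paper's Lemma on monotonicity, its disintegration lemma, and the deterministic abstract proposition from which Theorem~\ref{thm:generalThm} is deduced. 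The only organizational difference is that the paper factors the argument through a standalone proposition stated for abstract kernels $\kappa^n_x$ and mixing measures $\chi^n$, whereas you carry out the same estimates inline; your explicit Markov's inequality is the same as the paper's direct mass bound on $\chi^n(G\setminus R)$. Your choice of constant $\delta'=\delta\epsilon/8$ differs cosmetically from the paper's $\delta'=\frac{\delta}{3}\min(\eta_-,\eta_+)$ but serves the identical purpose, and your final sandwich via $a_-\le a\le a_+$ and the ordering $\preceq$ is exactly the paper's concluding step.
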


\Cref{onePointLocalizationReq} is the obvious necessary condition that single polymer localization should hold if we want joint localization to hold. 

\Cref{partitionRatioReq} can be interpreted as a path crossing inequality for polymers. The function $-\log Z_{a,u}^n$ is the free energy associated to traveling from $a$ to $u$ in $n$ units of time. 
Relation \eqref{eq:Z-log-concavity} means that the sum of the energies associated to paths between $a$ and $u$ and between $b$ and $v$ is less than the sum of the energies associated to paths between $a$ and $v$ and between $b$ and $u$. For the zero temperature last passage percolation
case, the analogous property for geodesics becomes the path crossing lemma (see Lemma B.2 in \cite{BB021}) and it is related to the cutting corner lemma for Lagrangian minimizers (see Fact 2 and Lemma 3.2 in \cite{ekms:MR1779561}).

We are able to verify \Cref{partitionRatioReq} when the reference measure is log-concave and when the reference measure is a simple random walk.

\Cref{positivityMarginalReq} is an assumption of the non-degeneracy of the marginals of the polymer measure as $n\to \infty.$
It is tightly related to boundedness of partition function ratios $Z_x^{0,n}/Z_{y}^{0,n}$ from infinity and zero as $n\to\infty$. For GRW polymers a stronger condition actually holds: the limits of these ratios are well-defined, positive, and can be interpreted in terms of infinite volume polymer measures, see the discussion in Section~\ref{sec:proof_under_A} and, in particular, Theorem~\ref{thm:partitionRatiosGaussian}. 

We are able to check Requirements \ref{onePointLocalizationReq} and \ref{partitionRatioReq} of \Cref{thm:generalThm} assuming that the random walk density is log-concave, and so we derive the following corollary of \Cref{thm:generalThm}. We call the density $\stepdensity:\R\to [0,\infty)$ log-concave if the energy function $V:\R\to (-\infty,\infty]$
given by~\eqref{potentialDef} is convex. Some well-known examples of log-concave densities are Gaussian densities ($\stepdensity(x)\propto e^{-b (x-a)^2}$), Laplace densities ($\stepdensity(x) \propto e^{-b|x-a|}$), and uniform densities ($\stepdensity(x) \propto \1_{[a,b]}(x)$).  
\begin{corollary}\label{thm:logConcaveCorollary}
    Suppose $\lambda$ has a log-concave density with respect to Lebesgue measure and that the environment satisfies \Cref{continuousEnvironmentAssumptions}. Also, suppose \Cref{positivityMarginalReq} of \Cref{thm:generalThm}
    holds $\Prb$-almost surely for $G=\R.$ Let $\delta > 0$ and $a<b.$ Then, for $\Prb$-almost every realization of~$F$,
    there are $K,\theta>0$ (depending on $F$) such that 
    joint  
    localization with parameters $(\delta, K, \theta)$ holds for the family $(\rho_x^n)_{n\in \N,x\in [a,b]}$.
\end{corollary}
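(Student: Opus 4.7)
The plan is to apply \Cref{thm:generalThm} with $G=\R$ and $A=[a,b]$. Since \Cref{positivityMarginalReq} is assumed, it suffices to verify \Cref{onePointLocalizationReq} and \Cref{partitionRatioReq} on a set of full $\Prb$-measure.

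For \Cref{onePointLocalizationReq}, I combine the very strong disorder result for continuous-space polymers established in Section~\ref{veryStrongDisorderSection} (valid under \Cref{continuousEnvironmentAssumptions}) with the equivalence between very strong disorder and geometric localization proved for continuous-space polymers in~\cite{bakhtin-seo2020localization}. This gives $\Prb$-a.s.\ localization of $(\rho_0^{0,n})_{n\in\N}$ in the sense of \Cref{def:localiz}. Time-stationarity of $\Prb$ under the shifts $\theta^{m,0}$ and a countable union over $m\in\Z$ upgrade this to $\Prb$-a.s.\ localization of $(\rho_0^{m,n})_{n>m}$ for every $m\in\Z$ simultaneously.

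For \Cref{partitionRatioReq}, I prove \eqref{eq:Z-log-concavity} deterministically on $\OfiniteBoth$ by a path-swapping argument. Fix $a\le b$, $u\le v$, and $n\in\N$. Write $Z_{a,v}^n Z_{b,u}^n$ as a double integral over pairs of paths $\pi^{(1)},\pi^{(2)}\in\R^{n+1}$ with $\pi^{(1)}_0=a$, $\pi^{(1)}_n=v$, $\pi^{(2)}_0=b$, $\pi^{(2)}_n=u$. Since $\pi^{(1)}_0-\pi^{(2)}_0\le 0$ and $\pi^{(1)}_n-\pi^{(2)}_n\ge 0$, there is a smallest index $k_0$ with $\pi^{(1)}_{k_0}\le\pi^{(2)}_{k_0}$ and $\pi^{(1)}_{k_0+1}\ge\pi^{(2)}_{k_0+1}$. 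Swapping the suffixes after $k_0$ produces a pair $(\tilde\pi^{(1)},\tilde\pi^{(2)})$ of paths from $a$ to $u$ and from $b$ to $v$; the set of visited space-time points is preserved, so the environment factor $\exp(-\sum_k F_k(\cdot))$ is unchanged, and only the step-density product at step $k_0$ changes---from $\stepdensity(s_1)\stepdensity(s_2)$ to $\stepdensity(s_3)\stepdensity(s_4)$ where
\begin{align*}
s_1 &= \pi^{(1)}_{k_0+1}-\pi^{(1)}_{k_0}, & s_2 &= \pi^{(2)}_{k_0+1}-\pi^{(2)}_{k_0},\\
s_3 &= \pi^{(2)}_{k_0+1}-\pi^{(1)}_{k_0}, & s_4 &= \pi^{(1)}_{k_0+1}-\pi^{(2)}_{k_0}.
\end{align*}
The crossing condition yields $s_2\le s_3, s_4\le s_1$ together with $s_1+s_2=s_3+s_4$, so $(s_1,s_2)$ majorizes $(s_3,s_4)$ and log-concavity of $\stepdensity$ gives $\stepdensity(s_3)\stepdensity(s_4)\ge\stepdensity(s_1)\stepdensity(s_2)$. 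The swap-at-$k_0$ rule is a measure-preserving bijection from the pairs contributing to $Z_{a,v}^n Z_{b,u}^n$ onto a subset of those contributing to $Z_{a,u}^n Z_{b,v}^n$, so integration yields~\eqref{eq:Z-log-concavity}.

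With \Cref{onePointLocalizationReq}, \Cref{partitionRatioReq}, and \Cref{positivityMarginalReq} all verified on a single $\Prb$-full event, \Cref{thm:generalThm} directly gives joint localization of $(\rho_x^n)_{n\in\N,x\in[a,b]}$. The main subtlety is justifying the bijection: one needs to check that applying the ``swap at first crossing'' rule to the image pair returns the original configuration (so the map is invertible, not merely measure-decreasing), and to dispose of $\gamma^{\otimes 2}$-null sets of coincidences where the strict crossing structure degenerates.
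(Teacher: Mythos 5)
Your proof of \Cref{onePointLocalizationReq} is essentially the paper's. One small omission: \Cref{thm:veryStrongDisorderTheorem} requires both \Cref{continuousEnvironmentAssumptions} and \Cref{densityAssumptions}, so before invoking it you must first note that a log-concave $\stepdensity$ satisfies \Cref{densityAssumptions} -- this is exactly \Cref{logConcaveSatisfiesAssumptions}, and the paper's proof of the corollary makes this step explicit. The time-shift upgrade to all $m\in\Z$ is fine.

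The treatment of \Cref{partitionRatioReq} is where the real difference -- and a genuine gap -- lies. Your path-swapping map $\Phi$ (swap suffixes at the first crossing $k_0$) is pointwise correct: the environment factor is preserved, and your majorization computation shows $\stepdensity(s_3)\stepdensity(s_4)\ge\stepdensity(s_1)\stepdensity(s_2)$ by convexity of $V$. But the integrated inequality \eqref{eq:Z-log-concavity} requires $\Phi$ to be a measure-preserving \emph{injection} from $(a\to v,\,b\to u)$ pairs into $(a\to u,\,b\to v)$ pairs, and your $\Phi$ is not obviously injective. Note that $\Phi$ deletes the crossing at $k_0$ from the crossing set (the image pair has $\tilde\pi^{(1)}_{k_0+1}\le\tilde\pi^{(2)}_{k_0+1}$, so $k_0$ is no longer a crossing), yet the image's \emph{first} crossing is the original's \emph{second} crossing. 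Since the image shares no canonical marker of $k_0$, two distinct pre-images with different first-crossing indices could in principle map to the same image, so injectivity must be argued and the decomposition by $k_0$ must be shown to land in disjoint pieces of the target space. You correctly flag this as ``the main subtlety,'' but do not resolve it, and resolving it is the heart of the matter: on the continuous line one cannot fall back on lattice touching points the way the usual crossing lemma for lattice last-passage percolation does.

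The paper takes a structurally different route to \Cref{partitionRatioReq} that sidesteps this issue entirely. It first proves a two-step ``TP2'' inequality $\stepdensity(z-x)\stepdensity(z'-x')\ge\stepdensity(z'-x)\stepdensity(z-x')$ for $x\le x'$, $z\le z'$ (a direct convexity-of-difference-quotients computation, \eqref{stepDensityPathCrossing}). From this it bootstraps, by induction on time, a stochastic-dominance statement $\mu_{a,u}^n\preceq\mu_{a,v}^n$ for $u\le v$ (\Cref{stochasticDominanceLogConcave}), and finally deduces \eqref{eq:Z-log-concavity} by disintegrating $Z_{a,u}^n/Z_{b,u}^n$ against $\mu_{b,u}^n\pi_1^{-1}$ and observing that $y\mapsto\stepdensity(y-a)/\stepdensity(y-b)$ is monotone (\eqref{ratioOfPartitionFunctionTrick}). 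This avoids any global path-swapping combinatorics: only a one-step pointwise inequality plus monotone-function arguments are needed. If you want to keep your combinatorial approach, you would need to either establish injectivity of the first-crossing swap directly, or restructure the argument (for example via the transfer-operator total-positivity composition formula, which is what the paper's lemma is doing by hand).
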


We establish \Cref{positivityMarginalReq}  of \Cref{thm:generalThm} for GRW polymers, implying the following unconditional result.

\begin{theorem}\label{thm:mainThm1}
    Suppose \Cref{continuousEnvironmentAssumptions} holds and $\stepdensity=g$, the standard Gaussian density given in \eqref{eq:standard_gaussian_d}. Let $\delta > 0$ and let $a<b$.  Then, for $\Prb$-almost every realization of~$F$,
    there are $K,\theta>0$ (depending on $F$) such that 
    joint  
    localization with parameters $(\delta, K, \theta)$ holds for the family $(\rho_x^n)_{n\in \N,x\in [a,b]}$.
\end{theorem}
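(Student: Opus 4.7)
Since the standard Gaussian density $g$ is log-concave, Corollary~\ref{thm:logConcaveCorollary} reduces the task to verifying Requirement~\ref{positivityMarginalReq} of Theorem~\ref{thm:generalThm} (with $G=\R$) $\Prb$-almost surely. Concretely, given $r>0$, I need to exhibit $m<0$ such that the time-$0$ marginal of $\mu_0^{m,n}$ carries mass bounded away from zero on each of $(-\infty,-r)$ and $(r,\infty)$ as $n\to\infty$.

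The natural starting point is the splitting identity at time $0$, obtained from \eqref{p2l_p2pIntegration},
\[
(\mu_0^{m,n}\pi_0^{-1})(du)
=\frac{Z_{0,u}^{m,0}\,\bigl(Z_u^{0,n}/Z_0^{0,n}\bigr)}{\int_\R Z_{0,v}^{m,0}\,\bigl(Z_v^{0,n}/Z_0^{0,n}\bigr)\,dv}\,du.
\]
For GRW polymers, Theorem~\ref{thm:partitionRatiosGaussian} (proved in \cite{Bakhtin-Li:MR3911894}) provides $\Prb$-a.s.\ convergence of the ratios $Z_u^{0,n}/Z_0^{0,n}$ to a strictly positive finite limit $\mathcal{B}(u)$, locally uniformly in $u$. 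Combined with the Gaussian decay in $u$ of $Z_{0,u}^{m,0}$ for fixed $m$, dominated convergence yields weak convergence $\mu_0^{m,n}\pi_0^{-1}\Rightarrow \nu^m$, where $\nu^m(du)$ is proportional to $Z_{0,u}^{m,0}\,\mathcal{B}(u)\,du$.

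It then remains to choose $|m|$ large enough that $\nu^m$ places positive mass on each of the sets $(r,\infty)$ and $(-\infty,-r)$. Write $Z_{0,u}^{m,0}$ as the centered Gaussian bridge density of variance $|m|$ multiplied by a random averaging factor $W_m(u)$ over the Gaussian bridge from $(m,0)$ to $(0,u)$. The Gaussian prefactor assigns comparable mass to $\{|u|\le r\}$ and to $\{|u|>r\}$ as soon as $|m|\gg r^2$, so what must be shown is that the random product $W_m(u)\mathcal{B}(u)$ does not collapse onto a set disjoint from one of the tails. I would derive this from the exponential moment bound in~\ref{exponentialMomentAssumption}, the positive correlation hypothesis, and the tail estimates on the Busemann process from \cite{Bakhtin-Li:MR3911894}, using the symmetry $u\mapsto -u$ of the underlying Gaussian step to treat both tails in parallel.

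The main obstacle I anticipate is exactly this last step: ruling out a pathological scenario in which, for every large $|m|$, the product $W_m(u)\mathcal{B}(u)$ concentrates on a small set and $\nu^m$ misses one of the two tails. A clean way to preclude this is a zero-one argument built on the space-time stationarity of $\Prb$: the event that $\nu^m$ fails to charge both $(r,\infty)$ and $(-\infty,-r)$ for every $m<0$ is invariant under an ergodic subfamily of the shifts~$\theta^{n,x}$, so has probability~$0$ or~$1$; the non-triviality of $\mathcal{B}$ established via Theorem~\ref{thm:partitionRatiosGaussian}, together with very strong disorder proved for GRW polymers in Section~\ref{veryStrongDisorderSection}, is what excludes the probability-$1$ case and completes the verification of Requirement~\ref{positivityMarginalReq}.
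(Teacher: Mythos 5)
Your opening is correct and mirrors the paper: Corollary~\ref{thm:logConcaveCorollary} reduces everything to verifying \Cref{positivityMarginalReq} of \Cref{thm:generalThm} with $G=\R$ almost surely, and the paper likewise resolves that requirement by appealing to the Bakhtin--Li results on Busemann-type limits for GRW polymers. Where you diverge is in which Bakhtin--Li result you invoke and in the amount of work you then think remains. Two points.

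First, the ``main obstacle'' you anticipate is illusory. Once you have (as you correctly write) that $\mu_0^{m,n}\pi_0^{-1}$ converges weakly to $\nu^m(du)\propto Z_{0,u}^{m,0}\,\mathcal{B}(u)\,du$, you are already done for \emph{every} $m<0$, because $Z_{0,u}^{m,0}>0$ for all $u$ (the Gaussian step density is strictly positive, so the point-to-point partition function never vanishes) and $\mathcal{B}(u)=v(u,0)>0$ by Theorem~\ref{thm:partitionRatiosGaussian}. Hence $\nu^m$ has an everywhere positive density, and weak convergence gives $\liminf_n(\mu_0^{m,n}\pi_0^{-1})(U)\ge\nu^m(U)>0$ for every open $U$ of positive Lebesgue measure, in particular for $(r,\infty)$ and $(-\infty,-r)$. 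You do not need $|m|\gg r^2$, you do not need to rule out any ``collapse,'' and the zero-one/ergodicity argument in your final paragraph is both unnecessary and not actually carried out (the invariance of the event you describe under the shifts $\theta^{n,x}$ would itself need a proof, since the definition involves the environment at all times, not just a shift-covariant window).

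Second, the step you treat as routine is the one that actually needs care: upgrading uniform-on-compacts convergence of $Z_u^{0,n}/Z_0^{0,n}$ to weak convergence of the time-$0$ marginal via dominated convergence requires a dominating function valid uniformly in $n$ for large $|u|$, and Theorem~\ref{thm:partitionRatiosGaussian} as stated gives tail control ($\log v(x,y)=o(|x|)$) only for the \emph{limit}, not uniformly along the sequence. This is precisely why the paper does not route through Theorem~\ref{thm:partitionRatiosGaussian} at all; it quotes the intermediate statement, Theorem~\ref{thm:positiveMassGaussian}, which directly asserts convergence of the marginals $\mu_x^{m,n}\pi_k^{-1}$ to those of a limiting measure $\mu_x^{m,\infty}$ with everywhere positive Lebesgue density. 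With that in hand, \Cref{positivityMarginalReq} holds already for $m=-1$ and any $r>0$, and the proof is two lines. So: right reduction, right circle of ideas, but you invent an obstacle where there is none while glossing over the genuine technical point, and the paper's choice of citation makes the argument immediate.
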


For SSRW polymers,
we are able to verify that \Cref{onePointLocalizationReq,partitionRatioReq,positivityMarginalReq} of \Cref{thm:generalThm} hold $\Prb$-almost surely thus obtaining
the following result.
\begin{theorem}\label{thm:mainThm2}
    Suppose \Cref{assumptions2} holds. Let $\delta > 0$ and let $A$ be a finite subset of $ 2\Z$. Then,
    for $\Prb$-almost every realization of~$F$,
    there are $K,\theta>0$ (depending on~$F$) such  that 
    localization with parameters $(\delta, K, \theta)$ holds for the family $(\rho_x^n)_{n\in \N,x\in A}$.
    \end{theorem}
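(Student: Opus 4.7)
The plan is to apply Theorem~\ref{thm:generalThm} with $G=2\Z$, which is the natural subgroup of $\X=\Z$ in which SSRW paths started at points of $2\Z$ interact with the same environment on each even time-slice. With this choice, the conclusion of Theorem~\ref{thm:generalThm} is exactly the joint localization statement claimed for the finite set $A\subset 2\Z$, so it suffices to verify that, $\Prb$-almost surely, Requirements~\ref{onePointLocalizationReq}, \ref{partitionRatioReq}, and~\ref{positivityMarginalReq} all hold.

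Requirement~\ref{onePointLocalizationReq} follows from existing lattice results. Assumption~\ref{assumptions2} satisfies the hypotheses of \cite{CV06} and \cite{Lac10}, which give very strong disorder at every positive temperature in $1+1$ dimensions; by the equivalence between very strong disorder and geometric localization proved in \cite{Bates-Chatterje:MR4089496} (and extended to the continuous setting in \cite{bakhtin-seo2020localization}), single-polymer localization holds for $(\rho_0^{0,n})_{n\in\N}$, and stationarity of $F$ in time transfers this to any initial time $m\in\Z$.

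For Requirement~\ref{partitionRatioReq}, a path-swap argument exploits the rigidity of the $\pm 1$ steps of SSRW. Let $a\le b$ and $u\le v$ be elements of $2\Z$. Given any pair of SSRW trajectories $\omega:(a,0)\to(v,n)$ and $\omega':(b,0)\to(u,n)$, the integer-valued difference $\omega'_k-\omega_k$ is nonnegative at $k=0$, nonpositive at $k=n$, and changes by $0$ or $\pm 2$ at each step, so it must vanish at some first time $\tau\in\{0,\ldots,n\}$, at which $\omega_\tau=\omega'_\tau$. Swapping the tails beyond $\tau$ produces a pair $\tilde\omega:(a,0)\to(u,n)$ and $\tilde\omega':(b,0)\to(v,n)$. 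Since $\{\omega_k,\omega'_k\}=\{\tilde\omega_k,\tilde\omega'_k\}$ as multisets for every $k$, and every $n$-step SSRW trajectory carries the same step weight $2^{-n}$, the combined product of environment and step weights is preserved by the swap. The map $(\omega,\omega')\mapsto(\tilde\omega,\tilde\omega')$ is injective, so summing yields $Z_{a,v}^n\,Z_{b,u}^n\le Z_{a,u}^n\,Z_{b,v}^n$, which is~\eqref{eq:Z-log-concavity}.

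Requirement~\ref{positivityMarginalReq} is the main obstacle. By the Markov decomposition at time~$0$,
\[
(\mu_0^{m,n}\pi_0^{-1})(B) \;=\; \frac{\sum_{u_0\in B} Z_{0,u_0}^{m,0}\, Z_{u_0}^{0,n}}{\sum_{u_0} Z_{0,u_0}^{m,0}\, Z_{u_0}^{0,n}},
\]
where the sums run over the finite set of $u_0\in 2\Z$ reachable from $(m,0)$ in $|m|$ SSRW steps. Dividing numerator and denominator by $Z_0^{0,n}$ turns the $n$-dependence into the ratios $R_{u_0}^n:=Z_{u_0}^{0,n}/Z_0^{0,n}$. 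The crucial input, as indicated in the introduction and attributed to \cite{Janjigian--Rassoul-Agha:MR4089495}, is that $\Prb$-almost surely, for each such $u_0$ there exist random constants $0<c_{u_0}\le C_{u_0}<\infty$ with $c_{u_0}\le R_{u_0}^n\le C_{u_0}$ for all sufficiently large $n$. Bounding gives
\[
\liminf_{n\to\infty}(\mu_0^{m,n}\pi_0^{-1})(B) \;\ge\; \frac{\min_{u_0}c_{u_0}}{\max_{u_0}C_{u_0}}\cdot\frac{\sum_{u_0\in B}Z_{0,u_0}^{m,0}}{Z_0^{m,0}} \;=\; \frac{\min_{u_0}c_{u_0}}{\max_{u_0}C_{u_0}}\,(\mu_0^{m,0}\pi_0^{-1})(B).
\]
Choosing $m=-2M$ with $M$ large enough that the SSRW from $(m,0)$ can reach below $-r$ (respectively above $r$) at time $0$ ensures $(\mu_0^{m,0}\pi_0^{-1})(B)>0$ almost surely for $B=(-\infty,-r)\cap 2\Z$ (respectively $(r,\infty)\cap 2\Z$), since each individual reachable endpoint contributes a strictly positive $Z_{0,u_0}^{m,0}$ by $\Prb$-a.s.\ finiteness of $F$. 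All three requirements being verified, Theorem~\ref{thm:generalThm} delivers the desired joint localization.
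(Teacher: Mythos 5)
Your overall structure matches the paper's: verify \Cref{onePointLocalizationReq,partitionRatioReq,positivityMarginalReq} of \Cref{thm:generalThm} with $G=2\Z$ and conclude. \Cref{onePointLocalizationReq} is handled the same way. For \Cref{partitionRatioReq} your tail-exchange argument is a clean, self-contained alternative to the paper's route, which imports Lemma~C.3 of \cite{Janjigian--Rassoul-Agha:MR4089495} (restated as \Cref{thm:ratioMonotonicityDiscrete}) and telescopes it across the intermediate sites between $a$ and $b$. Your crossing-time swap is sound: since $\omega'_k-\omega_k$ starts nonnegative, ends nonpositive and changes by $0,\pm2$, a first meeting time $\tau$ exists; the swap preserves the multiset of positions in every time slice (hence the environment weight) and the total step weight ($2^{-n}$ per trajectory); and the first meeting time of the swapped pair is again $\tau$, so the map is an involution onto its image, in particular injective. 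This gives \eqref{eq:Z-log-concavity} directly.

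The gap is in \Cref{positivityMarginalReq}. Your Markov decomposition at time $0$ and the bound through $R_{u_0}^n := Z_{u_0}^{0,n}/Z_0^{0,n}$ are exactly what the paper does in \Cref{thm:positiveMassDiscrete}, but you invoke the $\Prb$-a.s.\ eventual two-sided boundedness of these \emph{point-to-line} ratios as though it followed directly from \cite{Janjigian--Rassoul-Agha:MR4089495}. It does not. Theorem~4.16 of that paper (equivalently \Cref{thm:partitionRatiosDiscrete} here) only controls \emph{point-to-point} ratios $Z_{a,x_n}^n/Z_{b,x_n}^n$ along sequences $x_n$ with $x_n/n\to v$ for a fixed interior slope $v$. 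Passing from that to $0<\liminf_n Z_a^n/Z_b^n\le\limsup_n Z_a^n/Z_b^n<\infty$ is precisely \Cref{thm:partitionRatioDiscrete}, and it requires real work: the shape theorem \eqref{shapeTheorem}, the nonconstancy/concavity of $\Lambda$ to pick a slope $v^*$ below which the contribution to $Z_a^n$ is exponentially negligible relative to $Z_b^n$, and then the point-to-point bound applied only on the remaining range of endpoints. That bridge is the main technical content of the paper's proof of \Cref{positivityMarginalReq}, and it is missing from your proposal; without it your ``crucial input'' is an unproved assertion. Once \Cref{thm:partitionRatioDiscrete} is supplied, the rest of your computation and the choice of $m$ negative enough that the walk can reach past $\pm r$ at time $0$ are correct.
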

    
We prove \Cref{thm:generalThm} in Section~\ref{generalPolymersSection}. We prove \Cref{thm:logConcaveCorollary} and \Cref{thm:mainThm1}  in Section~\ref{continuousSpaceSection}. We prove \Cref{thm:mainThm2} in Section~\ref{assumptions2Section}.

The fact that localization holds for single polymers under~\Cref{assumptions2} is a corollary of results establishing very strong disorder~\cite{CV06} and results 
deriving localization from very strong disorder for lattice polymers in \cite{Bates-Chatterje:MR4089496}. However,  no localization results are known to us for
polymers in continuous space and discrete time. We prove such a result in Section~\ref{veryStrongDisorderSection} using ideas from \cite{CV06}
to establish very strong disorder for these models, and applying the main result from \cite{bakhtin-seo2020localization} to derive localization from very strong disorder. 

The very strong disorder property is a strict inequality between the limiting average free energy of the model and the annealed upper bound. Under \Cref{assumptions2}, assuming finiteness of exponential moments of the environment, it was shown in Proposition 2.5 of \cite{CSY03}  that the average free energy,
\begin{equation}
    \freeEnergy := \lim_{n\to \infty}\frac{1}{n}\log Z^n\label{freeEnergyDef}
\end{equation}
exists almost surely and equals $\lim_{n\to \infty} \frac{1}{n}\E[\log Z^n]$. This was improved in \cite{LW09} to include the setting of \Cref{assumptions2}, where fewer exponential moments are assumed. The quantity 
\begin{equation}
    \logExp := \log \E[e^{-F_0(0)}]\label{annealedBoundDef}
\end{equation}
is the annealed bound of the free energy, and upper bounds the free energy by Jensen's inequality. The results from \cite{CV06} and \cite{Lac10} show that under \Cref{assumptions2} the very strong disorder property holds:
\begin{equation}
    \freeEnergy < \logExp.\label{veryStrongDisorder_lattice}
\end{equation}
The quantity $\freeEnergy - \logExp$ is called the Lyapunov exponent of the polymer model. Definitions \ref{freeEnergyDef} and \eqref{annealedBoundDef} can be extended to the continuous setting in the natural way. The existence of the limit in \eqref{freeEnergyDef} in the continuous setting is an easy extension of the analogous result in the discrete setting and is discussed at the beginning of Section~\ref{veryStrongDisorderSection}.

The very strong disorder property \eqref{veryStrongDisorder_lattice}  is equivalent  to localization of the sequence of endpoint measures $\rho_0^n$. Specifically, the following theorem was proved in \cite{Bates-Chatterje:MR4089496} in the context of \Cref{assumptions2} and then extended to the general continuous setting in \cite{bakhtin-seo2020localization}. 
\begin{theorem}[Theorem 1.2 in \cite{Bates-Chatterje:MR4089496} and Theorem 1.2 in \cite{bakhtin-seo2020localization}]\label{thm:strongDisorderImpliesLocalization}
The following holds under either \Cref{continuousEnvironmentAssumptions} or \Cref{assumptions2}. If $\freeEnergy < \logExp$, then for all $\delta>0$ there are $K,\theta>0$ such that $\Prb$-almost surely localization with parameters $(\delta,K,\theta)$ holds for the sequence of endpoint measures $(\rho_0^n)_{n\in\N}.$
\end{theorem}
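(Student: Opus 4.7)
The plan is to exploit the telescoping identity together with the conditional annealed bound. Since $\rho_0^k$ is measurable with respect to the $\sigma$-algebra $\mathcal{G}_{k-1}$ generated by $F_0,\dots,F_{k-1}$ while $F_k$ is independent of $\mathcal{G}_{k-1}$, we have
\[\frac{Z^{k+1}}{Z^k} = \int_{\X} e^{-F_k(x)}\, \rho_0^k(dx),\]
and Jensen's inequality conditional on $\mathcal{G}_{k-1}$ gives
\[G_k := \logExp - \E\!\left[\log\! \int e^{-F_k(x)}\rho_0^k(dx) \,\Big|\, \mathcal{G}_{k-1}\right] \ge 0.\]
Summing yields $\frac{1}{n}\log Z^n = \frac{1}{n}\sum_{k=0}^{n-1}\log(Z^{k+1}/Z^k)$. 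Once the martingale differences $\log(Z^{k+1}/Z^k) - \E[\log(Z^{k+1}/Z^k)\mid\mathcal{G}_{k-1}]$ are shown to be Ces\`aro-negligible---via an Azuma-type concentration argument justified by the exponential moment hypotheses in \Cref{continuousEnvironmentAssumptions} and \Cref{assumptions2}, together with Borel--Cantelli---the assumption $\freeEnergy < \logExp$ translates almost surely into $\lim_{n\to\infty}\frac{1}{n}\sum_{k=0}^{n-1} G_k = \logExp - \freeEnergy > 0$.

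The crux is then a quenched lower bound of the form $G_k \ge \eta(\delta,K)\cdot\mathbf{1}\{\sup_{x\in\X}\rho_0^k(B_K(x)) \le 1-\delta\}$ for some $\eta(\delta,K)>0$. If $\rho_0^k$ places mass at most $1-\delta$ in every ball of radius $K$, then one can decompose $\X$ into finitely many widely separated chunks each carrying a macroscopic piece of $\rho_0^k$-mass. Using $M$-dependence in the continuous-space setting or the i.i.d.\ structure in the SSRW setting, the average of $e^{-F_k(\cdot)}$ over each chunk is a genuinely random quantity, and averages across well-separated chunks are independent. The conditional Jensen gap of $\log$ applied to a nondegenerate convex combination of independent, nondeterministic random variables with controlled log-moment generating function is then quantitatively bounded below by a positive constant $\eta(\delta,K)$, via standard cumulant estimates exploiting that $F_k$ is not deterministic.

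Combining the two ingredients, if localization with parameters $(\delta,K,\theta)$ failed on a set of positive $\Prb$-measure, then on that set the density of indices $k$ with $\sup_x \rho_0^k(B_K(x))\le 1-\delta$ would exceed $1-\theta$, whence averaging the pointwise bound yields $\logExp - \freeEnergy \ge \eta(\delta,K)(1-\theta)$, which produces a contradiction by choosing $\theta$ sufficiently close to $1$. The main obstacle is establishing the quenched lower bound on $G_k$ in the continuous-space setting of \Cref{continuousEnvironmentAssumptions}: unlike the purely atomic lattice case of \cite{Bates-Chatterje:MR4089496}, here $\rho_0^k$ is absolutely continuous with potentially unbounded support, so ``spread mass'' must be converted into a geometric statement about ball-masses. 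This requires a tightness estimate controlling the tails of $\rho_0^k$---so that the decomposition into chunks lives in a compact window to which one can apply the $M$-dependence and exponential-moment hypotheses---and constitutes the main technical content of the extension carried out in \cite{bakhtin-seo2020localization}.
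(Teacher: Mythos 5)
The paper does not prove this theorem; it cites it verbatim from \cite{Bates-Chatterje:MR4089496} (lattice case) and \cite{bakhtin-seo2020localization} (continuous case). So there is no internal proof to compare against, and the question is whether your sketch would actually go through.

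The key step in your outline is the quenched lower bound
\[
G_k \;\ge\; \eta(\delta,K)\cdot\mathbf{1}\Big\{\sup_{x}\rho_0^k(B_K(x))\le 1-\delta\Big\},
\]
and this inequality is false. Take $\rho_0^k$ to be (approximately) a uniform mixture over $N$ widely separated sites $x_1,\dots,x_N$, with spacing much larger than both $K$ and the dependence range of $F_k$. Then $\sup_x\rho_0^k(B_K(x))\approx 1/N\le 1-\delta$, but
\[
\int e^{-F_k(x)}\,\rho_0^k(dx)\;\approx\;\frac{1}{N}\sum_{i=1}^N e^{-F_k(x_i)},
\]
which, by the law of large numbers applied to the i.i.d.\ (or $M$-dependent) summands $e^{-F_k(x_i)}$ with mean $e^{\logExp}$, converges to $e^{\logExp}$ as $N\to\infty$. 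Hence $G_k\to 0$, not $G_k\ge\eta>0$: \emph{spread-out} measures are precisely the ones that make the Jensen gap vanish. Your argument that ``the average of $e^{-F_k(\cdot)}$ over each chunk is a genuinely random quantity'' is true chunk-by-chunk, but averaging many independent chunks \emph{kills} the conditional Jensen gap rather than preserving a uniform lower bound.

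The reason this cannot be fixed locally is that the implication you need has essentially the opposite flavor. The telescoping identity together with a Comets--Shiga--Yoshida-type bound shows that $\freeEnergy<\logExp$ forces a positive Cesàro average of an overlap-type functional of $\rho_0^k$ (in this paper's Section~\ref{decayOfFractionalMoments} this is the role of $I(r,\rho^n)$ in \Cref{recursiveBound} and \Cref{thm:expofWILowerBound}). But positive overlap only guarantees, at a positive density of times, a lower bound on the mass of the \emph{heaviest} ball --- not that $1-\delta$ of the mass sits in a single ball of radius $K$. Upgrading ``heavy atom with positive density'' to genuine geometric localization is the actual technical content of \cite{Bates-Chatterje:MR4089496}: it relies on the abstract framework of partitioned subprobability measures and a compactification of the space of endpoint measures, not on a pointwise comparison of $G_k$ with the indicator of non-concentration. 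The continuous-space extension in \cite{bakhtin-seo2020localization} then handles the tightness and absolute-continuity issues you correctly flag at the end. Your sketch identifies the right initial telescoping and the right references, but the core implication is asserted via a false inequality and the genuinely hard part of the theorem is not addressed.
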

\Cref{thm:strongDisorderImpliesLocalization} in fact holds in arbitrary dimension and requires weaker conditions than \Cref{continuousEnvironmentAssumptions} and \Cref{assumptions2}. 
In Section~\ref{veryStrongDisorderSection} we derive \eqref{veryStrongDisorder_lattice}, and hence single point localization of the endpoint measures, in the continuous setting,
thus checking that~\Cref{onePointLocalizationReq} of \Cref{thm:generalThm} holds.
Specifically, we require the setting of \Cref{continuousEnvironmentAssumptions} and some additional assumptions on the density $\stepdensity,$ described below.
\begin{assumption}\label{densityAssumptions}
   The measure $\lambda$ is absolutely continuous with respect to Lebesgue measure. There is  a version of its density $\stepdensity$ that satisfies the following conditions.
    \begin{assumpRequirements}
        \item $\int_\R |x|^\momentnumber \stepdensity(x)dx < \infty$ for some $\momentnumber>2$.\label{momentsReq}
        \item $\sup_{x\in \R}p(x) < \infty.$\label{densityBoundedReq}
        \item There are $L,R\in\R$ satisfying $L< R$ such that $\stepdensity$ is nondecreasing on $(-\infty,L]$ and nonincreasing on $[R,\infty)$. In addition, $\stepdensity$ is bounded away from zero on $[L,R]$.
        \label{densityMonotonicityReq}
    \end{assumpRequirements}
\end{assumption}
\Cref{densityAssumptions} defines a broad class of densities. 
We prove the following lemma at the beginning of Section~\ref{logConcaveSection}.
\begin{lemma}\label{logConcaveSatisfiesAssumptions}
    If $\stepdensity$ is log-concave, then the measure $\lambda(dx) = \stepdensity(x)dx$ satisfies \Cref{densityAssumptions}.
\end{lemma}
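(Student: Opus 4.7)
The plan is to work with $V := -\log p$, which by log-concavity is a proper convex function on $\R$ taking values in $(-\infty, +\infty]$, and to derive the three requirements in the order II, III, I. For II, I would show $m := \inf V > -\infty$. Suppose for contradiction $V(x_n) \to -\infty$. Since $V$ is proper convex and hence continuous on the interior of its effective domain, this forces $|x_n|\to\infty$, say $x_n\to+\infty$. Fix any $y$ in the effective domain; for large $n$, $V(x_n)<V(y)$, and convexity on $[y,x_n]$ gives $V \le V(y)$ there, hence $V \le V(y)$ on $[y,\infty)$. Then $p \ge e^{-V(y)}>0$ on $[y,\infty)$, contradicting $\int p = 1$. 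So $m > -\infty$ and $\sup p = e^{-m} < \infty$, which is II.

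For III, I would first check that $m$ is attained. The same contradiction shows that if $V(x_n) \to m$ with $|x_n| \to \infty$, then $p$ is bounded below on a half-line, so any minimizing sequence is eventually bounded, and continuity of $V$ on the interior of its effective domain, combined with lower semicontinuity at boundary points, yields an actual minimizer. Convexity then makes $\arg\min V = [\alpha, \beta]$ a nonempty closed interval. If $\alpha < \beta$, set $L = \alpha$ and $R = \beta$: then $V \equiv m$ on $[L, R]$, so $p \equiv e^{-m}$ there, and convexity of $V$ with minimum attained on $[L,R]$ directly gives $V$ nonincreasing on $(-\infty, L]$ and nondecreasing on $[R, \infty)$, whence the required monotonicity of $p$. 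If $\alpha = \beta =: x^*$, the key fact is that a convex function is one-sided continuous at the boundary of its effective domain when approached from inside, so I can pick a non-degenerate $[L, R]$ with $L \le x^* \le R$ on which $V$ is finite and continuous (hence bounded above), giving $p$ bounded below on $[L, R]$; the required monotonicity of $p$ on the complement follows from that of $V$ and, where relevant, from $p \equiv 0$ off the effective domain.

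For I, I would note that $V$ cannot be constant on $[\beta, \infty)$, since otherwise $p$ would equal a positive constant on a half-line and not integrate to $1$. So there is $y > \beta$ with $V(y) > m$, and convexity yields, for all $x \ge y$,
\[
V(x) \ge V(y) + c\,(x - y), \qquad c := \frac{V(y) - m}{y - \beta} > 0,
\]
so $p(x) \le e^{-V(y)} e^{-c(x-y)}$ decays exponentially on $[y, \infty)$. A symmetric argument gives exponential decay for the left tail. Hence $\int_\R |x|^\nu p(x)\,dx < \infty$ for every $\nu > 0$, which is stronger than I.

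The only real bookkeeping obstacle is the boundary case in step III where the unique minimizer $x^*$ sits on the boundary of the effective domain of $V$ (as for the exponential density, with $x^* = 0$). There one must verify that $V$ remains one-sided continuous at this boundary point so that a non-degenerate $[L, R]$ on which $p$ is bounded away from zero can actually be produced; this follows from lower semicontinuity together with the convexity inequality applied to convex combinations tending to the boundary point from inside. Once this mild observation is in place, the rest of the argument is routine.
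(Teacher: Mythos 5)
Your proof is correct and follows the same basic line as the paper's, but there is one genuine improvement and one small but real imprecision worth flagging.

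The improvement concerns Requirement I. The paper disposes of the moment condition by citing an external lemma (Lemma 1 of Cule--Samworth) that asserts log-concave densities have all moments. You instead derive it directly: since $V$ cannot be constant on any half-line (integrability of $p$ would fail), you get a point $y>\beta$ with $V(y)>m$, and convexity forces $V(x)\ge V(y)+c(x-y)$ for $x\ge y$ with $c>0$, hence exponential tail decay and finiteness of all moments. This is a clean, self-contained argument and arguably preferable to the citation.

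The imprecision is in your treatment of the boundary case in step III. You assert that a convex function is lower semicontinuous at boundary points of its effective domain, and you use this to conclude that the minimizer is attained. That is not what convexity gives you. For $a=\inf E$, the convexity inequality on $[a,y]$ with $x\to a^+$ yields
\[
\limsup_{x\to a^+} V(x)\ \le\ V(a),
\]
i.e.\ $V$ cannot \emph{jump up} as you enter the domain, but it may jump \emph{down}: one can assign an arbitrarily large value $V(a)$ at the boundary point and still have a convex function. So lower semicontinuity, which is what you need to conclude $V(a)\le\liminf V(x_n)=m$ along a minimizing sequence $x_n\to a$, does \emph{not} follow from convexity. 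What is actually needed, and what the paper does explicitly in the sentence immediately preceding this lemma's proof, is to fix the version of $p$ that is continuous on $\overline{E}$ (equivalently, set $V(a):=\lim_{x\to a^+}V(x)$). This change of version is permitted because Assumption C asks only that \emph{some} version of the density have the stated properties, and the redefined $V$ is easily checked to remain convex since $a$ never lies strictly between two points of $\overline{E}$. With that convention fixed, your boundary-case argument, and in particular the choice $L=x^*$ at a boundary minimizer, goes through. I'd recommend stating this version choice up front rather than appealing to a semicontinuity property that convexity does not provide.
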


\begin{theorem}\label{thm:veryStrongDisorderTheorem}
    Suppose the environment satisfies \Cref{continuousEnvironmentAssumptions} and the density $\stepdensity$ satisfies \Cref{densityAssumptions}. Then, 
    the very strong disorder property  \eqref{veryStrongDisorder_lattice} holds. 
\end{theorem}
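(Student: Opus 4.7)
The proof adapts the fractional moment / change-of-measure scheme of Comets--Vargas~\cite{CV06} from the lattice case to the continuous-space, time-discrete setting. Set $W_n = Z^n/\E[Z^n]$; by spatial stationarity one has $\E[Z^n] = e^{n\logExp}$, so $W_n$ is a positive mean-one martingale with respect to $\mathcal{F}_n = \sigma(F_0,\ldots,F_{n-1})$. It is classical (lattice case in~\cite{CSY03}, continuous case in~\cite{bakhtin-seo2020localization}) that $\freeEnergy < \logExp$ is equivalent to $W_\infty = 0$ almost surely, and by Fatou's lemma this will follow once we exhibit $\theta\in(0,1)$ with $\E[W_n^\theta]\to 0$. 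The standard supermultiplicative argument of~\cite{CV06} further reduces this to producing a single $L$ for which $\E[W_L^\theta]$ is sufficiently small.

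To do so, fix $\theta\in(0,1)$, take an interval $I\subset\R$ whose width will be chosen $\asymp\sqrt L$, and a tilt parameter $c = \alpha L^{-1/2}$ for small $\alpha>0$; then define the tilted environment law
\begin{equation*}
\frac{d\tilde\Prb_L}{d\Prb} =: g_L = \prod_{k=0}^{L-1}\frac{\exp\bigl(c\int_I F_k(x)\,dx\bigr)}{\E\bigl[\exp\bigl(c\int_I F_0(x)\,dx\bigr)\bigr]}.
\end{equation*}
Under $\tilde\Prb_L$, the restriction of $F_k$ to $I$ is biased upward, making the environment less favorable there. The exponential moment condition \ref{exponentialMomentAssumption} and the finite-range dependence condition of \Cref{continuousEnvironmentAssumptions} together yield $\mathrm{Var}(\int_I F_0)\lesssim |I|$, and a Taylor expansion of the log moment generating functions delivers the cost bound
\begin{equation*}
\bigl(\E[g_L^{-\theta/(1-\theta)}]\bigr)^{1-\theta} \le \exp(C_1\, L c^2 |I|),
\end{equation*}
uniformly as $c|I|\to 0$.

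Hölder's inequality gives $\E[W_L^\theta]\le (\tilde\E_L[W_L])^\theta (\E[g_L^{-\theta/(1-\theta)}])^{1-\theta}$. Expanding $\tilde\E_L[W_L]$ via Fubini and the product form of $g_L$, the positive correlation condition of \Cref{continuousEnvironmentAssumptions} yields $\tilde\E_L[e^{-F_k(x)}]\le e^{\logExp}(1 - c_0 c\,\1_I(x))$ to leading order, so that
\begin{equation*}
\tilde\E_L[W_L] \le \int \exp\bigl(-c_0 c\, N_L^I\bigr)\,\mathrm{P}_0^L(dx),\qquad N_L^I := \sum_{k=0}^{L-1}\1_I(x_k).
\end{equation*}
A quantitative local central limit theorem for the partial sums $S_k$ of i.i.d.\ $\stepdensity$-distributed steps---guaranteed by the $\momentnumber$-th moment condition~\ref{momentsReq}, the density bound~\ref{densityBoundedReq}, and the monotonicity-plus-plateau condition~\ref{densityMonotonicityReq} of \Cref{densityAssumptions}---implies that with $|I|\asymp\sqrt L$ the occupation time satisfies $N_L^I \gtrsim L$ with $\mathrm{P}_0^L$-probability close to $1$; hence $\tilde\E_L[W_L] \le \exp(-C_2\, cL) = \exp(-C_2\alpha\sqrt L)$.

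Combining the two estimates with $c = \alpha L^{-1/2}$ and $|I|\asymp \sqrt L$ gives
\begin{equation*}
\log\E[W_L^\theta] \le -C_2\,\theta\alpha\sqrt L + C_1(1-\theta)\,\alpha^2\sqrt L,
\end{equation*}
which is strictly negative and tends to $-\infty$ once $\alpha$ is chosen small enough depending on $\theta$. The main obstacle is the occupation-time lower bound $N_L^I\gtrsim L$ in the third paragraph: a genuinely quantitative local CLT uniform in $L$ is required, and all three clauses of \Cref{densityAssumptions} enter essentially here---the $\momentnumber$-th moment for the rate of the local CLT, the density boundedness to rule out atomic concentrations, and the unimodal-plus-plateau structure to ensure the walk can both reach and persist in an arbitrary box of diffusive width $\sqrt L$. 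These are the minimal ingredients that allow the proof of~\cite{CV06} to carry over from the lattice to the continuous-space setting.
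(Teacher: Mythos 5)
Your proposal uses a genuinely different method from the paper. Both begin with the CV06-style coarse graining in terms of a fractional moment $\E[(W^m)^\theta]$, but you then apply the CV06 change-of-measure/tilting argument to bound $\E[(W_L)^\theta]$ directly, whereas the paper replaces this core step with a CSY03-style overlap-metric recursion (Lemma~\ref{recursiveBound} via $I(r,\rho^n)=\rho^n\otimes\rho^n\{|X-Y|<r\}$, Lemma~\ref{thm:U2bound}, and the Marcinkiewicz--Zygmund moment bound), yielding the polynomial decay $\E[(W^m)^\theta]\lesssim m^{-\momentnumber/2+\varepsilon}$ of Lemma~\ref{lem:fractionalMomentDecay}. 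These are legitimately different routes to the same estimate, but your sketch has two genuine gaps.

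First, the reduction ``the standard supermultiplicative argument of CV06 further reduces this to producing a single $L$ for which $\E[W_L^\theta]$ is sufficiently small'' does not carry over to continuous space. The CV06 coarse graining relies on decomposing the partition function as a countable sum over lattice sites at block boundaries and applying the inequality $(\sum_i a_i)^\theta\le\sum_i a_i^\theta$. Its integral analogue $\bigl(\int f\bigr)^\theta\le\int f^\theta$ is false, and this is exactly the obstruction the paper flags at the start of Section~\ref{veryStrongDisorderSection}. The paper's resolution---partitioning $\R$ into intervals $J_k^\delta$ of length $\delta$, applying the $\theta$-inequality to the resulting countable sum, and then controlling the extra supremum over $J_0$ that this produces (Lemmas~\ref{factorizationLemma} and~\ref{boundForSupW})---is the main technical content of the coarse-graining step, and you would need to supply a comparable argument. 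As written, your reduction to ``a single $L$'' is unjustified.

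Second, the bound $\tilde\E_L[W_L]\le\exp(-C_2\alpha\sqrt L)$ is not implied by ``$N_L^I\gtrsim L$ with $\mathrm{P}_0^L$-probability close to $1$.'' Writing $\tilde\E_L[W_L]\le\int\exp(-c_0c\,N_L^I)\,\mathrm{P}_0^L(dx)$ and splitting on the event $\{N_L^I<\epsilon L\}$, one has $\tilde\E_L[W_L]\le e^{-c_0c\epsilon L}+\mathrm{P}_0^L\{N_L^I<\epsilon L\}$; for a window $I$ of fixed diffusive width $A\sqrt L$, the probability $\mathrm{P}_0^L\{N_L^I<\epsilon L\}$ converges to a strictly positive Brownian occupation-time probability and does not vanish, so it cannot be absorbed into a bound that is exponentially small in $\sqrt L$. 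You would either need $A\to\infty$ with $L$ (and then re-examine the tilt cost $\exp(C_1Lc^2|I|)$, which grows with $A$), or accept a weaker, polynomial bound on $\tilde\E_L[W_L]$---which then destroys the favourable comparison against the exponentially large cost term. As a minor aside, the opening claim that $\freeEnergy<\logExp$ is \emph{equivalent} to $W_\infty=0$ a.s.\ is wrong: only the forward implication is classical, and the converse is a well-known open problem in general; but this misstatement is not used in the rest of your argument.
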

We prove \Cref{thm:veryStrongDisorderTheorem} in Section~\ref{veryStrongDisorderSection}. \Cref{thm:veryStrongDisorderTheorem} and \Cref{thm:strongDisorderImpliesLocalization} immediately imply the following one-point localization result in one dimension.
\begin{theorem}\label{thm:onePointLocalization}
    Suppose the environment satisfies \Cref{continuousEnvironmentAssumptions} and $\stepdensity$ satisfies \Cref{densityAssumptions}. Then, for any $\delta > 0$ there are $K,\theta > 0$ such that $\Prb$-almost surely localization with parameters $(\delta,K,\theta)$ holds for the sequence of endpoint measures $(\rho_0^n)_{n\in \N}.$
\end{theorem}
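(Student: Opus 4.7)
The plan is to observe that this theorem is a direct combination of two results stated immediately above it in the excerpt, namely \Cref{thm:veryStrongDisorderTheorem} and \Cref{thm:strongDisorderImpliesLocalization}. There is essentially no new work to do, just a chaining of hypotheses and conclusions.

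First I would invoke \Cref{thm:veryStrongDisorderTheorem}: under \Cref{continuousEnvironmentAssumptions} on the environment together with \Cref{densityAssumptions} on the step density $\stepdensity$, the very strong disorder property $\freeEnergy < \logExp$ holds. This verifies the hypothesis needed to apply the second result.

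Next I would apply \Cref{thm:strongDisorderImpliesLocalization}: since \Cref{continuousEnvironmentAssumptions} is in force and since we have just established $\freeEnergy < \logExp$, the conclusion of that theorem gives us exactly what we want — for every $\delta > 0$ there exist $K, \theta > 0$ such that, $\Prb$-almost surely, localization with parameters $(\delta, K, \theta)$ holds for the sequence of endpoint measures $(\rho_0^n)_{n\in\N}$.

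Since the proof reduces to citing two prior theorems verbatim, there is no real obstacle here; the entire substance of \Cref{thm:onePointLocalization} is carried by the very strong disorder result \Cref{thm:veryStrongDisorderTheorem}, whose proof (to be given in Section~\ref{veryStrongDisorderSection} following the strategy of \cite{CV06}) is where the actual work lies. The only minor bookkeeping would be to confirm that the moment assumption \ref{exponentialMomentAssumption} and the integrability assumption \ref{momentsReq} are sufficient for the free energy $\freeEnergy$ in \eqref{freeEnergyDef} to be well-defined in the continuous setting, which as noted in the excerpt is a routine extension of the discrete case.
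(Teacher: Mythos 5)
Your proposal matches the paper exactly: the paper explicitly states that Theorem~\ref{thm:veryStrongDisorderTheorem} and Theorem~\ref{thm:strongDisorderImpliesLocalization} ``immediately imply'' Theorem~\ref{thm:onePointLocalization}, and this is precisely the chaining you describe. No further comment is needed.
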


The remainder of the paper is organized as follows: In Section~\ref{almostSureFinitenessSection}, we prove that partition functions are finite and that  
the polymer measures are well-defined. In Section~\ref{generalPolymersSection}, we prove \Cref{thm:generalThm},
a general result on joint localization.   In Section~\ref{continuousSpaceSection}, we apply this general result in the continuous setting and prove \Cref{thm:logConcaveCorollary} and \Cref{thm:mainThm1}.
In Section~\ref{assumptions2Section}, we apply it in the lattice setting
and prove \Cref{thm:mainThm2}. In Section~\ref{veryStrongDisorderSection}, we establish very strong disorder for continuous polymer models proving 
\Cref{thm:veryStrongDisorderTheorem} and hence \Cref{thm:onePointLocalization}. Section~\ref{veryStrongDisorderSection} is independent of Sections~\ref{generalPolymersSection}--\ref{assumptions2Section}.

\section{Finiteness of the Partition Functions}\label{almostSureFinitenessSection}
 The goal of this section is to prove the following theorem on finiteness of partition functions.
 
 \begin{theorem}\label{almostSureFinitenessCorollary}
 Relations \eqref{almostSureOFinite} and  \eqref{AlmostSurep2pFiniteness} hold under 
    \Cref{assumptions2}. They also hold under the combination of \Cref{continuousEnvironmentAssumptions} and \Cref{densityAssumptions}, 
\end{theorem}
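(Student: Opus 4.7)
\emph{Proof plan.} Both claims reduce, by countable additivity in $(m,n)\in\Z^2$ and time-stationarity of $\Prb$, to showing that for each fixed $n\ge 1$, almost surely $Z_a^{0,n}<\infty$ for every $a\in\X$ and $Z_{a,u}^{0,n}<\infty$ for every $(a,u)\in\X^2$. Under \Cref{assumptions2} the argument is immediate: $\X=\Z$ makes both ``for all'' conditions countable, and for each fixed $a$ (respectively $(a,u)$), Fubini together with the exponential moment $\E[e^{-F_0(0)}]=e^\logExp<\infty$ gives $\E[Z_a^{0,n}]=e^{n\logExp}<\infty$ (respectively a sum of at most $2^n$ finite-mean terms); countable intersection finishes.

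Under \Cref{continuousEnvironmentAssumptions} combined with \Cref{densityAssumptions}, $\X=\R$ is uncountable and uniformity in $a$ must be extracted. The plan is to show, for each fixed $N\in\N$ and $I=[-N,N]$, that $\sup_{a\in I}Z_a^{0,n}<\infty$ and $\sup_{a,u\in I}Z_{a,u}^{0,n}<\infty$ almost surely, and then take a countable union over $N$. The technical heart is to build an integrable envelope
\[\tilde{\stepdensity}_N(x):=\sup_{h\in[-N,N]}\stepdensity(x-h),\qquad x\in\R.\]
Under \Cref{densityAssumptions}, $\stepdensity$ is bounded above, bounded below on $[L,R]$, and monotone outside $[L,R]$. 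Hence on $[L-N,R+N]$ one has $\tilde{\stepdensity}_N\le\|\stepdensity\|_\infty$, while outside this interval monotonicity forces $\tilde{\stepdensity}_N$ to equal a translate of $\stepdensity$ by $\pm N$; so $\tilde{\stepdensity}_N\in L^1(\R)\cap L^\infty(\R)$.

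For the point-to-line case, dominating $\stepdensity(x_1-a)\le\tilde{\stepdensity}_N(x_1)$ uniformly in $a\in I$ yields
\[\sup_{a\in I}Z_a^{0,n}\le\Bigl(\sup_{a\in I}e^{-F_0(a)}\Bigr)\int\tilde{\stepdensity}_N(x_1)\prod_{k=1}^{n-1}\stepdensity(x_{k+1}-x_k)\,e^{-\sum_{k=1}^{n-1}F_k(x_k)}\,dx_1\cdots dx_n.\]
The prefactor is a.s.\ finite by continuity of $F_0$ from \Cref{continuousEnvironmentAssumptions}; taking the expectation of the integral via Fubini and using $\E[e^{-F_k(x_k)}]=e^{\logExp}$ together with $\int\stepdensity(x_{k+1}-x_k)dx_{k+1}=1$ reduces it to $e^{(n-1)\logExp}\|\tilde{\stepdensity}_N\|_{L^1}<\infty$, so the integral itself is a.s.\ finite. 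The point-to-point case runs identically, dominating both endpoint factors $\stepdensity(x_1-a)$ and $\stepdensity(u-x_{n-1})$ by $\tilde{\stepdensity}_N$; the resulting middle factor $\stepdensity^{*(n-2)}$ is uniformly bounded via Young's inequality and $\|\stepdensity\|_\infty<\infty$, so the expectation remains finite.

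The only nontrivial step is the construction of the envelope $\tilde{\stepdensity}_N$; it depends essentially on the monotone-tail structure of $\stepdensity$ provided by \Cref{densityAssumptions}, without which $\sup_{a\in I}\stepdensity(\,\cdot\,-a)$ could fail to be integrable and the uniformity over $a\in I$ would break down.
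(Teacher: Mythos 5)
Your proposal is correct, and it rests on the same core mechanism as the paper's proof: use the monotone-tail structure from \Cref{densityAssumptions} to produce an integrable majorant for shifted densities, then transfer finiteness from expectations to a.s.\ finiteness via Tonelli, and finally beat the uncountable supremum over $a$ (resp.\ $(a,u)$) with that majorant. The difference is in the packaging. The paper factors the argument through an auxiliary lemma (their \Cref{OfiniteExistence}) which requires only a \emph{local} domination $|\stepdensity(x-a)-\stepdensity(x)|\le h(x)$ for shifts $a\in[0,a_0)$; it then tiles $\R$ by the countable grid $\{ka_0\}$ and compares a general shift $a=ka_0+a'$ to its nearest grid point by the triangle inequality, obtaining an event $\Omega''$ which is a countable intersection of almost sure events yet controls all $a\in\R$ at once. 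You instead build a \emph{global} envelope $\tilde{\stepdensity}_N=\sup_{h\in[-N,N]}\stepdensity(\cdot-h)$ on each compact window $[-N,N]$ and take a countable intersection over $N$. Both envelopes are integrable for the same reason (bounded on a compact, equal to a fixed translate of $\stepdensity$ outside it). Your route avoids the abstract lemma and may be easier to read in isolation; the paper's route is marginally more economical in that it only needs shift-domination for arbitrarily small shifts, and the intermediate lemma is reused for both the point-to-line and point-to-point claims. For point-to-point, the paper dominates only the first step by the envelope and simply bounds $\stepdensity(u-y)\le\sup\stepdensity$ for the last step, while you dominate both endpoints by the envelope and invoke Young's inequality for the middle; both work, and yours is not harder. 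One minor stylistic slip: you say ``take a countable union over $N$'' --- you mean union of the windows $[-N,N]$, which at the level of events is of course an intersection of the almost-sure events $\{\sup_{a\in[-N,N]}Z_a^{0,n}<\infty\}$.
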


We note that the conditions of the theorem are not the most general. For example, if $\lambda$ has bounded support, then all partition functions we consider are finite for every
realization of  $F\in\Omega$. 

The part of the theorem concerning \Cref{assumptions2} is trivial because in this case partition functions are finite sums of a.s.-finite r.v.'s. For the continuous case 
we will need the following auxiliary lemma.
\begin{lemma}\label{OfiniteExistence}
Let $\mathcal{G}$ be a countable set of random nonnegative continuous functions defined on the probability space $\Omega$. Suppose that for all $G\in \mathcal{G},$
\begin{equation}
    \sup_{x\in \R}\E[G(x)]<\infty.\label{boundedExpectation}
\end{equation}
Also suppose that there is $h\in L^1(\R,dx)$ and $a_0>0$ such that for all $a\in[0,a_0)$,
    \begin{equation}
        |\stepdensity(x-a) - \stepdensity(x)| \le h(x),\quad\text{Lebesgue-a.e.\ } x\in \R.\label{shiftenDensityDominated}
    \end{equation}
    Then, the set
    \begin{equation}
       \Omega' := \bigcap_{G\in \mathcal{G}}\Big\{\int_{\R}\stepdensity(x-a)G(x)dx < \infty \text{ for all }a\in \R\Big\}\label{finiteOSetG}
    \end{equation} 
    satisies  $\Prb(\Omega')=1$.
\end{lemma}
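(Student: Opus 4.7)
Since $\mathcal{G}$ is countable, it suffices to show that for each fixed $G\in\mathcal{G}$ the event
\[\Omega_G := \Big\{\int_\R \stepdensity(x-a)G(x)\,dx < \infty \text{ for all } a\in\R\Big\}\]
has full probability; then $\Omega'=\bigcap_{G\in\mathcal{G}}\Omega_G$ has full probability by countable additivity. Fix such a $G$ and write $f_G(a):=\int_\R \stepdensity(x-a)G(x)\,dx$. The plan is to first prove $f_G(q)<\infty$ for every $q\in\Q$ on an event of full probability, and then use hypothesis~\eqref{shiftenDensityDominated} to show that $f_G(a)$ is bounded by $f_G(q)$ plus a finite correction whenever $a-q\in[0,a_0)$, thereby transferring finiteness to every $a\in\R$ via a rational $q$ chosen from the nonempty interval $(a-a_0,a]$.

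For the rational-shift step, joint measurability of $(\omega,x)\mapsto \stepdensity(x-q)G(\omega,x)$ (where continuity of $G$ in $x$ is used) together with nonnegativity justify Fubini--Tonelli, giving
\[\E[f_G(q)] \;=\; \int_\R \stepdensity(x-q)\,\E[G(x)]\,dx \;\le\; \sup_{x\in\R}\E[G(x)] \;<\; \infty\]
by~\eqref{boundedExpectation} and $\int\stepdensity\,dx=1$. Hence $f_G(q)<\infty$ almost surely, and intersecting over the countable set $\Q$ preserves full measure.

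For the bootstrap, given $q\in\Q$ and $a\in[q,q+a_0)$, the substitution $y=x-q$ combined with~\eqref{shiftenDensityDominated} applied to the nonnegative shift $a-q\in[0,a_0)$ yields the pointwise estimate $\stepdensity(x-a)\le \stepdensity(x-q)+h(x-q)$ for Lebesgue-a.e.\ $x$, so that
\[f_G(a)\;\le\; f_G(q)\;+\;\int_\R h(x-q)G(x)\,dx \;=:\; f_G(q)+H_G(q).\]
Fubini again gives $\E[H_G(q)]\le \|h\|_{L^1}\sup_x\E[G(x)]<\infty$, whence $H_G(q)<\infty$ almost surely, and simultaneously so for all $q\in\Q$ on a full-probability event. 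On the intersection of the two full-probability events thus constructed, every $a\in\R$ admits some $q\in\Q$ with $q\le a<q+a_0$, and the displayed estimate then forces $f_G(a)<\infty$, completing the argument for this $G$.

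The only bookkeeping subtlety worth flagging is that~\eqref{shiftenDensityDominated} is one-sided, covering only nonnegative shifts $a\in[0,a_0)$; this forces the bootstrap to pick $q\le a<q+a_0$ rather than a two-sided approximation. This is not restrictive, because density of $\Q$ in $\R$ makes such a rational available for every real $a$, and no symmetric companion to~\eqref{shiftenDensityDominated} is required. Beyond this, the argument is a routine Fubini--plus--dominated-increment bootstrap.
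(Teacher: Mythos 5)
Your proof is correct and matches the paper's argument in substance: both establish almost-sure finiteness at a countable family of anchor shifts via Fubini and hypothesis~\eqref{boundedExpectation}, then bootstrap to all real $a$ using the triangle inequality and the dominating function $h$ from~\eqref{shiftenDensityDominated}. The only cosmetic difference is your choice of $\mathbb{Q}$ as the anchor set, whereas the paper uses the lattice $a_0\Z$; either works, since both are countable and every real $a$ lies within a nonnegative shift of less than $a_0$ from some anchor.
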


\begin{proof}
    Let us define a measurable set
    \begin{equation}
        \Omega'' = \bigcap_{G\in \mathcal{G}}\bigcap_{k\in\Z}\Big\{\int_\R\stepdensity(x-ka_0)G(x)dx <\infty\Big\}\cap \Big\{\int_\R h(x-ka_0) G(x) dx < \infty\Big\}.\label{FinitePartitionFunctionEvent}
    \end{equation}
  Condition \eqref{boundedExpectation} implies
    \[\E \int_\R \stepdensity(x-ka_0)G(x)dx<\infty\] and 
    \[\E \int_{\R}h(x-a)G(x)dx<\infty.\] 
    Therefore, $\Prb(\Omega'') = 1.$ We will show that $\Omega''\subset \Omega'$, establishing \eqref{finiteOSetG}.
    
    Let $G\in \mathcal{G}.$ Let $x_0\in \R$, $k\in \Z$, and $a\in [0,a_0)$ satisfy $x_0 = k a_0+a.$ If $F\in \Omega'',$ then
    \begin{align*}
        & \int_{\R}\stepdensity(x-x_0)G(x)dx \\
        &=  \int_{\R}\stepdensity(x-ka_0 - a)G(x)dx-  \int_{\R}\stepdensity(x-ka_0)G(x)dx +  \int_{\R}\stepdensity(x-k a_0)G(x)dx\\
        & \le  \int_\R |\stepdensity(x - ka_0 - a) - \stepdensity(x-ka_0)|G(x)dx + \int_{\R}\stepdensity(x-k a_0)G(x)dx\\
        & \le\int_\R h(x-ka_0)G(x)dx + \int_{\R}\stepdensity(x-k a_0)G(x)dx\\
        & <\infty.
    \end{align*}
    It follows that $\Omega''\subset \Omega'.$
\end{proof}

\bpf[Proof of Theorem~\ref{almostSureFinitenessCorollary}]
\Cref{exponentialMomentAssumption} implies
\begin{equation}
\label{eq:exp-moment1}
\E[e^{-F_0(0)}]<\infty,
\end{equation}
so $\E[Z_a^{m,n}] < \infty$ for all $a\in \X$ by Fubini's theorem and thus $\Prb\{Z_{a}^{m,n} < \infty\} = 1$ for all $a\in \R.$ To prove   \eqref{almostSureOFinite},
we need to extend this to uncountable intersection over all $a$. 
Since
 \[
 Z_a^{m,n} = e^{-F_m(a)}\int_\R \stepdensity(x-a)Z_x^{m+1,n}dx,
 \] 
 relation \eqref{almostSureOFinite} will follow from  Lemma~\ref{OfiniteExistence} applied to the collection
\[
\mathcal{G}=\Big\{x\mapsto Z_{x}^{m+1,n}:\quad  m,n\in \Z,\ m<n \Big\}
\]
once we check conditions \eqref{boundedExpectation} and \eqref{shiftenDensityDominated}. Condition~\eqref{boundedExpectation} follows from \eqref{eq:exp-moment1}.
Condition \eqref{shiftenDensityDominated} with some function $h\in L^1$ is satisfied if $x\mapsto \sup_{a\in (0,a_0)}\stepdensity(x-a)$ is integrable on $\R$. Since the latter follows
from \Cref{densityAssumptions}, the proof of \eqref{almostSureOFinite} under  the combination of \Cref{continuousEnvironmentAssumptions} and \Cref{densityAssumptions} is complete.

\medskip

Let us prove \eqref{AlmostSurep2pFiniteness}.

If $F\in \Ofinite$, then \eqref{p2l_p2pIntegration} implies that for all $m,n\in \Z$ satisfying $m<n,$ all $a\in \R$, and $\gamma$--a.e.\ $u\in \R$, 
\begin{equation}
    Z_{a,u}^{m,n}<\infty.\label{p2pFiniteness_ae}
\end{equation}
To prove~\eqref{AlmostSurep2pFiniteness}, we need to show that this relation holds for all $a,u\in \R$ (rather than just $\gamma$-a.e.~$u$), $\Prb$-almost surely. We can apply Lemma~\ref{OfiniteExistence} to
\[
\mathcal{G}=\Big\{x\mapsto \int_\R Z_{x,y}^{m+1,n-1}e^{-F_{n-1}(y)}dy:\quad  m,n\in\Z,\ m<n-1\Big\}
\]
because \eqref{boundedExpectation}
  follows from~\eqref{eq:exp-moment1} and we already checked condition~\eqref{shiftenDensityDominated} in our proof of~\eqref{almostSureOFinite}.  Thus we have $\Prb(\Omega') =1$, where  $\Omega'$ is defined in \eqref{finiteOSetG}.

Let $F\in\Omega'$. For every $a,u\in \R$ and every $m,n\in \Z$ satisfying $m<n-1,$  \Cref{densityBoundedReq} and the definition of $\Omega'$ imply
\begin{align}
    Z_{a,u}^{m,n} &= \int_\R Z_{a,y}^{m,n-1}e^{-F_{n-1}(y)}\stepdensity(u-y)dy \notag\\
    &\le \sup_{z\in \R}\stepdensity(z)\int_\R Z_{a,y}^{m,n-1}e^{-F_{n-1}(y)}dy \notag\\
    &\le e^{-F_m(a)}\sup_{z\in \R}\stepdensity(z)\int_\R\int_\R  \stepdensity(x-a)Z_{x,y}^{m+1,n-1} e^{-F_{n-1}(y)}dxdy \notag\\
    &< \infty.\label{p2pFiniteness_e}
\end{align}
If $m = n-1$, then for every $F\in \Omega,$ 
\[Z_{a,u}^{n-1,n} = e^{-F_{n-1}(a)}\stepdensity(u-a) < \infty.\] 
Therefore, $\Omega'\subset \OfinitePoint$ implying \eqref{AlmostSurep2pFiniteness} and completing the proof of Theorem~\ref{almostSureFinitenessCorollary}.
 \epf

\section{Joint Localization for General Polymer Models}\label{generalPolymersSection}
The main goal of this section is to prove \Cref{thm:generalThm}.
The main idea of the proof is that for $m<0$, the endpoint distribution $\rho_{0}^{m,n}(dy)$ is a mixture of endpoint distributions $\rho_{x}^{0,n}$ for
$x\in\R$. Thus, if localization holds for $(\rho_{0}^{m,n})_{n>m}$, i.e., the measures ~$\rho_{0}^{m,n}$, are mostly concentrated in a compact region $R$, at least some measures~$\rho_{x}^{0,n}$ also must assign large mass to $R$. Finally, a monotonicity argument allows us to conclude.
The endpoint distributions $(\rho^{m,n}_x(dy))_{x\in \X}$ of point-to-line polymers can be viewed as probability kernels from $\X$ to $\R$ equipped with Borel $\sigma$-algebras.
We recall that for measurable spaces $(S,\mathcal{S})$, $(E,\mathcal{E})$, a function $\kappa:S\times \mathcal{E}\to [0,1]$ is a probability kernel (from $S$ to $E$) if $\kappa_x$ is a probability measure on $(E,\mathcal{E})$ for every $x\in S,$ and $x\mapsto \kappa_{x}(A)$ is a measurable function for every $A\in \mathcal{E}.$ 
If $\chi$ is a probability measure on~$S$, then the convolution $\chi \kappa$ is the probability measure on $(E,\mathcal{E})$ given by
\[\chi \kappa(A) = \int_S\chi(dx) \kappa_x(A),\quad A\in \mathcal{E}.\]

For $d\in \N$ and $x,y\in \R^d$ we write $x\preceq y$ to mean $x_k \le y_k$ for all $k=1,\dots, d.$ We will say that a~function $f:\R^d\to \R$ is {\it coordinatewise nondecreasing} or simply {\it monotone} (for brevity) if $f(x) \le f(y)$ for all $x,y\in \R^d$ satisfying $x\preceq y.$ 

For measures $\nu,\mu$ on $\R^d,$ we say that $\nu$ stochastically dominates $\mu,$ and write $\mu \preceq \nu,$ if 
\begin{equation}
    \int_{\R^d}f(x)\mu(dx) \le \int_{\R^d}f(x)\nu(dx)\label{stochasticDominanceDef}
\end{equation}
for all monotone $f:\R^d\to [0,1].$  Equivalently, one can require~\eqref{stochasticDominanceDef} to hold for all monotone
$f:\R^d\to [0,\infty)$ or  for all monotone $f:\R^d\to \{0,1\}$. In $d=1$, $\mu \preceq \nu$ is equivalent to $\mu((-\infty,x])\ge \nu((-\infty,x])$ for every $x\in \R.$

\subsection{Deterministic Joint Localization}

We will derive \Cref{thm:generalThm} from the following general result.

\begin{proposition}\label{thm:abstractLocalization}
Let $a<b$ and $G\subset \X$ be a measurable set. Assume that $(\kappa^n)_{n\in \N}$ is a sequence of probability kernels from $\X$ to $\X$ and $(\chi^n)_{n\in \N}$ is a sequence of probability measures on $\X$. Suppose the following conditions hold:
\begin{requirements}
\item \label{localizationReq} (localization) For every $\delta >0,$ there are $K,\theta > 0$ such that 
localization with
parameters $(\delta,K,\theta)$ holds for the sequence of probability measures $(\chi^n \kappa^n)_{n\in \N}$.
\item\label{monotonicityReq} (monotonicity) $\kappa_x^n\preceq \kappa^n_y$ for every $x,y\in G\subset \X$ satisfying $x\le y$.
\item \label{positivityReq} (positive mass) 
\begin{align}
    \eta_- &:= \liminf_{n\to \infty} \chi^n((-\infty,a)\cap G)>0,\label{etaNeg_Def}\\
    \eta_+ &:= \liminf_{n\to \infty} \chi^n((b,\infty)\cap G)>0.\label{etaPos_Def}
\end{align}
\end{requirements}
Then, for every $\delta > 0,$ there are $K,\theta > 0$ such that  joint 
localization with parameters $(\delta,K,\theta)$
holds for the sequence $(\kappa_x^n)_{n\in \N,\,x\in (a,b)\cap G}$.
\end{proposition}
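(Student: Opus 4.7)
The plan is to exploit the mixture structure together with the monotonicity hypothesis. Since $\chi^n\kappa^n$ concentrates in a ball of radius $K$, Markov's inequality forces most points $x$ (with respect to $\chi^n$) to have $\kappa^n_x$ already concentrated in that same ball. The positive-mass hypothesis \ref{positivityReq} then guarantees that among these good points we can find witnesses $x_-\in (-\infty,a)\cap G$ and $x_+\in (b,\infty)\cap G$ with $\kappa^n_{x_\pm}$ concentrated in the ball. Finally, the stochastic monotonicity \ref{monotonicityReq} sandwiches $\kappa^n_c$ between $\kappa^n_{x_-}$ and $\kappa^n_{x_+}$ for any $c\in(a,b)\cap G$, forcing $\kappa^n_c$ to be concentrated in that same ball.

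\textbf{Quantitative setup.} Let $\eta=\min(\eta_-,\eta_+)>0$. Given the target $\delta'>0$, choose $\epsilon=\delta'/4$ and then pick $\delta>0$ so small that $\delta/\epsilon<\eta/3$. Apply \ref{localizationReq} with this $\delta$ to produce $K,\theta>0$ such that the set
\[
E=\bigl\{k\in\N:\sup_{z\in\R}(\chi^k\kappa^k)(B_K(z))>1-\delta\bigr\}
\]
has lower density at least $\theta$. For each $k\in E$ pick $z_k$ with $(\chi^k\kappa^k)(B_K(z_k))>1-\delta$, and use \ref{positivityReq} to choose $N_0$ with $\chi^k(A_\pm)>\eta/2$ for all $k\ge N_0$, where $A_-=(-\infty,a)\cap G$ and $A_+=(b,\infty)\cap G$. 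For $k\in E\cap[N_0,\infty)$, Markov's inequality applied to $x\mapsto\kappa^k_x(B_K(z_k)^c)$ under $\chi^k$ gives
\[
\chi^k\bigl\{x:\kappa^k_x(B_K(z_k))\le 1-\epsilon\bigr\}\le\frac{\delta}{\epsilon}<\frac{\eta}{3}.
\]
Since this bound is strictly smaller than both $\chi^k(A_-)$ and $\chi^k(A_+)$, each of $A_-$ and $A_+$ must meet the complementary good set, producing $x_-\in A_-$ and $x_+\in A_+$ with $\kappa^k_{x_\pm}(B_K(z_k))>1-\epsilon$.

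\textbf{Monotonicity sandwich and conclusion.} Fix any $c\in(a,b)\cap G$. Because $x_-<a\le c\le b<x_+$ and $x_-,c,x_+\in G$, \ref{monotonicityReq} gives $\kappa^k_{x_-}\preceq\kappa^k_c\preceq\kappa^k_{x_+}$. Testing against the monotone indicator $\1\{y>z_k+K\}$ yields
\[
\kappa^k_c\bigl((z_k+K,\infty)\bigr)\le\kappa^k_{x_+}\bigl((z_k+K,\infty)\bigr)\le\kappa^k_{x_+}(B_K(z_k)^c)<\epsilon,
\]
and testing against $\1\{y\ge z_k-K\}$ gives
\[
\kappa^k_c\bigl((-\infty,z_k-K)\bigr)\le\kappa^k_{x_-}\bigl((-\infty,z_k-K)\bigr)<\epsilon.
\]
Adding these two tail bounds, $\kappa^k_c(B_K(z_k))>1-2\epsilon>1-\delta'$. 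Taking the infimum over $c\in(a,b)\cap G$ and the supremum over $z$, the event in Definition~\ref{def:joint-localiz} holds at every $k\in E\cap[N_0,\infty)$, a set of lower density at least $\theta$, which is the desired joint localization with parameters $(\delta',K,\theta)$.

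\textbf{Expected main obstacle.} The argument is essentially soft; there are no technical estimates. The only bookkeeping nuisance is tracking how the parameters $(\delta',\epsilon,\delta)$ depend on $\eta_\pm$, and confirming that removing the finitely many initial indices $k<N_0$ does not reduce the lower density of $E$, which is a standard Cesaro observation.
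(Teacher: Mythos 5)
Your proof is correct and follows essentially the same strategy as the paper's: Markov's inequality applied to the mixture $\chi^n\kappa^n$ identifies a large-$\chi^n$-measure set of good starting points, the positive-mass hypothesis then produces witnesses $x_-\in(-\infty,a)\cap G$ and $x_+\in(b,\infty)\cap G$ inside that good set, and the stochastic monotonicity sandwiches $\kappa^n_c$ between $\kappa^n_{x_-}$ and $\kappa^n_{x_+}$ to control the tails for every $c\in(a,b)\cap G$. The only difference is cosmetic bookkeeping of constants (the paper fixes the target tolerance and sets $\delta'=\frac{\delta}{3}\min(\eta_-,\eta_+)$, whereas you back-solve for the intermediate tolerances), and both routes give the same quantitative conclusion.
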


\begin{proof}
 Let $\delta > 0$ and define $\delta' = \frac{\delta}{3}\min(\eta_-, \eta_+).$ By \Cref{localizationReq} we can find $K,\theta > 0$ such that 
 localization with parameters  $(\delta', K,\theta)$ holds for 
 $(\chi^n\kappa^n)_{n\in\N}$. 

Suppose that for some $n\in \N$ and some $z\in \R$ we have
\begin{equation}
\chi^n\kappa^n(B_K(z)) > 1 - \delta'.\label{deltaPrimeN}
\end{equation}
Define $R  = \{x\in G\,:\,\kappa_x^n(B_K(z)) >  1-\delta\}$.  
We will show that sets $R\cap(-\infty,a) $ and $R\cap(b,\infty)$ are not empty  by showing that 
$\chi^n$ assigns positive mass to each of them.
 By \eqref{deltaPrimeN},
\begin{align*}
1 - \delta' &< \int_\X \kappa_x^n(B_K(z)) \chi^n(dx) \\
& \le \chi^n\left(\X\setminus G \right) + (1-\delta)\chi^n\left(G\setminus R\right) + \chi^n\left(R\right)\\
& =1 - \delta \chi^n\left(G\setminus R\right),
\end{align*}
so
\begin{equation}
    \chi^n\left(G\setminus R\right) < \frac{\delta'}{\delta}.\label{mnR_UpperBound}
\end{equation}

The definition of localization does not change if we discard finitely many values of~$n$, and so, due to \eqref{etaNeg_Def} and \eqref{etaPos_Def}, without loss of generality we will assume that $n$ is large enough to ensure that $\chi^n((-\infty,a)\cap G) >  \eta_-/2$ and $\chi^n((b,\infty)\cap G) > \eta_+/2$. By \eqref{deltaPrimeN} and \eqref{mnR_UpperBound}, 
\begin{align*}
    1-\delta' &< \int_\X \kappa_x^n(B_K(z)) \chi^n(dx)\\
    & \le \chi^n\left(R\cap (-\infty,a)\right) + (1-\delta) \chi^n\left(G\setminus R\right) + \chi^n\left(G\cap [a,\infty)\right) + \chi^n\left(\X\setminus G \right)\\
    & < \chi^n\left(R\cap (-\infty,a)\right) + (1-\delta) \frac{\delta'}{\delta} + \chi^n\left(G \right) - \frac{\eta_-}{2}+ \chi^n\left( \X\setminus G\right)\\
    & =  \chi^n\left(R\cap (-\infty,a)\right) + (1-\delta) \frac{\delta'}{\delta} + 1-\frac{\eta_-}{2}.
\end{align*}
Therefore,
\begin{equation*}
    \chi^n(R\cap (-\infty,a)) > \frac{\eta_-}{2} - \frac{\delta'}{\delta} > 0,
\end{equation*}
where the second inequality follows from the definition of $\delta'.$ Thus, $R\cap (-\infty,a) \neq \emptyset$, and there is $x_-\in G$ with $x_- < a$ such that 
\begin{equation}
\label{eq:x_-}
\kappa_{x_-}^n(B_K(z)) > 1-\delta.
\end{equation}
A similar proof shows that  there is $x_+\in G$ with $x_+ >  b$ such that 
\begin{equation}
\label{eq:x_+}
\kappa_{x+}^n(B_K(z)) > 1 - \delta.
\end{equation}
If $y\in (a,b)\cap G,$ then  $x_-<y<x_+$, so   \eqref{eq:x_-}, \eqref{eq:x_+} and \Cref{monotonicityReq} imply
\begin{equation*}
    \kappa_y^n((-\infty,z-K)) \le \kappa_{x_-}^n((-\infty,z-K)) \le  1- \kappa_{x_-}^n(B_K(z))<    \delta\label{kappan_gea_UpperBound}
\end{equation*}
and
\begin{equation*}
    \kappa_y^n((z+K,\infty)) \le \kappa_{x_+}^n((z+K,\infty)) \le 1- \kappa_{x_+}^n(B_K(z))< \delta.\label{kappan_leb_UpperBound}
\end{equation*}
The last two displays imply
\begin{equation}\label{kappan_forally_UpperBound}
\kappa_y^n(B_K(z)) > 1-2\delta,\, \quad \forall y\in (a,b)\cap G.
\end{equation}
In summary, for sufficiently large $n\in \N$, if  \eqref{deltaPrimeN} holds, then so does \eqref{kappan_forally_UpperBound}. Therefore,
\begin{align*}
    \liminf_{n\to \infty}\frac{1}{n}\sum_{i = 1}^n &\1\Big\{\sup_{z\in \R}\inf_{x\in (a,b)\cap G}\kappa_x^i(B_K(z)) > 1 - 2\delta\Big\} \\
    &= \liminf_{n\to \infty}\frac{1}{n}\sum_{i = 1}^n \1\Big\{\exists z \text{ s.t. } \forall x\in (a,b)\cap G,\,\kappa_x^i(B_K(z)) > 1 - 2\delta\Big\} \\
    &\ge \liminf_{n\to \infty}\frac{1}{n}\sum_{i = 0}^{n-1}\1\Big\{\sup_{z\in \R} \chi^i\kappa^i(B_K(z))> 1 - \delta'\Big\}\\
    & \ge \theta.
    \end{align*}
This completes the proof since $\delta>0$ is arbitrary.
\end{proof}

\subsection{Proof of \Cref{thm:generalThm}}

Let us check that the assumptions of \Cref{thm:abstractLocalization} hold true for the polymer endpoint distributions. For ease of reading, the dictionary between \Cref{thm:abstractLocalization} and \Cref{thm:generalThm} should be understood as follows. The collection of measures $(\kappa_x^n)_{x\in \X,\,n\in \N}$ will correspond to the endpoint measures $(\rho_x^n)_{x\in \X,\,n\in \N}.$ The sequence $(\chi^n)_{n\in \N}$ will correspond to the marginals $(\mu_{0}^{m,n} \pi_0^{-1})_{n=m,m+1,\dots}$.

\Cref{monotonicityReq} of \Cref{thm:abstractLocalization} is implied by the following lemma. Note that it requires neither \Cref{continuousEnvironmentAssumptions} nor \ref{assumptions2}. 
\begin{lemma}\label{thm:endpointMonotonicity}
Suppose that $F\in \OfiniteBoth$. Also suppose that \eqref{eq:Z-log-concavity} holds for some $n\in \N$, some $a$ and $b$ satisfying $a\le b$, and $\gamma^{\otimes 2}$-a.e.\ $(u,v)$ satisfying $u\le v$. Then $\rho_a^n \preceq \rho_b^n$. 
\end{lemma}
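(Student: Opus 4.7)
The plan is a direct FKG-style rearrangement. Recall from \eqref{eq:end-density} that $\rho_a^n(du)=Z_{a,u}^n/Z_a^n\,\gamma(du)$, and note that on $\X=\R$ or $\X=\Z$, stochastic dominance $\rho_a^n\preceq \rho_b^n$ is equivalent to the tail inequality $\rho_b^n((t,\infty))\ge \rho_a^n((t,\infty))$ for every $t\in\R$. So I would fix $t\in\R$ and try to show
\[
\rho_b^n((t,\infty))\,\rho_a^n((-\infty,t]) \;\ge\; \rho_a^n((t,\infty))\,\rho_b^n((-\infty,t]),
\]
which, after expanding $\rho_a^n((-\infty,t])=1-\rho_a^n((t,\infty))$ and similarly for $b$, is equivalent to the desired tail inequality.

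Using the density formulas, the two sides of the displayed inequality multiplied by $Z_a^n Z_b^n$ become the double integrals
\[
\int_{(t,\infty)}\!\!\int_{(-\infty,t]} Z_{b,v}^n Z_{a,u}^n\,\gamma(du)\gamma(dv)\quad\text{and}\quad \int_{(t,\infty)}\!\!\int_{(-\infty,t]} Z_{a,v}^n Z_{b,u}^n\,\gamma(du)\gamma(dv).
\]
Their difference is
\[
\int_{(t,\infty)}\!\!\int_{(-\infty,t]}\bigl[Z_{a,u}^n Z_{b,v}^n - Z_{a,v}^n Z_{b,u}^n\bigr]\,\gamma(du)\gamma(dv).
\]
On the domain of integration we have $u\le t<v$, so $u\le v$, and thus the hypothesis \eqref{eq:Z-log-concavity} (which holds for a.e.\ such pair) makes the integrand nonnegative $\gamma^{\otimes 2}$-a.e.; hence the integral is nonnegative. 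This establishes the rearrangement inequality above, and rearranging gives $\rho_b^n((t,\infty))\ge\rho_a^n((t,\infty))$ for every $t$, i.e., $\rho_a^n\preceq\rho_b^n$.

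There isn't really a hard step here: the only mild care required is to ensure the partition functions $Z_a^n$ and $Z_b^n$ in the denominators are finite and positive so that the division and rearrangement are legitimate — this is guaranteed on $\OfiniteBoth$, which is where we are working by hypothesis. Degenerate cases where $Z_a^n$ or $Z_b^n$ vanish do not arise since point-to-line partition functions are positive whenever the exponential integrand is, and the assumption $F\in\OfiniteBoth$ rules out the infinite case. So the proof reduces to the one-line Fubini-plus-FKG computation sketched above.
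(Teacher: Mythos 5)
Your proof is correct and takes essentially the same approach as the paper's: both clear the denominators $Z_a^n, Z_b^n$ (positive and finite on $\OfiniteBoth$), reduce to a two-dimensional integral inequality, and observe that the diagonal part where both variables lie on the same side of $t$ contributes equally to both sides (by symmetry), so only the cross region $u\le t<v$ matters, where $u\le v$ and \eqref{eq:Z-log-concavity} gives pointwise nonnegativity of the integrand. Your reformulation via $q(1-p)\ge p(1-q)\iff q\ge p$ is a slightly slicker way of isolating that cross term, but it is the same computation.
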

\begin{proof}
Fix $y\in \R.$ Using \eqref{eq:Z-log-concavity} and the symmetry of the set $\{(u,v):\ u\le y, v\le y\}$, we have
\begin{align*}
   \int_{u\le y}&Z_{a,u}^n \gamma(du) \int_{v\in\R}  Z_{b,v}^n\gamma(dv) \\ &=\int_{u\le y}\int_{v\in\R} Z_{a,u}^n Z_{b,v}^n\gamma(du)\gamma(dv) \\
    & = \int_{u\le y}\int_{v\le y} Z_{a,u}^n Z_{b,v}^n\gamma(du)\gamma(dv) + \int_{u\le y}\int_{v> y} Z_{a,u}^n Z_{b,v}^n\gamma(du)\gamma(dv)\\
    & \ge \int_{u\le y}\int_{v\le y} Z_{a,v}^n Z_{b,u}^n\gamma(du)\gamma(dv) + \int_{u\le y}\int_{v> y} Z_{a,v}^n Z_{b,u}^n\gamma(du)\gamma(dv)\\
    & = \int_{u\le y}\int_{v\in\R} Z_{a,v}^n Z_{b,u}^n\gamma(du)\gamma(dv)
    \\ 
    &= \int_{u\le y} Z_{b,u}^n \gamma(du) \int_{v\in\R}  Z_{a,v}^n\gamma(dv).
    \end{align*}
Dividing both sides by the finite, nonzero number $ \int_{v\in\R}  Z_{b,v}^n\gamma(dv)\int_{v\in\R}  Z_{a,v}^n\gamma(dv)$, we obtain
\begin{equation*}
    \rho_a^n((-\infty,y])=\frac{\int_{(-\infty,y]} Z_{a,u}^n \gamma(du)}{\int_{\R}Z_{a,v}^{n}\gamma(dv)} \ge \frac{\int_{(-\infty,y]} Z_{b,u}^n \gamma(du)}{\int_{\R}Z_{b,v}^{n}\gamma(dv)}
    =\rho_b^n((-\infty,y]),
\end{equation*}
completing the proof. 
\end{proof}

The following lemma shows that the polymer endpoint distributions are transition kernels as claimed and gives a disintegration formula.

\begin{lemma}\label{thm:endpointDisintegration} Let $m<k < n$. 
For every $F\in \OfiniteBoth$,
\[\rho_{0}^{m,n}(A) = \int_\R \rho_x^{k,n}(A) (\mu_{0}^{m,n}\pi_{k}^{-1})(dx) = (\mu_{0}^{m,n}\pi_{k}^{-1}) \rho_\cdot^{k,n} (A),\quad A\in \Bc.\]
\end{lemma}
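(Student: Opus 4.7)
The plan is to prove the identity via the standard Markov-type decomposition of the polymer Gibbs measure at the intermediate time $k$. I would start from
\[\rho_0^{m,n}(A)=\mu_0^{m,n}(\pi_n\in A) = \frac{1}{Z_0^{m,n}}\int \1_A(x_n) e^{-\sum_{j=m}^{n-1}F_j(x_j)}\mathrm{P}_0^{m,n}(dx_m,\dots,dx_n)\]
and split both the Hamiltonian as $\sum_{j=m}^{n-1}F_j(x_j)=\sum_{j=m}^{k-1}F_j(x_j)+\sum_{j=k}^{n-1}F_j(x_j)$ and the reference measure as
\[\mathrm{P}_0^{m,n}(dx_m,\dots,dx_n) = \mathrm{P}_0^{m,k}(dx_m,\dots,dx_k)\prod_{j=k}^{n-1}\lambda(d(x_{j+1}-x_j)).\]
The portion of the integral depending on $(x_k,\dots,x_n)$ equals $Z_{x_k}^{k,n}\rho_{x_k}^{k,n}(A)$ directly from the definitions~\eqref{eq:p2l-measure} and~\eqref{eq:p2l-Z}, while integrating out the initial segment $(x_m,\dots,x_{k-1})$ with $x_k=y$ held fixed yields the density $Z_{0,y}^{m,k}$ with respect to $\gamma(dy)$ by~\eqref{eq:p2pPartitionFunctionDef}. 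All integrals are finite on $\OfiniteBoth$, so Fubini applies freely.

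Applying the exact same decomposition with $\1_A(x_n)$ replaced by $\1_A(x_k)$ (so the trailing factor integrates to $Z_{x_k}^{k,n}$) gives
\[(\mu_0^{m,n}\pi_k^{-1})(dy) = \frac{Z_{0,y}^{m,k}Z_y^{k,n}}{Z_0^{m,n}}\gamma(dy).\]
Combining this with the preceding computation yields
\[\rho_0^{m,n}(A) = \frac{1}{Z_0^{m,n}}\int Z_{0,y}^{m,k} Z_y^{k,n}\rho_y^{k,n}(A)\gamma(dy) = \int\rho_y^{k,n}(A)(\mu_0^{m,n}\pi_k^{-1})(dy),\]
which is the claim. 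The only real bookkeeping item — not a serious obstacle — is verifying measurability of $y\mapsto\rho_y^{k,n}(A)$ so that the right-hand side of the lemma makes sense as a kernel integral; this follows from the joint measurability of $(y,u)\mapsto Z_{y,u}^{k,n}$, which is evident from~\eqref{eq:p2pPartitionFunctionDef}, via a standard monotone class argument applied first to $A$ of product form and then extended to all Borel sets.
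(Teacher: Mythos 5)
Your proof is correct and follows essentially the same route as the paper's: both split the Hamiltonian and reference measure at time $k$, compute the $k$-th marginal $(\mu_0^{m,n}\pi_k^{-1})(dy)=\frac{1}{Z_0^{m,n}}Z_{0,y}^{m,k}Z_y^{k,n}\gamma(dy)$, and conclude by Fubini; the paper phrases the middle step via the semigroup identity $Z_{0,y}^{m,n}=\int Z_{0,x}^{m,k}Z_{x,y}^{k,n}\gamma(dx)$ rather than your path-integral splitting, but these are the same computation.
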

\begin{proof}

For any $B\in\Bc$, 
\begin{align*}
(\mu_{0}^{m,n}\pi_{k}^{-1})(B)  = &\frac{1}{Z_0^{m,n}}\int_{\R^{k-m-1}\times B\times \R^{n-k-1}}e^{-\sum_{\ell = m}^{n-1}F_\ell(x_\ell)} \mathrm{P}_0^{m,n}(dx_m,\ldots, dx_{n})\\
= &\frac{1}{Z_0^{m,n}} \int_B Z_{0,y}^{m,k}Z_{y}^{k,n}\gamma(dy),
\end{align*}
which may be abbreviated to
\begin{align*}
(\mu_{0}^{m,n}\pi_{k}^{-1})(dy)  
= &\frac{1}{Z_0^{m,n}} Z_{0,y}^{m,k}Z_{y}^{k,n}\gamma(dy).
\end{align*}
Using this along with \eqref{eq:end-density} and Fubini's theorem we obtain, for any $A\in\Bc$:
\begin{align*}
\rho_0^{m,n}(A) & = \frac{1}{Z_{0}^{m,n}}\int_A Z_{0,y}^{m,n}\gamma(dy)\\
& =  \frac{1}{Z_{0}^{m,n}}\int_A \int_\R Z_{0,x}^{m,k} Z_{x,y}^{k,n}\gamma(dx)\gamma(dy)\\
& = \frac{1}{Z_{0}^{m,n}}\int_\R Z_{0,x}^{m,k} Z_{x}^{k,n}\int_A  \frac{Z_{x,y}^{k,n}}{Z_{x}^{k,n}}\gamma(dy)  \gamma(dx)\\
& = \frac{1}{Z_{0}^{m,n}}\int_\R Z_{0,x}^{m,k} Z_{x}^{k,n}\rho_x^{k,n}(A)\gamma(dx)\\
& = \int_\R \rho_x^{k,n}(A)(\mu_{0}^{m,n}\pi_{k}^{-1})(dx),
\end{align*}
which completes the proof.
\end{proof}

Let us now derive \Cref{thm:generalThm} from \Cref{thm:abstractLocalization}.
\begin{proof}[Proof of \Cref{thm:generalThm}]

We are going to apply \Cref{thm:abstractLocalization} to $\kappa_x^n = \rho_x^n$ 
and $\chi^n = \mu_{0}^{m,n} \pi_0^{-1},$ with an appropriately chosen $m$.  \Cref{thm:endpointDisintegration} implies that $\rho_{0}^{m,n} = \chi^n\kappa^n.$ \Cref{onePointLocalizationReq} of \Cref{thm:generalThm} means that for any $\delta > 0$ there are $K,\theta>0$ such that  localization with parameters $(\delta,K,\theta)$ holds for $(\rho_0^{m,n})_{n\ge 0}$. Thus, \Cref{localizationReq} of \Cref{thm:abstractLocalization} is verified. \Cref{monotonicityReq} of \Cref{thm:abstractLocalization} follows from \Cref{partitionRatioReq} of \Cref{thm:generalThm} combined with \Cref{thm:endpointMonotonicity}.

\Cref{positivityReq} of \Cref{thm:abstractLocalization} holds if we (i) choose $a=-r$ and $b=r$, where
$r>0$  is chosen to ensure 
$A\subset [-r,r]$ and (ii) use \Cref{positivityMarginalReq} of \Cref{thm:generalThm} to find $m>0$ such that \eqref{eq:mass_below_minus} 
and~\eqref{eq:mass_above_plus} hold.
\end{proof}

\section{Continuous Space Joint Localization}\label{continuousSpaceSection}
In this section, we prove \Cref{thm:logConcaveCorollary} and \Cref{thm:mainThm1} using \Cref{thm:generalThm}. The standing setting in the rest of this section is that of \Cref{continuousEnvironmentAssumptions} and in addition we always assume that $\stepdensity$ is log-concave with energy $V$ given by \eqref{potentialDef}.

\subsection{Proof of \Cref{thm:logConcaveCorollary}}\label{logConcaveSection}
We need to check that the conditions of \Cref{thm:generalThm} hold $\Prb$-almost surely for $G = \R.$ Due to \Cref{almostSureFinitenessCorollary},  we may restrict ourselves to the event $\OfiniteBoth$.

Since $p$ is log-concave, the set 
\begin{equation}
\label{eq:support_of_p}
E= \{x\in \R\,:\, \stepdensity(x) > 0\}
\end{equation}
is an interval. Adjusting the values of $\stepdensity$ at the endpoints of $E$ if needed, we will always assume that $\stepdensity$ is continuous on $\overline{E}$, the closure of $E$. Clearly, $Z_{a,u}^n > 0$ if and only if $u-a\in nE=\{nx\,:\, x\in E\}.$

First we check \Cref{onePointLocalizationReq} of \Cref{thm:generalThm}. Due to \Cref{thm:onePointLocalization}, this amounts to proving \Cref{logConcaveSatisfiesAssumptions}.

\begin{proof}[Proof of \Cref{logConcaveSatisfiesAssumptions}]
By Lemma 1 in \cite{10.1214/09-EJS505}, $\stepdensity$ has all finite moments and in particular satisfies \Cref{momentsReq}.

\Cref{densityBoundedReq} holds true because $V$ is a convex function satisfying $V(x) > -\infty$ for all $x\in \R$ and $\lim_{|x|\to\infty}V(x)=+\infty$.

Let us check \Cref{densityMonotonicityReq}. As $\stepdensity$ is continuous on $\overline{E},$ there is $x_0\in \overline{E}$ such that $\stepdensity$ attains its (positive) maximum at $x_0.$ Equivalently, $V$ attains its (finite) minimum at $x_0.$ There exist $L,R\in \overline{E}$ such that $x_0\in [L,R]\subset \overline{E}$ and $\sup_{x\in [L,R]}V(x) < \infty.$ Indeed, if $x_0$ is in the interior of~$E$, pick $L,R$ so that $[L,R]$ is in the interior 
of~$E$ and contains~$x_0$. If $x_0\in\{\inf E,\sup E\}$, we can 
use $V(x_0) < \infty$ to
choose a sufficiently small segment $[L,R]$ with one of the endpoints coinciding with $x_0$
 to ensure $\sup_{x\in [L,R]}V(x) < \infty$.

It follows that $\stepdensity$ is strictly positive on $[L,R].$ The convexity of $V$ implies that it is nonincreasing on $(-\infty,L]$ and nondecreasing on $[R,\infty),$  so $V = -\log \stepdensity$ is nondecreasing on $(-\infty,L]$ and nonincreasing on $[R,\infty).$ 
\end{proof}

Now we establish \Cref{partitionRatioReq} of \Cref{thm:generalThm}. First we require a lemma proving a monotonicity property of polymer measures with log-concave step distributions.

\begin{lemma}\label{stochasticDominanceLogConcave}
    For all $F\in \OfiniteBoth,$ $a\in\R$ and all $u,v\in a + nE$ satisfying $u\le v$, we have $\mu^n_{a,u}\preceq \mu^{n}_{a,v}.$
\end{lemma}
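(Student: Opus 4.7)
The plan is to apply the continuous-space Holley/FKG inequality after reducing the statement to a stochastic dominance result on the joint distribution of the interior coordinates of the polymer path.

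First I would view each measure $\mu^n_{a,u}$ as an associated probability measure $\tilde \mu_u$ on $\R^{n-1}$ (the interior coordinates $(x_1,\ldots,x_{n-1})$, with $x_0 = a$ and $x_n = u$ fixed), having unnormalized density
\[
\tilde f_u(x_1,\ldots,x_{n-1}) = e^{-\sum_{k=1}^{n-1} F_k(x_k)} \, p(x_1 - a) \prod_{k=1}^{n-2} p(x_{k+1} - x_k) \, p(u - x_{n-1}),
\]
and similarly $\tilde f_v$. Interior dominance $\tilde \mu_u \preceq \tilde \mu_v$ upgrades to the full statement $\mu^n_{a,u} \preceq \mu^n_{a,v}$ as follows: for any coordinatewise nondecreasing $\phi:\R^{n+1}\to[0,1]$, write $\int \phi \, d\mu^n_{a,u} = \int \phi(a,x,u) \, d\tilde \mu_u(x)$, use $\phi(a,x,u) \le \phi(a,x,v)$ (from monotonicity in the last coordinate together with $u\le v$), and apply the interior dominance to the monotone function $x \mapsto \phi(a,x,v)$.

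Second, I would establish $\tilde \mu_u \preceq \tilde \mu_v$ via the continuous Holley/FKG inequality, which reduces the problem to verifying the pointwise log-supermodularity condition
\[
\tilde f_u(x) \, \tilde f_v(y) \,\le\, \tilde f_u(x\wedge y) \, \tilde f_v(x\vee y), \qquad x,y\in \R^{n-1},
\]
with $\wedge, \vee$ taken coordinatewise. The environment factors cancel exactly because $\{x_k,y_k\} = \{(x\wedge y)_k, (x\vee y)_k\}$, so the inequality reduces to a product of local inequalities in the $p$-factors. Writing $V = -\log p$, each internal link produces the inequality
\[
V(x_{k+1}-x_k) + V(y_{k+1}-y_k) \ge V((x\vee y)_{k+1}-(x\vee y)_k) + V((x\wedge y)_{k+1}-(x\wedge y)_k),
\]
and the endpoint factor $p(u-x_{n-1})p(v-y_{n-1})$ contributes the analogous inequality involving $u$ and $v$.

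Each such local inequality is a trivial equality when the relevant coordinates do not ``cross''. In the crossing case (e.g.\ $x_k \le y_k$ but $x_{k+1} > y_{k+1}$), setting $\delta$ equal to the crossing gap reduces the inequality to the chord-slope monotonicity $V(r'+\delta) - V(r') \ge V(r+\delta) - V(r)$ for $r \le r'$, which is immediate from convexity of $V$; the endpoint case is analogous, with the hypothesis $u \le v$ supplying the required ordering. The one subtlety — which I expect to be the only mildly delicate step — is managing the support of $p$: since $E = \{p > 0\}$ is an interval (by log-concavity of $p$), whenever $\tilde f_u(x)\tilde f_v(y) > 0$ the arguments of the swapped $p$-factors lie in the convex hull of the original arguments and hence remain in $E$, so the finite-valued convex chord-slope inequality applies; and when $\tilde f_u(x)\tilde f_v(y) = 0$ the FKG condition is vacuous. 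Invoking Holley's inequality delivers $\tilde \mu_u \preceq \tilde \mu_v$ and hence $\mu^n_{a,u} \preceq \mu^n_{a,v}$.
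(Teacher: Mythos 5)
Your proposal is correct and takes a genuinely different route from the paper, though both ultimately rest on the same log-supermodularity of $p$ (equivalently, convexity of $V=-\log p$). The paper proves the dominance by a downward induction on the time index: it first establishes a one-dimensional ``convolution'' lemma (Lemma~\ref{stochasticDominance_for_Convolutions}) asserting that if $z\le z'$ then the normalized measure $\propto p(z-x)\nu(dx)$ is stochastically dominated by $\propto p(z'-x)\nu(dx)$, and then peels off coordinates one at a time using disintegration and this one-step MLR-type monotonicity. Your approach packages the whole argument as a single application of the continuous Holley inequality on $\R^{n-1}$, after conditioning on the two fixed endpoints. The local log-supermodularity checks you carry out are exactly the inequality \eqref{stepDensityPathCrossing} of the paper applied link by link, so the arithmetic content is the same. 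What each approach buys: Holley is cleaner and more conceptual, and your verification that the swapped increments remain in the support interval $E$ (so that the finite-valued chord-slope inequality applies, and $\tilde f_u(x)\tilde f_v(y)>0$ forces $\tilde f_u(x\wedge y)\tilde f_v(x\vee y)>0$) correctly addresses the support subtlety. The paper's induction is longer but entirely self-contained, and its Lemma~\ref{stochasticDominance_for_Convolutions} is reused later in the verification of \Cref{partitionRatioReq}.

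The one point you should pin down more carefully is the precise version of Holley's inequality you are invoking. Many textbook statements of Holley (both in the finite and the continuous case) assume strictly positive densities, because the standard Markov chain/Glauber-dynamics proof needs irreducibility. Your densities can vanish, since $E$ can be a bounded interval. You correctly note that the Holley condition $\tilde f_u(x)\tilde f_v(y)\le \tilde f_u(x\wedge y)\tilde f_v(x\vee y)$ is trivially true when the left side vanishes, so the hypothesis of Holley is met; but you still need a version of the theorem whose conclusion holds without strict positivity. This is true (and can be proved, e.g., by the same coordinate-by-coordinate induction the paper carries out explicitly, or by smoothing $p$ with a small Gaussian, which preserves log-concavity and strict positivity, and passing to the limit), but it deserves an explicit citation or a line of justification. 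The paper's inductive argument avoids this issue altogether, since the one-dimensional comparison in Lemma~\ref{stochasticDominance_for_Convolutions} is proved directly from \eqref{stepDensityPathCrossing} and handles the zero-density regions without extra assumptions.
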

Our proof is an extension of the argument used for Lemma 7.3 in \cite{Bakhtin-Li:MR3911894}, a specific case of our lemma, with $\stepdensity(x) = \frac{1}{\sqrt{2\pi}}e^{-x^2/2}$. 
It relies on the following fact.
\begin{lemma}\label{stochasticDominance_for_Convolutions}
       Let $\nu$ be a positive measure on $\R$ and let 
    \begin{equation}
        A=\bigg\{z\in\R: \ 0 < \int_{\R} \stepdensity (z-x)\nu(dx) <\infty\bigg\}\label{nonDegenerateNuCondition}.
    \end{equation}
Then, for all $z,z'\in A$ satisfying $z\le z'$, we have
    \begin{equation}
        \frac{\int_{(-\infty,y]} \stepdensity (z-x)\nu(dx)}{\int_{\R} \stepdensity (z-x)\nu(dx)} \ge \frac{\int_{(-\infty,y]} \stepdensity (z'-x)\nu(dx)}{\int_{\R} \stepdensity (z'-x)\nu(dx)}.\label{monotoneMarginalResult}
    \end{equation}
\end{lemma}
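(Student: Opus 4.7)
The idea is to reduce \eqref{monotoneMarginalResult} to a pointwise FKG-type inequality coming from log-concavity of $\stepdensity$. Since both denominators are positive and finite for $z,z'\in A$, the inequality \eqref{monotoneMarginalResult} is equivalent to
\[
\int_{(-\infty,y]} \stepdensity(z-x)\nu(dx)\cdot \int_{\R}\stepdensity(z'-x)\nu(dx) \ge \int_{(-\infty,y]} \stepdensity(z'-x)\nu(dx)\cdot \int_{\R}\stepdensity(z-x)\nu(dx).
\]
Splitting each integral over $\R$ as the sum over $(-\infty,y]$ and $(y,\infty)$, subtracting the common product $\int_{(-\infty,y]}\stepdensity(z-x)\nu(dx)\cdot\int_{(-\infty,y]}\stepdensity(z'-x)\nu(dx)$ from both sides, and applying Fubini, the claim is equivalent to
\[
\int_{(-\infty,y]}\int_{(y,\infty)} \bigl[\stepdensity(z-x)\stepdensity(z'-x') - \stepdensity(z'-x)\stepdensity(z-x')\bigr]\,\nu(dx)\,\nu(dx') \ge 0.
\]

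The key step is then the pointwise inequality
\[
\stepdensity(z-x)\stepdensity(z'-x') \ge \stepdensity(z'-x)\stepdensity(z-x'), \qquad z\le z',\ x\le x',
\]
which is the classical one-dimensional FKG inequality for a log-concave density. To prove it, set $a=z-x'$, $b=z-x$, $c=z'-x'$, $d=z'-x$; one checks that $a+d=b+c$ and $a\le \min(b,c)\le \max(b,c)\le d$. Writing $b$ and $c$ as convex combinations of $a$ and $d$ with conjugate weights and applying concavity of $\log\stepdensity$ twice yields $\log\stepdensity(b)+\log\stepdensity(c)\ge \log\stepdensity(a)+\log\stepdensity(d)$; exponentiating gives the inequality. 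If $\stepdensity(a)$ or $\stepdensity(d)$ vanishes, the right-hand side is zero and the inequality is automatic.

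I expect no substantial obstacle; the only mild care required is checking the edge case where $\stepdensity$ vanishes at some of the four points (handled by the remark above, since the support of a log-concave density is an interval so $b,c$ lie in the support whenever $a,d$ do). Combining the pointwise FKG inequality with the double-integral reformulation completes the proof.
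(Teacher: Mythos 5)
Your proof is correct and follows essentially the same route as the paper's: both reduce \eqref{monotoneMarginalResult} to the pointwise FKG-type inequality $\stepdensity(z-x)\stepdensity(z'-x')\ge\stepdensity(z'-x)\stepdensity(z-x')$ for $z\le z'$, $x\le x'$, and then integrate it over the region $x\le y<x'$ (the paper does this by noting the $\{x\le y,x'\le y\}$ part is symmetric, which is the same as your cancellation of the common product). The only cosmetic difference is that the paper verifies the pointwise step via the monotonicity of difference quotients of the convex function $V=-\log\stepdensity$, whereas you use the equivalent convex-combination phrasing of midpoint concavity with conjugate weights.
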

We derive \Cref{stochasticDominanceLogConcave} from \Cref{stochasticDominance_for_Convolutions} first. 

\begin{proof}[Proof of \Cref{stochasticDominanceLogConcave} ] We prove the lemma only for $a = 0$ since the proof is almost identical for arbitrary $a$. Let $\pi_{k,n}$ denote the projection of a path $(x_0,x_1,\dots)$ to the coordinates $k$ through $n$, an element of $\R^{n-k+1}.$ We will prove
    \begin{equation}
        \mu_{0,z}^n\pi_{k,n}^{-1} \preceq \mu_{0,z'}^n\pi_{k,n}^{-1},\qquad  \forall z,z'\in nE,\,z\le z',\label{inductiveClaim}
    \end{equation}
    for all $k \in \{n,n-1,\dots, 0\}$ using induction. The statement of the lemma is \eqref{inductiveClaim} with $k = 0$. Note that~\eqref{inductiveClaim} with $k=n$ is trivially true because $\mu_{0,x}^n\pi_{n,n}^{-1} = \delta_x$ for all $x\in nE$, and $\delta_z\preceq \delta_{z'}$ for all $z,z'\in \R$ with $z\le z'$.

  Suppose that \eqref{inductiveClaim} holds for some $k\in \{n,\dots,1\}.$ Let $f:\R^{n-k+2}\to [0,1]$  be a bounded monotone function. 
  
 Denoting  $x=(x_{k-1},\dots,x_n)\in \R^{n-k+2}$,
  disintegrating $\mu_{0,z}^n\pi_{k-1,n}^{-1}$,  and using Fubini's theorem, we obtain 
    \begin{align}
    \notag
        &\int_{\R^{n-k+2}} f(x)\mu_{0,z}^n\pi_{k-1,n}^{-1}(dx)  \\
        & = \int_{\R^{n-k+2}} f(x_{k-1},\dots, x_n)\mu_{0,z}^n\pi_{k,n}^{-1}(dx_k,\dots,dx_n) \mu_{0,x_k}^k\pi_{k-1}^{-1}(dx_{k-1})
        \label{kMinus1Disintegration}
        \\
        \notag
        &=\int_{\R^{n-k+1}} \bar{f}(x_k,\dots,x_n) \mu_{0,z}^n\pi_{k,n}^{-1}(dx_k,\dots,dx_n),
    \end{align}
    where 
    \begin{equation}
        \bar{f}(x_k,\dots,x_n) = \int_\R f(x_{k-1},\dots, x_n)\mu_{0,x_k}^k\pi_{k-1}^{-1}(dx_{k-1}).\label{defOfBarFGood}
    \end{equation}
    It will be convenient to redefine $\bar{f}$ on the set of $(x_{k},\dots, x_n)$ for which $\mu_{0,x_k}^k$ is the zero measure.
    Let 
    \[S = \{(x_k,\dots, x_n)\in \R^{n-k+1}\,:\,x_k\in kE\}.\]
    For $x\in S$ define $\bar{f}(x)$ by \eqref{defOfBarFGood}. For $x\notin S$ define
    \begin{equation}
        \bar{f}(x) = 
        \begin{cases}
            \sup\{\bar{f}(y)\,:\,y\in S,\,y\preceq x\}, & \exists y\in S\text{ s.t.\ }y\preceq x\\
            0,& \text{otherwise.}
        \end{cases}\label{defOfBarFBad}
    \end{equation}
    If $\bar{f}$ is monotone on $S$ then it is easy to check using \eqref{defOfBarFBad} that $\bar{f}$ is monotone on all of $\R^{n-k+1}.$

    We will now show that $\bar{f}$ is monotone on $S$. For every $x,x'\in S$ with $x\preceq x',$
    \begin{equation}
        \int_\R f(x_{k-1},x_k,\dots, x_n)\mu_{0,x_k}^k\pi_{k-1}^{-1}(dx_{k-1})\le \int_\R f(x_{k-1},x_k',\dots, x_n')\mu_{0,x_k}^k\pi_{k-1}^{-1}(dx_{k-1})\label{integrandNondecreasing}
    \end{equation} 
   due to monotonicity of $f$. Now let 
    \[\nu(dx) = Z_{0,x}^{0,k-1}dx.\]
    Then, for all $w\in kE$, 
    \begin{equation}
    \label{eq:proj_on_k-1}
    \mu_{0,w}^k\pi_{k-1}^{-1}(dx) = \frac{\stepdensity(w-x)\nu(dx)}{\int_\R \stepdensity(w-x')\nu(dx')}.
    \end{equation}
    Since $F\in \OfiniteBoth$, \eqref{eq:proj_on_k-1} defines a well-defined probability measure for all $w\in kE.$
     \Cref{stochasticDominance_for_Convolutions} and \eqref{eq:proj_on_k-1} imply that for all $w,w'\in kE$ satisfying $w\le w'$,
    \begin{equation}
        \mu_{0,w}^k\pi_{k-1}^{-1} \preceq \mu_{0,w'}^k\pi_{k-1}^{-1}.\label{kMinus1MarginalMonotonicity}
    \end{equation}
    Since for every $(x_k,x_{k+1},\dots, x_n)\in \R^{n-k+1}$ the map $x_{k-1}\mapsto f(x_{k-1},x_k,\dots, x_n)$ is monotone, \eqref{kMinus1MarginalMonotonicity} implies that for all $x'\in \R^{n-k+1}$ and all $x_k\le x_k'$,
    \begin{equation}
        \int_\R f(x_{k-1},x_k',\dots, x_n')\mu_{0,x_k}^k\pi_{k-1}^{-1}(dx_{k-1}) \le \int_\R f(x_{k-1},x_k',\dots, x_n')\mu_{0,x_k'}^k\pi_{k-1}^{-1}(dx_{k-1}).\label{onePointMonotonicity}
    \end{equation}
Inequalities \eqref{integrandNondecreasing} and \eqref{onePointMonotonicity} imply that for all $x,x'\in S$ with $x\preceq x',$
\begin{align}
    \bar{f}(x) &= \int_\R f(x_{k-1},x_k,\dots, x_n)\mu_{0,x_k}^k\pi_{k-1}^{-1}(dx_{k-1})\notag\\
    &\le \int_\R f(x_{k-1},x_k',\dots, x_n')\mu_{0,x_k'}^k\pi_{k-1}^{-1}(dx_{k-1})\notag \\
    &= \bar{f}(x'),\notag
\end{align}
so $\bar{f}$ is monotone on $S$.

The inductive assumption that \eqref{inductiveClaim} holds for $k$, equality \eqref{kMinus1Disintegration}, and the fact that~$\bar{f}$ is monotone imply that the left-hand side of \eqref{kMinus1Disintegration} is monotone  in $z\in nE.$ As this statement holds for every monotone function $f:\R^{n-k+2}\to [0,1]$, we conclude that \eqref{inductiveClaim} holds true for $k-1$, which completes the induction step. The proof of 
\Cref{stochasticDominanceLogConcave} will be complete once we  prove \Cref{stochasticDominance_for_Convolutions}. 
\end{proof}

 \begin{proof}[Proof of \Cref{stochasticDominance_for_Convolutions}] 
Let us  first show that if $x,x',z,z'\in \R$ satisfy $x\le x'$ and $z\le z'$, then
    \begin{equation}
        \stepdensity (z-x)\stepdensity (z'-x') \ge \stepdensity (z'-x)\stepdensity (z-x').\label{stepDensityPathCrossing}
    \end{equation}
We can assume $\stepdensity(z'-x) > 0$, because otherwise \eqref{stepDensityPathCrossing} is trivially satisfied. In this case, $z'-x\in E.$ If $\stepdensity(z-x) = 0$, then $z-x\notin E$ and thus, since $E$ is convex, we must also have $z-x'\notin E$, due to our assumptions on $x,x',z,z'$. This implies $\stepdensity(z-x') = 0$,  so \eqref{stepDensityPathCrossing} is satisfied. We now consider the case where $\stepdensity(z'-x) > 0$ and $\stepdensity(z-x) > 0$, i.e.\ $V(z'-x),V(z-x)<\infty$, where $V$ is the convex function defined in \eqref{potentialDef}.

 We have $z-x'\le z-x$, $x'-x\ge 0,$ and $z'-z\ge 0.$ Since the difference quotient of a convex function is an increasing function, we have 
    \begin{align}
        \frac{V(z-x) - V(z-x')}{x'-x} &\le \frac{V(z-x + (z'-z)) - V(z-x'+(z'-z))}{x'-x}\notag \\
        & = \frac{V(z'-x) - V(z'-x')}{x'-x}.\label{differenceQuotientInequality}
    \end{align}
    Multiplying both sides of \eqref{differenceQuotientInequality} by $x'-x$ and rearranging, we obtain 
    \[-V(z-x) - V(z'-x') \ge -V(z'-x) - V(z-x').\]
    Taking the exponential of both sides, we obtain~\eqref{stepDensityPathCrossing}. 

    Now we can use \eqref{stepDensityPathCrossing} to  write
    \begin{align*}
        \int_{(-\infty,y]}& \stepdensity (z-x) \nu(dx)\int_{\R}\stepdensity (z'-x') \nu(dx')\\
        & =\int_{x\le y}\int_{x'\in \R} \stepdensity (z-x)\stepdensity (z'-x') \nu(dx')\nu(dx) \\
        & = \int_{x\le y}\int_{x'\le y} \stepdensity (z-x)\stepdensity (z'-x') \nu(dx')\nu(dx) + \int_{x\le y}\int_{x'> y} \stepdensity (z-x)\stepdensity (z'-x') \nu(dx')\nu(dx)\\
        & \ge \int_{x\le y}\int_{x'\le y} \stepdensity (z-x')\stepdensity (z'-x) \nu(dx')\nu(dx) + \int_{x\le y}\int_{x'> y} \stepdensity (z'-x)\stepdensity (z- x') \nu(dx')\nu(dx)\\
        & = \int_{x\le y}\int_{x'\in \R} \stepdensity (z'-x)\stepdensity (z-x') \nu(dx')\nu(dx)\\
        & = \int_{(-\infty,y]}\stepdensity (z'-x) \nu(dx)\int_{\R}\stepdensity (z-x') \nu(dx').
    \end{align*}
   Dividing both sides by $\int_{\R}\stepdensity (z'-x') \nu(dx')\int_{\R}\stepdensity (z-x') \nu(dx')$ we obtain  \eqref{monotoneMarginalResult}.  
\end{proof}

\begin{proof}[Proof of \Cref{partitionRatioReq} of \Cref{thm:generalThm}] 
We assume that $F\in \OfiniteBoth$. If $Z_{b,u}^n=0$, then \eqref{eq:Z-log-concavity} holds trivially, so let us assume that $Z_{b,u}^{n} > 0$, or, equivalently, recalling the definition of $E$ in \eqref{eq:support_of_p}, $u-b \in nE$.

First, consider the case $Z_{b,v}^n=0$, i.e., $v-b\notin nE$. Since $v-a > v-b$ and $n E$ is convex, it follows that $v-a\notin nE$ and so $Z_{a,v}^n = 0.$ As a result, if $Z_{b,v}^n=0$ and $Z_{b,u}^{n} > 0$ then $Z_{a,v}^n = 0$ and so \eqref{eq:Z-log-concavity} is satisfied. 

We now consider the case $Z_{b,v}^n> 0.$ By disintegrating the point-to-point partition function in the first coordinate we have
\begin{align}
    \frac{Z_{a,u}^n}{Z_{b,u}^n} &= \frac{e^{-F_0(a)}\int_\R \stepdensity (x-a)Z_{x,u}^{1,n} dx}{Z_{b,u}^n}\notag\\
    & = \frac{e^{-F_0(a)+F_0(b)}\int_\R \frac{\stepdensity (x-a)}{\stepdensity (x-b)}\stepdensity (x-b)Z_{x,u}^{1,n}e^{-F_0(b)}dx}{Z_{b,u}^n}\notag\\
    & = e^{-F_0(a)+F_0(b)}\int_\R \frac{\stepdensity (x-a)}{\stepdensity (x-b)}\mu_{b,u}^n\pi_1^{-1}(dx).\label{ratioOfPartitionFunctionTrick}
\end{align}
Note that for $\mu_{b,u}^n\pi_1^{-1}$-a.e.~$x$, we have $p(x-b) >0$,  hence the right-hand side of~\eqref{ratioOfPartitionFunctionTrick} is well-defined. Equation \eqref{stepDensityPathCrossing} shows that for $a,b\in \R$ satisfying $a\le b$, the map $y\mapsto \frac{\stepdensity (y-a)}{\stepdensity (y-b)}$ is nonincreasing on the set $\{y\in \R\,:\,\stepdensity(y-b)> 0\}$, which has full measure under $\mu_{b,u}^n\pi_1^{-1}$ and $\mu_{b,v}^n\pi_1^{-1}$. Since $Z_{b,u}^n> 0$ and $Z_{b,v}^n>0$, \Cref{stochasticDominanceLogConcave} implies that $\mu_{b,u}^n\preceq \mu_{b,v}^n.$ Therefore,
\begin{equation*}
    e^{-F_0(a)+F_0(b)}\int_\R \frac{\stepdensity (x-a)}{\stepdensity (x-b)}\mu_{b,u}^n\pi_1^{-1}(dx) \ge e^{-F_0(a)+F_0(b)}\int_\R \frac{\stepdensity (x-a)}{\stepdensity (x-b)}\mu_{b,v}^n\pi_1^{-1}(dx).
\end{equation*} 
Under the assumption $Z_{b,v}^n> 0$, the same computation as in \eqref{ratioOfPartitionFunctionTrick} shows that the right-hand side
 is $\frac{Z_{a,v}^n}{Z_{b,v}^n}$, and the lemma follows.
\end{proof}

\Cref{thm:logConcaveCorollary} then follows because we have proven that if the environment satisfies \Cref{continuousEnvironmentAssumptions} and $\stepdensity$ is log-concave, then \Cref{onePointLocalizationReq,partitionRatioReq} of \Cref{thm:generalThm} hold $\Prb$-almost surely.

\subsection{Proof of \Cref{thm:mainThm1}}\label{sec:proof_under_A}
In this section, we prove \Cref{thm:mainThm1} on joint localization for GRW polymers under \Cref{continuousEnvironmentAssumptions}.
In this case, $\stepdensity$ is Gaussian, hence log-concave, so to 
apply \Cref{thm:logConcaveCorollary} it remains to prove that \Cref{positivityMarginalReq} of \Cref{thm:generalThm} holds $\Prb$-almost surely.

We need to know that the marginals of finite-dimensional point-to-line polymer measures have uniformly positive density with respect to the random walk reference measure. For GRW polymers under \Cref{continuousEnvironmentAssumptions}, one can actually prove a stronger result,
convergence of densities to a positive limit, uniform on compact sets. Expressing densities of marginals of polymer measures via partition functions, one can see that convergence of those densities is tightly related to convergence 
of ratios of certain partition functions. The latter convergence is a direct corollary of  Theorem 3.2  of \cite{Bakhtin-Li:MR3911894}, one of the main results of  
that paper describing the basins of attraction for global solutions of the Burgers equation with random kick forcing:

\begin{theorem}[\cite{Bakhtin-Li:MR3911894}] 
\label{thm:partitionRatiosGaussian}
Let $p=g$ under \Cref{continuousEnvironmentAssumptions}.
Then, with $\Prb$-probability~$1$,
 for every $x,y\in \R,$ the sequence $(Z_x^n / Z_y^n)_{n\in \N}$ converges uniformly on compact sets to a $C^1(\R\times \R)$ function $v(x,y)$. The function $v$ satisfies
\begin{align*}
0  < v(x,y)&  < \infty,\\
\log v(x,y)& = o(|x|),\quad x\to \pm\infty,\\
\log v(x,y)& = o(|y|),\quad y\to \pm \infty.
\end{align*}
\end{theorem}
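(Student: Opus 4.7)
The plan is to construct $v(x,y)$ as the exponential of a Busemann function for the zero-slope thermodynamic limit, following the strategy of \cite{Bakhtin-Li:MR3911894}. I would proceed in three stages: pointwise convergence, uniform $C^1$ convergence, and the qualitative bounds.

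First, to establish pointwise convergence of $Z_x^n/Z_y^n$, I would disintegrate on the first step, as in \eqref{ratioOfPartitionFunctionTrick}, to obtain
\[
\frac{Z_x^n}{Z_y^n} = e^{-F_0(x)+F_0(y)} \int_\R \frac{g(z-x)}{g(z-y)}\, \mu_y^n \pi_1^{-1}(dz).
\]
Since $g(z-x)/g(z-y) = \exp\bigl((x-y)z - (x^2-y^2)/2\bigr)$ is a simple exponential in $z$, convergence of the right-hand side reduces to convergence of the Laplace transform of the first-step marginal $\mu_y^n \pi_1^{-1}$. Tightness of this marginal follows from the exponential moment estimates in \Cref{continuousEnvironmentAssumptions} combined with the monotonicity machinery for log-concave reference walks already developed in \Cref{stochasticDominanceLogConcave}. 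Identifying the actual limit, however, requires uniqueness of the zero-slope infinite-volume polymer measure, which is the central technical input.

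Second, for uniform $C^1$ convergence on compact subsets of $\R \times \R$, I would use the $C^1$ regularity of $F_k$ and of $g$ from \Cref{continuousEnvironmentAssumptions} to compute
\[
\partial_x \log Z_x^n = -F_0'(x) + \E_{\mu_x^n}[X_1 - x],
\]
and bound the right-hand side uniformly in $n$ for $x$ in a compact set by invoking tightness estimates for the first-step marginals together with local integrability of $F_0'$. Pointwise convergence combined with equicontinuity of the derivatives and Arzel\`a--Ascoli then yields uniform $C^1$ convergence.

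Third, finiteness and positivity of $v$ are immediate from positivity of partition functions on $\OfiniteBoth$ (\Cref{almostSureFinitenessCorollary}) together with the construction above. The sublinear growth $\log v(x,y) = o(|x|)$ reflects the zero-asymptotic-slope character of the limit: by comparing the polymer started at $x$ with a spatially shifted polymer started at $0$, the discrepancy is controlled by a subdiffusive term negligible relative to $|x|$. The main obstacle is the uniqueness of the zero-slope infinite-volume polymer measure, since the environment has finite dependence range only in space and polymers carry long-range correlations in time; in \cite{Bakhtin-Li:MR3911894} this is resolved by a monotone coupling argument that exploits log-concavity of $g$ (cf.\ \Cref{stochasticDominanceLogConcave}) to show that polymers started at different points merge asymptotically, which forces all subsequential limits of $Z_x^n/Z_y^n$ to coincide.
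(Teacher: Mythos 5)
The paper does not prove this theorem: it imports it verbatim from \cite{Bakhtin-Li:MR3911894} (the text immediately before the statement calls it ``a direct corollary of Theorem 3.2 of \cite{Bakhtin-Li:MR3911894}''), so there is no internal proof for your reconstruction to be compared against. Your sketch is therefore best read as a summary of the cited paper's strategy rather than as a proof one could check against the present source.

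Taken on those terms, your outline is sensible in broad strokes, and you correctly identify that the load-bearing ingredient is existence and uniqueness of the zero-slope infinite-volume polymer measure, which is exactly where the monotone-coupling/merging argument (in the spirit of \Cref{stochasticDominanceLogConcave}) enters in \cite{Bakhtin-Li:MR3911894}. However, two of your intermediate steps elide real difficulties. In the first step, the reduction to ``convergence of the Laplace transform of the first-step marginal'' is correct, but tightness of $\mu_y^n\pi_1^{-1}$ plus uniqueness of the limiting measure only gives weak convergence; the integrand $g(z-x)/g(z-y)=\exp\bigl((x-y)z-(x^2-y^2)/2\bigr)$ is exponentially growing in $z$, so one additionally needs uniform-in-$n$ exponential integrability of the first-step marginal, i.e., quantitative tail estimates, not just tightness. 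In \cite{Bakhtin-Li:MR3911894} this is supplied by concentration estimates on polymer paths, and it is not automatic from \Cref{continuousEnvironmentAssumptions}. In the second step, the identity $\partial_x\log Z_x^n=-F_0'(x)+\E_{\mu_x^n}[X_1-x]$ is correct (using $g'(z)=-zg(z)$), but uniform boundedness of the right-hand side on compacts again requires uniform exponential moment control of $\mu_x^n\pi_1^{-1}$; ``tightness'' alone does not give a uniform bound on $\E_{\mu_x^n}|X_1-x|$. Neither issue is a conceptual error, but a careful writeup would need to state and use the stronger tail estimate explicitly. As for the sublinear growth $\log v(x,y)=o(|x|)$, your heuristic (zero asymptotic slope, subdiffusive fluctuations) is the right intuition, though it is the hardest part to make rigorous and in \cite{Bakhtin-Li:MR3911894} it rests on the shape theorem for the polymer free energy plus monotonicity, which you do not touch.
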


In fact, the conditions under which \Cref{thm:positiveMassGaussian} holds are weaker than \Cref{continuousEnvironmentAssumptions} and in particular positive correlation is not required.

For our purposes it is more convenient to use an intermediate result  
proved in~\cite{Bakhtin-Li:MR3911894} as a part of the argument for Theorem 3.2:
\begin{theorem}[\cite{Bakhtin-Li:MR3911894}]\label{thm:positiveMassGaussian}
Let $p=g$ under \Cref{continuousEnvironmentAssumptions}. Then  there is a probability one event $\bar{\Omega}\subset \Omega$ such that for every $F\in \bar{\Omega}$ the following holds. For any $m\in \Z$, $x\in \R,$ there is a measure $\mu_x^{m,\infty}$ on the space of paths $\gamma:\{m,m+1\dots,\}\to \R$ such that for any $k> m,$ the sequence $(\mu_x^{m,n} \pi_k^{-1})_{n=m}^\infty$ converges in distribution to $\mu_x^{m,\infty} \pi_k^{-1}.$ Further, the marginals of $\mu_x^{m,\infty}$ are absolutely continuous with respect to Lebesgue measure, with everywhere positive density.
\end{theorem}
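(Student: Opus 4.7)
The plan is to construct $\mu_x^{m,\infty}$ as the projective limit of a consistent family of finite-dimensional distributions built from the ratio limit $v$ supplied by \Cref{thm:partitionRatiosGaussian}, and then verify weak convergence of the marginals of $\mu_x^{m,n}$ to those candidate limits. I work on a probability-one event $\bar\Omega \subset \OfiniteBoth$ on which the conclusions of \Cref{thm:partitionRatiosGaussian} hold after every integer time shift; such an event exists by the $\Prb$-invariance of the shifts $\theta^{n,0}$ and a countable intersection. On $\bar\Omega$, for every $k\in\Z$, the ratio
\begin{equation*}
r_n^k(y):=\frac{Z_y^{k,n}}{Z_0^{k,n}}
\end{equation*}
converges as $n\to\infty$, uniformly on compact sets in $y$, to a strictly positive continuous function $\tilde v^k(y)$ with $\log \tilde v^k(y)=o(|y|)$.

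Starting from the disintegration
\begin{equation*}
\mu_x^{m,n}\pi_k^{-1}(dy) = \frac{Z_{x,y}^{m,k}\,Z_y^{k,n}}{Z_x^{m,n}}\,dy = \frac{Z_{x,y}^{m,k}\,r_n^k(y)}{\int Z_{x,y'}^{m,k}\,r_n^k(y')\,dy'}\,dy,\qquad m<k<n,
\end{equation*}
the natural candidate for the limiting marginal is
\begin{equation*}
\nu_x^{m,k}(dy) := \frac{Z_{x,y}^{m,k}\,\tilde v^k(y)}{\int Z_{x,y'}^{m,k}\,\tilde v^k(y')\,dy'}\,dy.
\end{equation*}
Because $\stepdensity=g$ is the standard Gaussian density, for any fixed realization of $F$ the factor $Z_{x,y}^{m,k}$ has Gaussian tail in $y$ with variance $k-m$ (the $F$-dependent weights $e^{-\sum F_j}$ only contribute a bounded random constant after integrating the intermediate coordinates), so against the $o(|y|)$-log growth of $\tilde v^k$ the normalizing integral is finite and positive. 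Weak convergence $\mu_x^{m,n}\pi_k^{-1}\Rightarrow \nu_x^{m,k}$ will follow from $L^1$-convergence of densities via Scheff\'e's lemma once a uniform-in-$n$ integrable majorant is in hand; pointwise convergence is immediate from the uniform-on-compacts convergence of $r_n^k$.

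The principal obstacle is producing that majorant: a deterministic function $h(y)$, integrable against the Gaussian $Z_{x,y}^{m,k}$, such that $r_n^k(y)\le h(y)$ for all large $n$ and all $y\in\R$. Equicontinuity of $\{\log r_n^k\}_n$ on compacts (built into the uniform convergence) controls bounded regions, while for the tails one needs sub-quadratic growth of $r_n^k$ uniform in $n$. I would extract this by exploiting the Gaussian shear identity: translating the reference walk by $y$ expresses $Z_y^{k,n}/Z_0^{k,n}$ as a product of an exponential martingale in the step increments and a reweighted environment partition function, and the quadratic exponent of $g$ yields tail domination of the form $\exp(Cy^2/n)$, combined with a sub-Gaussian concentration estimate for the free energy (available from the moment bound \ref{exponentialMomentAssumption} and the very strong disorder fluctuation estimates of \cite{Bakhtin-Li:MR3911894}) to upgrade to a bound uniform in $n$.

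Finally, to assemble finite-dimensional distributions: for $m<k_1<\cdots<k_r$ set, with $k_0:=m$ and $y_0:=x$,
\begin{equation*}
\nu_x^{m;k_1,\ldots,k_r}(dy_1\cdots dy_r) \propto \prod_{i=0}^{r-1} Z_{y_i,y_{i+1}}^{k_i,k_{i+1}}\,\tilde v^{k_r}(y_r)\,dy_1\cdots dy_r.
\end{equation*}
Consistency under removal of $k_r$ reduces to the identity
\begin{equation*}
\int Z_{y_{r-1},y_r}^{k_{r-1},k_r}\,\tilde v^{k_r}(y_r)\,dy_r = \tilde v^{k_{r-1}}(y_{r-1}),
\end{equation*}
which is the $n\to\infty$ limit of the exact relation $\int Z_{y_{r-1},y_r}^{k_{r-1},k_r}Z_{y_r}^{k_r,n}\,dy_r = Z_{y_{r-1}}^{k_{r-1},n}$ after dividing by $Z_0^{k_{r-1},n}$; the passage to the limit uses exactly the $L^1$-convergence established in the previous step. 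Consistency under removal of intermediate times is automatic from Fubini. Kolmogorov's extension theorem then yields $\mu_x^{m,\infty}$ on $\R^{\{m,m+1,\ldots\}}$, and strict positivity of its marginal densities is inherited from the strict positivity of $v$ and of $Z_{x,y}^{m,k}$ under the Gaussian reference walk.
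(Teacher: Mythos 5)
The paper does not prove this statement: it is quoted verbatim as an intermediate result established in \cite{Bakhtin-Li:MR3911894} in the course of proving their Theorem 3.2, so there is no internal proof to compare against. Judged on its own terms, your reconstruction is a sensible outline --- the disintegration $\mu_x^{m,n}\pi_k^{-1}(dy)\propto Z_{x,y}^{m,k}\,r_n^k(y)\,dy$, the candidate limit built from $\tilde v^k$, and the Kolmogorov-extension assembly are all the right skeleton --- but it contains a genuine gap exactly where you flag ``the principal obstacle.'' Theorem \ref{thm:partitionRatiosGaussian} controls only the \emph{limit} function $v$ (via $\log v(x,y)=o(|x|)$) together with locally uniform convergence; it gives no uniform-in-$n$ tail bound on the prelimit ratios $r_n^k(y)$ for $|y|$ large. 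The ``Gaussian shear identity plus sub-Gaussian concentration of the free energy'' you invoke to produce the dominating function is not a routine consequence of anything stated in this paper: obtaining uniform-in-$n$ sub-quadratic (in fact, the needed bound must beat the Gaussian weight $e^{-y^2/(2(k-m))}$ of $Z_{x,y}^{m,k}$) control of $Z_y^{k,n}/Z_0^{k,n}$ is precisely the hard analytic content of the Bakhtin--Li argument, so as written your proof defers its central step back to the very source being reconstructed. The same missing majorant is needed again for your consistency identity $\int Z_{y_{r-1},y_r}^{k_{r-1},k_r}\tilde v^{k_r}(y_r)\,dy_r=\tilde v^{k_{r-1}}(y_{r-1})$, so the gap propagates through the whole construction.

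A secondary inaccuracy: the claim that ``the $F$-dependent weights only contribute a bounded random constant'' so that $Z_{x,y}^{m,k}$ has a clean Gaussian tail in $y$ is not justified under \Cref{continuousEnvironmentAssumptions}. The potential is unbounded; \Cref{exponentialMomentAssumption} controls exponential moments of $F_0(0)$ and of the local supremum $F_0^*$, which bounds $e^{-F}$ from \emph{below} along the path but not from above uniformly in space. One can still show the normalizing integral $\int Z_{x,y'}^{m,k}\tilde v^k(y')\,dy'$ is a.s.\ finite (a Borel--Cantelli or Fubini argument gives at worst polynomial corrections to the Gaussian decay, which the $o(|y|)$ growth of $\log\tilde v^k$ cannot overcome), but this needs an actual argument rather than the assertion of a bounded constant. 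The positivity of the limiting marginal densities, by contrast, is correctly and easily obtained from $Z_{x,y}^{m,k}>0$ for all $y$ (Gaussian steps) and $\tilde v^k>0$.
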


\begin{proof}[Proof of \Cref{thm:mainThm1}]
Using \Cref{thm:logConcaveCorollary} we need only verify that \Cref{positivityMarginalReq} of \Cref{thm:generalThm} holds $\Prb$-almost surely, with $G= \R.$ \Cref{thm:positiveMassGaussian} shows that 
\[\liminf_{n\to \infty}(\mu_x^{-1,n} \pi_0^{-1})(U) > 0\] 
for any set $U$ with positive Lebesgue measure, and so in particular $\Prb$-almost surely \Cref{positivityMarginalReq} holds for any $r > 0$ with $m = -1$, 
\end{proof}

\section{Joint Localization Under \Cref{assumptions2}}\label{assumptions2Section}

In this section, we prove \Cref{thm:mainThm2} and hence establish joint localization for the simple random walk model with one-step measure $\lambda = \frac{1}{2}\delta_{-1}+\frac{1}{2}\delta_{1}$.
This model is equivalent to the up-right path model of \cite{Janjigian--Rassoul-Agha:MR4089495} obtained from ours by a coordinate change (rotation by $\pi/2$ and scaling by $\sqrt{2}$).
Throughout this section we work under Assumption~\ref{assumptions2} designed to ensure that the assumptions in \cite{Janjigian--Rassoul-Agha:MR4089495} are satisfied.

Our goal is to check that all three requirements of \Cref{thm:generalThm} hold for this model for any bounded subset $A$ of $2\Z$.

\bpf[Proof of \Cref{onePointLocalizationReq} of \Cref{thm:generalThm}]
We recall that for any non-constant environment in dimension one, the very strong disorder property
\[\lim_{n\to \infty}\frac{1}{n}\log Z^n < \log \E e^{- F_0(0)}\]
was shown to hold true when the collection $(F_0(x))_{x\in \Z}$ is i.i.d. and $F_0(0)$ has all exponential moments in \cite{CV06}. The same result under \Cref{assumptions2} was shown to be true in \cite{LW09}. It was shown in
\cite{Bates-Chatterje:MR4089496} (one can also apply generalizations in~\cite{bakhtin-seo2020localization}, \cite{Bates:MR4269204}) that this property (i.e., the discrepancy between the annealed and quenched average free energies) implies localization, thus ensuring  \Cref{onePointLocalizationReq}. \epf

To see that \Cref{partitionRatioReq} of \Cref{thm:generalThm} holds, we will use the following lemma which is only a restatement of Lemma C.3 of \cite{Janjigian--Rassoul-Agha:MR4089495} for our
coordinate system in a convenient form.

For $n\in\N$, we will need 
\begin{equation}
\label{eq:rangeV}
 V_n = \{-n,-n+2,\dots, n-2,n\}, 
 \end{equation}
 the set of points accessible by the simple random walk at time $n$.
 
 \begin{lemma}[\cite{Janjigian--Rassoul-Agha:MR4089495}]\label{thm:ratioMonotonicityDiscrete}
Let $F\in\Omega$, $a\in 2\Z$, 
 $u,v\in \Z$, $u\le v$, and $n\in\N$.
 If
 \begin{equation}
 \label{eq:u,v_in_cone1}
u,v\in a+1+ V_{n-1},
\end{equation}
  then
 \begin{equation}\label{eq:ratioDiscrete1}
 \frac{Z_{a+1,v}^{1,n}}{Z_{a,v}^{0,n}} \ge \frac{Z_{a+1,u}^{1,n}}{Z_{a,u}^{0,n}}.
  \end{equation}
If 
 \begin{equation}
 \label{eq:u,v_in_cone2}
u,v\in a-1+ V_{n-1},
\end{equation}
  then
 \begin{equation}\label{eq:ratioDiscrete2}
 \frac{Z_{a-1,v}^{1,n}}{Z_{a,v}^{0,n}} \le \frac{Z_{a-1,u}^{1,n}}{Z_{a,u}^{0,n}}.
 \end{equation}
\end{lemma}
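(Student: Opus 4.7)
The plan is to reduce both inequalities to a single log--supermodular estimate
$$Z_{a-1,v}^{1,n}\,Z_{a+1,u}^{1,n} \le Z_{a-1,u}^{1,n}\,Z_{a+1,v}^{1,n} \qquad (\star)$$
valid for all $a\in 2\Z$, $u\le v$, and $n\in\N$, and then to prove $(\star)$ by a direct path--swap bijection. The reduction comes from the SSRW one--step decomposition
$$Z_{a,v}^{0,n} = \tfrac12 e^{-F_0(a)}\bigl[Z_{a+1,v}^{1,n}+Z_{a-1,v}^{1,n}\bigr].$$
If $v\in a+1+V_{n-1}$ then $Z_{a+1,v}^{1,n}>0$, so dividing gives
$$\frac{Z_{a+1,v}^{1,n}}{Z_{a,v}^{0,n}} = \frac{2e^{F_0(a)}}{1+R_v},\qquad R_v := \frac{Z_{a-1,v}^{1,n}}{Z_{a+1,v}^{1,n}}\in[0,\infty),$$
and \eqref{eq:ratioDiscrete1} is equivalent to $R_v\le R_u$ for $u\le v$ in $a+1+V_{n-1}$, which is just $(\star)$ after cross--multiplying. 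Inequality \eqref{eq:ratioDiscrete2} is the mirror image: under $v\in a-1+V_{n-1}$ the same decomposition gives $Z_{a-1,v}^{1,n}/Z_{a,v}^{0,n}=2e^{F_0(a)}/(1+R_v^{-1})$, and the requisite monotonicity is again $(\star)$ restricted to the appropriate support, so no separate argument is needed.

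To prove $(\star)$ I would expand each side as a double sum over pairs of SSRW trajectories with the prescribed endpoints on $\{1,\dots,n\}$, weighted by $\prod_{k=1}^{n-1}\tfrac14 e^{-F_k(\gamma_a(k))-F_k(\gamma_b(k))}$. A pair $(\gamma_a,\gamma_b)$ contributing to the left--hand side satisfies $\gamma_a(1)=a-1<a+1=\gamma_b(1)$ but $\gamma_a(n)=v\ge u=\gamma_b(n)$, so the difference $\gamma_b-\gamma_a$ starts strictly positive and ends nonpositive. Because both paths evolve on a common parity sublattice and each step alters this difference by $0$ or $\pm 2$, the first time $k^*\in\{2,\dots,n\}$ at which $\gamma_b(k^*)\le\gamma_a(k^*)$ is in fact an equality $\gamma_a(k^*)=\gamma_b(k^*)$. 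Swapping the two paths from $k^*$ onward produces $(\gamma_a',\gamma_b')$ with $\gamma_a'(n)=u$ and $\gamma_b'(n)=v$, contributing to the right--hand side of $(\star)$. Since $\{\gamma_a(k),\gamma_b(k)\}=\{\gamma_a'(k),\gamma_b'(k)\}$ as unordered pairs for every $k$, the energy weight is preserved exactly; the map is injective, with inverse given by swapping at the first coincidence of $\gamma_a',\gamma_b'$. Because the right--hand side additionally contains pairs of paths that never meet, $(\star)$ follows.

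I do not anticipate a serious obstacle. The only delicate point is the parity/discrete--IVT observation that $\gamma_a$ and $\gamma_b$ must coincide (rather than jump past each other) at the first time the ordering switches; this is forced by $a-1$ and $a+1$ having the same parity combined with the $\pm 1$ step rule. An alternative derivation of $(\star)$ would be to invoke the Lindstr\"om--Gessel--Viennot lemma on the weighted SSRW DAG, but the self--contained swap above matches the argument in Lemma C.3 of \cite{Janjigian--Rassoul-Agha:MR4089495} cited in the statement and requires no additional machinery.
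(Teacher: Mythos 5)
The paper itself gives no proof of this lemma: it only restates Lemma C.3 of \cite{Janjigian--Rassoul-Agha:MR4089495} in its own coordinate system and otherwise defers to that reference. Your proposal is therefore the only argument on the table, and it is correct. The one-step SSRW decomposition $Z_{a,v}^{0,n}=\tfrac12 e^{-F_0(a)}\bigl[Z_{a+1,v}^{1,n}+Z_{a-1,v}^{1,n}\bigr]$ is exactly right (the Hamiltonian $\sum_{k=0}^{n-1}F_k(x_k)$ picks up $e^{-F_0(a)}$ at time $0$ and the first step has weight $\tfrac12$), and both \eqref{eq:ratioDiscrete1} and \eqref{eq:ratioDiscrete2} do reduce to the single four-point inequality $Z_{a-1,v}^{1,n}Z_{a+1,u}^{1,n}\le Z_{a-1,u}^{1,n}Z_{a+1,v}^{1,n}$ once one cross-multiplies using the positivity of the relevant denominators supplied by \eqref{eq:u,v_in_cone1} and \eqref{eq:u,v_in_cone2} (the conventions for degenerate ratios that you elide do work out). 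The path-swap proof of the four-point inequality is also sound: $\gamma_b-\gamma_a$ starts at $2$, is always even, and moves by at most $2$ per step, so the first time $k^*$ with $\gamma_b(k^*)\le\gamma_a(k^*)$ must be an equality; the swap from $k^*$ on is weight-preserving because both the step factors and the potential factor $e^{-F_k(\gamma_a(k))-F_k(\gamma_b(k))}$ depend only on the unordered pair at each time; and injectivity holds because the first coincidence time is unchanged by the swap, while never-meeting pairs on the right supply the slack. This is the standard crossing (log-supermodularity) argument and is indeed the mechanism behind Lemma C.3 of the cited reference, so you are not diverging from it --- you are simply supplying the proof the present paper chose to outsource.
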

\bpf[Proof of \Cref{partitionRatioReq} of \Cref{thm:generalThm}] Let us fix $n\in\N$.
The claim is obvious if $a=b$ or $u=v$, so let us assume $a,b\in 2\Z$  and $u,v\in\Z$ satisfy $a<b$ and $u<v$. 
Assuming
\begin{equation}
\label{eq:Z_positive}
Z^{0,n}_{a,v}, Z^{0,n}_{b,u}>0,
\end{equation}
 we obtain
 $v\in a+V_n$ and $u\in b+V_n$ and see
 that for all $x\in\{a+1,a+3,\ldots,b-1\}$, $u,v\in x+V_{n-1}$. This allows to check conditions \eqref{eq:u,v_in_cone1}, \eqref{eq:u,v_in_cone2} and 
 apply inequalities~\eqref{eq:ratioDiscrete1},~\eqref{eq:ratioDiscrete2} to these intermediate values.
 In particular, for every $y\in\{a,a+2,\ldots, b-2\}$, we obtain
\[\frac{Z_{y,u}^{0,n}}{Z_{y,v}^{0,n}} \ge \frac{Z_{y+1,u}^{1,n}}{Z_{y+1,v}^{1,n}} \ge  \frac{Z_{y+2,u}^{0,n}}{Z_{y+2,v}^{0,n}}.\]
Combining these inequalities over all these values of $y$, we obtain
\begin{equation}
\label{eq:ratioDiscrete_ab}
\frac{Z_{a,u}^n}{Z_{a,v}^n} \ge \frac{Z_{b,u}^n}{Z_{b,v}^n},
\end{equation}
which is equivalent to \eqref{eq:Z-log-concavity} under our assumption~\eqref{eq:Z_positive}.

It remains to consider the case where~\eqref{eq:Z_positive} is violated.
If $Z^{0,n}_{a,v}=0$, then \eqref{eq:Z-log-concavity} obviously holds. Also, if $Z^{0,n}_{b,v}=0$, then $Z^{0,n}_{a,v}=0$ and thus \eqref{eq:Z-log-concavity} holds.
This completes the proof of
\Cref{partitionRatioReq} for an arbitrary $A\subset 2\Z$.
\epf

Finally, we establish \Cref{positivityMarginalReq} using boundedness of point-to-point partition function ratios. The following lemma is a simple corollary of Theorem 4.16 in \cite{Janjigian--Rassoul-Agha:MR4089495}.
\begin{lemma}\label{thm:partitionRatiosDiscrete}
For $\Prb$-almost every $F\in \Omega,$ every $v\in (-1,1),$ and every sequence $(x_n)_{n\in \N}$ such that $\lim_{n\to \infty}\frac{x_n}{n} = v,$ and $a,b\in 2\Z,$
\begin{equation}\label{eq:partitionsRatiosDiscrete}
0 < \liminf_{n\to \infty}\frac{Z_{a,x_n}^n}{Z_{b,x_n}^n} \le \limsup_{n\to \infty}\frac{Z_{a,x_n}^n}{Z_{b,x_n}^n} < \infty.
\end{equation}
\end{lemma}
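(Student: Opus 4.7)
The plan is to derive the claim from Theorem~4.16 of~\cite{Janjigian--Rassoul-Agha:MR4089495} by translating into the coordinate system of that paper. As noted at the beginning of this section, our SSRW model is equivalent to the up-right path model through a rotation by $\pi/2$ and scaling by $\sqrt 2$; under this identification a space-time direction $v\in(-1,1)$ for our polymer corresponds to a direction strictly inside the interior of the admissible cone of up-right paths. This open cone is precisely where \cite{Janjigian--Rassoul-Agha:MR4089495} constructs, for a full measure deterministic set of directions, finite positive Busemann-type limits of the form $\lim_{n\to\infty} Z_{a,x_n}^n/Z_{b,x_n}^n$.

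I would fix a countable dense subset $D\subset(-1,1)$ of directions at which Theorem~4.16 applies. For each $v\in D$ and each $a,b\in 2\Z$, that theorem produces a probability-one event on which $Z_{a,x_n}^n/Z_{b,x_n}^n$ converges to a finite positive limit whenever $x_n/n\to v$. Taking the countable intersection over $(v,a,b)\in D\times(2\Z)^2$ yields a single probability-one event $\Omega^*$ on which~\eqref{eq:partitionsRatiosDiscrete} holds (with an actual limit) for every $v\in D$ and every $a,b\in 2\Z$. To extend the boundedness from $v\in D$ to arbitrary $v\in(-1,1)$, I would invoke the monotonicity used in the proof of \Cref{partitionRatioReq}: for fixed $n$ and fixed $a<b$ in $2\Z$, the function $w\mapsto Z_{a,w}^n/Z_{b,w}^n$ is nonincreasing on the set of admissible endpoints, with the reverse monotonicity when $a>b$. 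Given $F\in\Omega^*$ and $v\in(-1,1)$, pick $v_-,v_+\in D$ with $v_-<v<v_+$ and choose sequences $w_\pm^n$ with $w_-^n<x_n<w_+^n$, $w_\pm^n/n\to v_\pm$, and parities matching $n+a \pmod 2$; sandwiching $Z_{a,x_n}^n/Z_{b,x_n}^n$ between the ratios at $w_-^n$ and $w_+^n$ and invoking the finite positive limits at $v_\pm$ produces the desired liminf/limsup bounds at $v$.

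I expect the main obstacle to be the explicit coordinate bookkeeping between our SSRW conventions and the up-right indexing of \cite{Janjigian--Rassoul-Agha:MR4089495}, including verifying that the parity constraints $x_n\in a+V_n$ for each $a\in 2\Z$ are handled consistently when selecting the sandwiching sequences. Once this translation is in place, the monotone sandwich and countable intersection are entirely routine, in accordance with the paper's assertion that the lemma is merely a simple corollary of Theorem~4.16.
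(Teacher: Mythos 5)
Your route is genuinely different from the paper's, and noticeably more elaborate. The paper reads Theorem~4.16 of \cite{Janjigian--Rassoul-Agha:MR4089495} as directly giving, for \emph{every} $v\in(-1,1)$ and on a single probability-one event, finite positive $\liminf$/$\limsup$ bounds on the nearest-neighbor ratios $Z_{a\pm1,x_n}^{1,n}/Z_{a,x_n}^{0,n}$, and then telescopes these through $a, a+1, \dots, b$ (alternating the time index to keep the parities matched) to get \eqref{eq:partitionsRatiosDiscrete}. There is no density argument and no sandwiching; the whole proof is the telescoping identity plus the citation.

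Your plan instead goes through actual \emph{convergence} of the ratios at a countable dense set $D$ of directions, and then interpolates by the path-crossing monotonicity. This can be made to work, but a few things deserve care. First, Theorem~4.16 by itself gives bounds, not convergence; as the paper's remark notes, passing from bounds to a limit for a given $v$ is a Theorem~3.8-type statement and requires differentiability of the shape function at $v$. Since the shape function is convex, it is differentiable off a countable set, so a suitable dense $D$ exists, but you should say explicitly that $D$ is chosen inside the differentiability set and cite the convergence statement rather than Theorem~4.16 alone. Second, even with the sandwiching, you still need to telescope the nearest-neighbor results up to general $a,b\in 2\Z$ before invoking the limits; this is implicit in your sketch but is exactly the part the paper makes explicit. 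Third, your sandwich needs the sequences $w^n_\pm$ chosen with $w^n_\pm/n\to v_\pm$ and $w^n_-<x_n<w^n_+$ eventually; this is fine since $v_-<v<v_+$, but the construction (take $w^n_\pm$ near $v_\pm n$, not near $x_n$, with the correct parity) should be spelled out. In short, your argument appears sound, but it uses strictly more machinery than what the paper deploys, and the extra machinery (differentiability/convergence and density) is avoidable: the direct boundedness plus telescoping is cleaner and is what the authors have in mind when they call the lemma ``a simple corollary.''
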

\begin{proof}
Theorem 4.16 of \cite{Janjigian--Rassoul-Agha:MR4089495} shows that 
\begin{equation}\label{eq:simpleRatioBound1}
0 < \liminf_{n\to \infty}\frac{Z_{a+1,x_n}^{1,n}}{Z_{a,x_n}^{0,n}} \le \limsup_{n\to \infty}\frac{Z_{a+1,x_n}^{1,n}}{Z_{a,x_n}^{0,n}} < \infty
\end{equation}
and
\begin{equation}\label{eq:simpleRatioBound2}
0 < \liminf_{n\to \infty}\frac{Z_{a-1,x_n}^{1,n}}{Z_{a,x_n}^{0,n}} \le \limsup_{n\to \infty}\frac{Z_{a-1,x_n}^{1,n}}{Z_{a,x_n}^{0,n}} < \infty
\end{equation}
$\Prb$-almost surely for every $a\in \Z$. If $a\le b$, then we can write $\frac{Z_{a,x_n}^n}{Z_{b,x_n}^n}$ as the telescoping product
\[\frac{Z_{a,x_n}^n}{Z_{b,x_n}^n} = \prod_{\ell = a}^{b-1} \frac{Z_{\ell,x_n}^{e_\ell,n}}{Z_{\ell+1,x_n}^{e_{\ell+1},n}}\]
where $e_a = 0$ and $e_\ell = 1 - e_{\ell-1}$ for $\ell=a+1,\ldots,b$. We can then apply the displays~\eqref{eq:simpleRatioBound1} and \eqref{eq:simpleRatioBound2} to obtain the result.
\end{proof}

The limiting behavior of ratios of partition functions is tightly connected to the properties of the  shape function.  
As discussed in \cite{Janjigian--Rassoul-Agha:MR4089495}, \cite{Rassoul-Agha--Seppalainen:MR3176363}, and \cite{Comets:MR3444835}, 
there is a convex, continuous, deterministic, even function  $\Lambda:[-1,1] \to \R$, called the \textit{shape function}, such that
\begin{equation}
    \lim_{n\to \infty} \max_{x\in a+V_n}\left|\frac{1}{n}\log Z_{a,x}^n - \Lambda((x-a)/n)\right| = 0\label{shapeTheorem}
\end{equation}
$\Prb$-almost surely.  In addition, $\Lambda$ is not constant if the environment is not deterministic.

\begin{remark}
Theorem 4.16 of \cite{Janjigian--Rassoul-Agha:MR4089495} gives a stronger result than what we used in the proof of \Cref{thm:partitionRatiosDiscrete}.
It gives upper and lower bounds for \eqref{eq:simpleRatioBound1} and \eqref{eq:simpleRatioBound2} in terms of Busemann functions. In addition, Theorem 3.8 in \cite{Janjigian--Rassoul-Agha:MR4089495} implies that if the shape function  $\Lambda$ is differentiable on $(-1,1)$ then for every $v\in (-1,1)$ there is a probability one event such that the ratios in \eqref{eq:partitionsRatiosDiscrete} converge on it. 
\end{remark}

Convergence of ratios of point-to-line partition functions to finite positive random variables (exponentials of Busemann functions) follows immediately from Theorem~3.8 in \cite{Janjigian--Rassoul-Agha:MR4089495} under the assumption that the shape function is differentiable everywhere. 
It is widely believed that this differentiability assumption holds for a broad class of potentials $F$. 

Although no direct analogue of \Cref{thm:partitionRatiosGaussian} is available for lattice polymers,
we are still able to prove the following useful result without any differentiability assumptions:
\begin{lemma}\label{thm:partitionRatioDiscrete}
For $\Prb$-almost every $F\in \Omega$ and for every $a,b\in 2\Z,$ 
\begin{equation}
    0 < \liminf_{n\to \infty} \frac{Z_{a}^{n}}{Z_{b}^{n}} \le \limsup_{n\to \infty} \frac{Z_{a}^{n}}{Z_{b}^{n}} < \infty.\label{eq:partitionRatioDiscrete}
\end{equation}
\end{lemma}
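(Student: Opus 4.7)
The plan is to combine Lemma~\ref{thm:partitionRatiosDiscrete} (which bounds point-to-point ratios along ballistic directions), the path-crossing monotonicity established in the proof of \Cref{partitionRatioReq} of \Cref{thm:generalThm} (which says $u\mapsto Z_{a,u}^n/Z_{b,u}^n$ is nonincreasing on $(a+V_n)\cap(b+V_n)$ for $a<b$), and the shape theorem \eqref{shapeTheorem}. By the identity $Z_b^n/Z_a^n=(Z_a^n/Z_b^n)^{-1}$, the $\liminf>0$ bound follows from the $\limsup<\infty$ bound with $a$ and $b$ swapped, so it suffices to prove $\limsup_n Z_a^n/Z_b^n<\infty$ almost surely for $a<b$ in $2\Z$.

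Fix $v_0\in(-1,1)$ (to be chosen below) and pick $x_n\in a+V_n$ with $x_n/n\to v_0$. Lemma~\ref{thm:partitionRatiosDiscrete} yields an a.s.\ finite random $C>0$ with $Z_{a,x_n}^n\le C Z_{b,x_n}^n$ for all $n$ large enough. Applying \eqref{eq:Z-log-concavity} with the pairs $(a,x_n)$ and $(b,u)$ for $u\ge x_n$ in $(a+V_n)\cap(b+V_n)$ gives $Z_{a,u}^n\le C Z_{b,u}^n$, whence
\[
\sum_{u\in(a+V_n)\cap(b+V_n),\,u\ge x_n}Z_{a,u}^n\le C\,Z_b^n.
\]

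The residual left tail $T_n:=\sum_{u\in a+V_n,\,u<x_n}Z_{a,u}^n$ will be controlled via the shape theorem. Choose $v_0$ close enough to $-1$ that $\sup_{w\in[-1,v_0]}\Lambda(w)<\rho$; then \eqref{shapeTheorem} supplies a $\delta>0$ with $Z_{a,u}^n\le e^{n(\rho-\delta)}$ uniformly in $u<x_n$ for large $n$. Since $T_n$ has only $O(n)$ summands while $n^{-1}\log Z_b^n\to\rho$ a.s., we get $T_n/Z_b^n\to 0$; combined with the previous display, $\limsup_n Z_a^n/Z_b^n\le C<\infty$ almost surely.

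The main obstacle will be justifying that such a $v_0$ exists, i.e.\ $\Lambda(-1)<\rho$ strictly. Along the unique $n$-step random walk path from $0$ to $-n$, the LLN identifies $\Lambda(-1)=-\log 2-\E F_0(0)$. For $\Lambda(0)=\lim n^{-1}\log Z_{0,0}^n$, exploiting the $\binom{n}{n/2}\sim 2^n/\sqrt{n}$ bridge paths together with an LLN/second moment estimate, valid under \Cref{assumptions2}, to show that a positive fraction of bridges have energy at most $n\E F_0(0)(1+o(1))$, yields $\Lambda(0)\ge -\E F_0(0)=\Lambda(-1)+\log 2>\Lambda(-1)$, whence $\rho\ge\Lambda(0)>\Lambda(-1)$. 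The $\log 2$ combinatorial gap between the lone diagonal path and the binomially many bridge paths is what makes the lemma true, and by continuity of $\Lambda$ we can then pick $v_0$ slightly above $-1$ as required.
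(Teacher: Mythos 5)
Your decomposition of $Z_a^n$ into the sum over $u\ge x_n$ (controlled via the point-to-point ratio bound of \Cref{thm:partitionRatiosDiscrete} together with the crossing inequality \eqref{eq:Z-log-concavity}) and a left tail (controlled via the shape theorem) is exactly the paper's argument; the paper writes the split with a parameter $v^*$ playing your role of $v_0$, and compares the tail to $Z_b^n$ via $e^{n\lambda^*}$ with $\lambda^*>\Lambda(v^*)$ instead of invoking $n^{-1}\log Z_b^n\to\rho$ directly, but these are cosmetically different ways of saying the same thing. Reducing the two one-sided bounds to a single $\limsup<\infty$ bound by symmetry in $a,b$ is also the paper's move.

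The one genuine gap is in your final paragraph. The existence of $v_0$ with $\sup_{w\in[-1,v_0]}\Lambda(w)<\rho$ is what the paper handles by simply citing, just above the lemma, the fact that $\Lambda$ is continuous, even, concave (the paper misstates ``convex'' once, but uses concavity in the proof), and \emph{not constant} when the environment is nondegenerate --- this is quoted as known from \cite{Janjigian--Rassoul-Agha:MR4089495}, \cite{Rassoul-Agha--Seppalainen:MR3176363}, \cite{Comets:MR3444835}. You instead attempt to re-derive $\Lambda(-1)<\Lambda(0)$ from scratch, and the step ``a positive fraction of bridges have energy at most $n\,\E F_0(0)(1+o(1))$'' is not justified as written: LLN applies to each fixed bridge, but you need a statement holding simultaneously across exponentially many bridges for a typical environment, and a second-moment bound on the bridge-averaged energy does not immediately give you the $(1+o(1))$ you assert. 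The conclusion you are after is nonetheless true and has a one-line proof you could substitute: writing $\mathrm{P}_{0,0}^n=m_n Q_n$ with $Q_n$ a probability measure and $m_n=2^{-n}\binom{n}{n/2}$, Jensen's inequality gives $\log Z_{0,0}^n\ge\log m_n - Q_n\bigl[\textstyle\sum_k F_k(S_k)\bigr]$, and taking $\E$ and using Fubini and stationarity yields $\Lambda(0)\ge -\E F_0(0)=\Lambda(-1)+\log 2$, which is what you wanted. With that repair, or by simply invoking the cited non-constancy as the paper does, your argument closes.
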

\begin{proof}
We will prove that \eqref{eq:partitionRatioDiscrete} holds on the probability one event that \eqref{shapeTheorem} and the conclusion of \Cref{thm:partitionRatiosDiscrete} hold.

Suppose first that $b > a.$ Recalling the definition of $V_n$ in~\eqref{eq:rangeV}, for any $v\in (-1,1)$, we can write
\begin{align*}
Z_{a}^n & = \sum_{x\in a + V_n} Z_{a,x}^n \\
& = \sum_{\substack{x\in a+V_n,\\x < v n + a}} Z_{a,x}^n +  \sum_{\substack{x\in a+V_n,\\x \ge v n + a}} Z_{a,x}^n\\
& \le  (\Sigma_1(v,n)+\Sigma_2(v,n)) Z_{b}^n ,
\end{align*}
where 
\begin{align*}
\Sigma_1(v,n)&= \frac{\sum_{\substack{x\in a+V_n,\\x < v n+a}} Z_{a,x}^n}{Z_{b}^n},\\
\Sigma_2(v,n)& =\max_{\substack{x\in a+V_n,\\x\ge v n + a}}\left\{\frac{Z_{a,x}^n}{Z_{b,x}^n} \right\}.
\end{align*}
We note that if $x\in a+V_n$ and $x\ge v n + a$, then $x\in b+V_n$, so $Z_{b,x}^n>0$ and $\Sigma_2(v,n)$ are well-defined. Let us  show that 
\begin{equation}
\label{eq:limsup_Sigma_2}
\limsup_{n\to\infty}\Sigma_2(v,n)<\infty,\quad \forall v\in(-1,1).
\end{equation}

Let $x_n(v) =\min\{x\in a + V_n : x\ge v n + a\}$.
It follows from \eqref{eq:ratioDiscrete_ab} (or, equivalently from \Cref{partitionRatioReq} of \Cref{thm:generalThm}) that for all $x \ge x_n(v)$
\[\frac{Z_{a,x}^n}{Z_{b,x}^n} \le \frac{Z_{a,x_n(v)}^n}{Z_{b,x_n(v)}^n},\]
so 
\[
\Sigma_{2}(v,n)\le \frac{Z_{a,x_n(v)}^n}{Z_{b,x_n(v)}^n}.\]
Since $\lim_{n\to \infty}\frac{x_n(v)}{n} = v,$ we can apply \Cref{thm:partitionRatiosDiscrete} to the right-hand side of the above and obtain
\[\limsup_{n\to \infty} \Sigma_{2}(v,n)  \le\limsup_{n\to \infty} \frac{Z_{a,x_n(v)}^n}{Z_{b,x_n(v)}^n} < \infty,\]
proving \eqref{eq:limsup_Sigma_2}. 

Our next goal is to find $v^*\in(-1,1)$ such that 
\begin{equation}
\label{eq:limsup_Sigma_1}
\limsup_{n\to\infty}\Sigma_1(v^*,n)<\infty.
\end{equation}
This estimate along with \eqref{eq:limsup_Sigma_2} applied to $v=v^*$ implies the upper bound in the lemma for the case $b>a$.

Since $\Lambda$ is concave and not constant, there is $v^*\in(-1,1)$ such that \[\max_{w\in [-1,1]}\Lambda(w) > \Lambda(v^*)\] and $\Lambda(v) \le \Lambda(v^*)$ for all $-1\le v < v^*.$ Then, by \eqref{shapeTheorem} and the choice of $v^*$,
\begin{equation}
    \sum_{\substack{x\in a+V_n,\\x < v^* n + a}} Z_{a,x}^n = \sum_{\substack{x\in a+V_n,\\x < v^* n + a}} e^{n\Lambda((x-a)/n) + o(n)} \le (n+1)e^{n\Lambda(v^*) + o(n)}.\label{vStarUpperBound}
\end{equation}
Here $o(n)$ is uniform over $x$ in the summation.

Consider a maximizer $u^*\in (-1,1)$ of $\Lambda.$ There is $\epsilon > 0$ such that 
\begin{equation}
\label{eq:lambda_star}
\lambda^* := \inf_{|w- u^*|<\epsilon} \Lambda(w) > \Lambda(v^*).
\end{equation}
For sufficiently large $n$, we have
\begin{equation}
    Z_{b}^n \ge  \sum_{\substack{x \in b + V_n,\\|(x-b)/n - u^*| < \epsilon}} Z_{b,x}^n \ge \frac{1}{2}\epsilon n e^{n\lambda^* + o(n)}.\label{uStarUpperBound}
\end{equation}
Inequalities \eqref{vStarUpperBound}, \eqref{eq:lambda_star}, and \eqref{uStarUpperBound} imply
\[
\Sigma_1(v^*,n)\le \frac{2(n+1)}{n\epsilon} e^{n(\Lambda(v^*) - \lambda^*) + o(n)}\to 0
\]
as $n\to \infty.$ Thus, \eqref{eq:limsup_Sigma_1} is established, which completes the proof of the upper bound for the case $b>a$. 

The proof of the upper bound for the case where $b < a$ is similar. For the lower bound it suffices to reverse the roles of $a$ and $b$ and apply the upper bound.
\end{proof}

With \Cref{thm:partitionRatioDiscrete} at hand, we can prove the following result.

\begin{proposition}\label{thm:positiveMassDiscrete}
If  $m < k < n$ and $y\in a+V_{k-m},$ then $\Prb$-almost surely,
\[\liminf_{n\to \infty} (\mu_a^{m,n} \pi_k^{-1})(\{y\}) > 0.\]
\end{proposition}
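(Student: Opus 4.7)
The plan is to disintegrate the polymer measure at time $k$ into a product of a point-to-point partition function on $[m,k]$ and a point-to-line partition function on $[k,n]$, and then reduce positivity of the limit inferior to the boundedness of point-to-line partition function ratios established in Lemma~\ref{thm:partitionRatioDiscrete}.

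First, by an argument analogous to Lemma~\ref{thm:endpointDisintegration} (just specialized to a singleton in the lattice setting), one obtains
\[
(\mu_a^{m,n}\pi_k^{-1})(\{y\}) = \frac{Z_{a,y}^{m,k}\, Z_y^{k,n}}{Z_a^{m,n}},
\]
together with the decomposition
\[
Z_a^{m,n} = \sum_{z \in a + V_{k-m}} Z_{a,z}^{m,k}\, Z_z^{k,n}.
\]
Dividing numerator and denominator by $Z_y^{k,n}$ (which is $\Prb$-a.s.\ positive and finite), this gives
\[
(\mu_a^{m,n}\pi_k^{-1})(\{y\}) = \frac{Z_{a,y}^{m,k}}{\displaystyle\sum_{z \in a + V_{k-m}} Z_{a,z}^{m,k}\, \frac{Z_z^{k,n}}{Z_y^{k,n}}}.
\]

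Next, since $y \in a+V_{k-m}$, the numerator $Z_{a,y}^{m,k}$ is a $\Prb$-a.s.\ strictly positive deterministic quantity (depending only on the environment $F$ restricted to $[m,k-1]\times \Z$), which is unaffected by letting $n\to\infty$. For the denominator, the sum is finite since $V_{k-m}$ is finite. By Lemma~\ref{thm:partitionRatioDiscrete}, applied after a time shift by $k$ (which is valid by stationarity of the environment $\Prb$ under $\theta^{k,0}$), we have $\Prb$-almost surely
\[
\limsup_{n\to\infty} \frac{Z_z^{k,n}}{Z_y^{k,n}} < \infty
\]
for every $z \in a+V_{k-m}$. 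Intersecting over the finitely many pairs $(y,z)$ with $z \in a+V_{k-m}$ preserves probability one, so almost surely
\[
\limsup_{n\to\infty} \sum_{z \in a + V_{k-m}} Z_{a,z}^{m,k}\, \frac{Z_z^{k,n}}{Z_y^{k,n}} < \infty.
\]

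Combining these two facts yields
\[
\liminf_{n\to\infty}(\mu_a^{m,n}\pi_k^{-1})(\{y\}) \;\ge\; \frac{Z_{a,y}^{m,k}}{\displaystyle\limsup_{n\to\infty}\sum_{z \in a + V_{k-m}} Z_{a,z}^{m,k}\,\frac{Z_z^{k,n}}{Z_y^{k,n}}}\;>\;0,
\]
which is the desired conclusion. The main (and only substantive) obstacle is ensuring that Lemma~\ref{thm:partitionRatioDiscrete} transfers to the shifted partition functions $Z^{k,n}_{\cdot}$ and to the parity class relevant here; this is handled by stationarity and by the fact that the proof of Lemma~\ref{thm:partitionRatioDiscrete} via the shape function~\eqref{shapeTheorem} and Lemma~\ref{thm:partitionRatiosDiscrete} is insensitive to the parity of the starting points and to time shifts.
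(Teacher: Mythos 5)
Your proposal is correct and takes essentially the same route as the paper: disintegrate the measure at time $k$, rewrite the ratio $Z_y^{k,n}/Z_a^{m,n}$ as the reciprocal of a finite sum of the ratios $Z_z^{k,n}/Z_y^{k,n}$, and invoke Lemma~\ref{thm:partitionRatioDiscrete} to bound those ratios. You go slightly further than the paper by explicitly flagging the time shift (via stationarity) and the parity class needed to apply Lemma~\ref{thm:partitionRatioDiscrete}; the paper leaves this implicit.
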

\begin{proof}
    For all $y\in a + V_{k-m}$, we have
    \[(\mu_a^{m,n} \pi_k^{-1})(\{y\}) = \frac{Z_{a,y}^{m,k}Z_{y}^{k,n}}{Z_{a}^{m,n}}.\] 
   The factor $Z_{a,y}^{m,k}>0$ on the right-hand side does not depend on $n$. The remaining factors satisfy
    \begin{align*}
    \frac{Z_{y}^{k,n}}{Z_{a}^{m,n}} & = Z_{y}^{k,n} \left( \sum_{z\in a+ V_{k-m}} Z_{a,z}^{m,k}Z_{z}^{k,n}\right)^{-1}
    = \left( \sum_{z\in a+V_{k-m}} Z_{a,z}^{m,k}\frac{Z_{z}^{k,n}}{Z_{y}^{k,n}}\right)^{-1}.
    \end{align*}
    From  \Cref{thm:partitionRatioDiscrete}, $\limsup_{n\to \infty}\frac{Z_{z}^{k,n}}{Z_{y}^{k,n}} < \infty$ for all $z\in a+V_{k-m}$.  The result follows because $a+V_{k-m}$ is a finite set.
    \end{proof}

\bpf[Proof of \Cref{positivityMarginalReq} of \Cref{thm:generalThm}] Let $m<0$ be an integer such that $V_{-m}\cap(r,\infty)$ is not empty. Then, for any $y$ in this set, \Cref{thm:positiveMassDiscrete} implies
    \[\liminf_{n\to \infty} (\mu_{0}^{m,n} \pi_0^{-1})((r,\infty)) \ge \liminf_{n\to \infty} (\mu_{0}^{m,n} \pi_0^{-1})(\{y\})>0, \]
    so \eqref{eq:mass_above_plus} holds. The argument for \eqref{eq:mass_below_minus} is similar. This completes the proof of \Cref{positivityMarginalReq}
    under \Cref{assumptions2}
   and the proof of  the entire \Cref{thm:mainThm2}.
    \epf    
    
\section{Very Strong Disorder in Continuous Space}\label{veryStrongDisorderSection}

\subsection{Derivation of the theorem from auxiliary results}
\label{sec:main_very_strong_disorder}

In this section, we establish \Cref{thm:veryStrongDisorderTheorem} in the setting of \Cref{continuousEnvironmentAssumptions} and \Cref{densityAssumptions}. First, we briefly discuss the existence of the limit in \eqref{freeEnergyDef}. The reasoning is almost exactly the same as in the discrete case, given for example in \cite{CSY03}.  We give a sketch of the argument here for completeness. It is easy to prove that  the sequence $(\frac{1}{n}\E[\log Z^n])_{n\in \N}$ is subadditive which
implies that the limit
\begin{equation}
\label{eq:def_free_energy}
\freeEnergy = \lim_{n\to \infty}\frac{1}{n}\E[\log Z^n] 
\end{equation}
exists. Then, one can use an exponential concentration inequality which easily adapts to our continuous space setting:
 \begin{theorem}[Theorem 1.4 in \cite{LW09}]
    Suppose $\E[e^{|F_0(0)|}]<\infty.$ Then, there is a constant $a>0$ such that 
    \[\Prb\left\{\frac{1}{n}\left|\log Z^n - \E[\log Z^n]\right|>t \right\}\le \begin{cases}
        2 e^{-nat^2},& 0\le t\le 1\\
        2 e^{-nat}, & t>1.
    \end{cases}\]
\end{theorem}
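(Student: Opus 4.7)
The plan is to establish the concentration inequality via a Doob martingale decomposition combined with a truncation argument. Set $\mathcal{F}_k = \sigma(F_0,\dots,F_{k-1})$ for $0 \le k \le n$ and define
\[ D_k = \E[\log Z^n \mid \mathcal{F}_{k+1}] - \E[\log Z^n \mid \mathcal{F}_{k}],\quad k=0,\dots,n-1, \]
so that $\sum_{k=0}^{n-1} D_k = \log Z^n - \E[\log Z^n]$. Let $\tilde F_k$ be an independent copy of $F_k$ and write $F^{(k)}$ for the environment obtained by replacing $F_k$ with $\tilde F_k$; then by the standard resampling identity
\[ D_k = \E\bigl[\log Z^n(F) - \log Z^n(F^{(k)}) \bigm| \mathcal{F}_{k+1},\tilde F_k\bigr]. \]
The bulk of the work is to show
\[ \bigl|\log Z^n(F) - \log Z^n(F^{(k)})\bigr| \le \Psi_k + \tilde\Psi_k, \]
where $\Psi_k$ is a functional of $F_k$ alone (and $\tilde\Psi_k$ the analogous functional of $\tilde F_k$) enjoying an exponential moment bound of the form $\E[e^{c\Psi_k}] \le K$ with constants independent of $n$. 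Combined with the elementary identity $\log Z^n(F) - \log Z^n(F^{(k)}) = -\log \E_{\mu^n(F)}[e^{F_k(X_k) - \tilde F_k(X_k)}]$, such a bound is obtained by controlling the essential supremum of $F_k - \tilde F_k$ over the effective support of the polymer measure at time $k$; in the lattice case this support has cardinality $O(n)$ and $\Psi_k$ may be taken as $\max_{|x|\le n}|F_k(x)|$, while in the continuous case one invokes the exponential-moment assumption on $F_0^*(0)$ together with stationarity.

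Having reduced the problem to controlling the martingale with increments $D_k$ satisfying $|D_k| \le \E[\Psi_k + \tilde \Psi_k \mid \mathcal{F}_{k+1}, \tilde F_k]$, I would then truncate: for a parameter $u>0$ write $D_k = D_k' + D_k''$ with $D_k' = D_k\mathbf{1}_{\{\Psi_k \vee \tilde\Psi_k \le u\}}$. After recentering the truncated part into a new martingale (the centering contributing a negligible deterministic correction by symmetry between $F_k$ and $\tilde F_k$), one has $|D_k'| \le 2u$ almost surely, and the Azuma-Hoeffding inequality gives
\[ \Prb\Bigl\{\Bigl|\textstyle\sum_{k} D_k'\Bigr| > nt/2\Bigr\} \le 2 e^{-n t^2/(32 u^2)}. \]
The complementary event $\{\exists k : \Psi_k > u \text{ or } \tilde\Psi_k > u\}$ has probability at most $2n K e^{-cu}$ by the union bound and Markov. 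Choosing $u = O(1)$ for $t \in [0,1]$ balances the two estimates and yields the Gaussian regime $2e^{-ant^2}$, while choosing $u = \gamma t$ with small $\gamma$ for $t>1$ yields the exponential regime $2e^{-ant}$; together this delivers the piecewise bound stated in the theorem.

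The main obstacle is the first step, namely producing $\Psi_k$ with an exponential moment bound that is uniform in $n$. In the lattice case this is straightforward because the polymer is supported on a finite set of cardinality $O(n)$, and the logarithm in the tail of $\max$ absorbs the polynomial factor. In the continuous-space case the polymer measure has full support on $\R$, so a naive supremum of $F_k$ over $\R$ is typically infinite. One must instead exploit concentration of the polymer measure on a slowly growing window (controlled by the finite moments of the step distribution via \Cref{momentsReq}), combined with \Cref{exponentialMomentAssumption} on $F_0^*$ and the spatial $M$-dependence of the environment, to confine the relevant region to $|x|\le n^{O(1)}$ and then absorb the resulting logarithmic factor into the exponential moment. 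Making this confinement rigorous while keeping the constant $a$ in the theorem independent of $n$ is the crucial technical step and is where the argument of [LW09] refines the earlier bounded-increment martingale techniques of Carmona-Hu.
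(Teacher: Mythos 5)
First, note that the paper does not prove this statement at all: it is imported verbatim as Theorem 1.4 of \cite{LW09}, with the remark that the argument ``easily adapts'' to continuous space. So you are really being compared with the proof in that reference (and its antecedent, Proposition 2.5 of \cite{CSY03}). Your frame --- Doob martingale in the time slices of the environment, together with the resampling identity $\log Z^n(F)-\log Z^n(F^{(k)})=-\log \mu^n(F)\bigl[e^{F_k(X_k)-\tilde F_k(X_k)}\bigr]$ --- is the right starting point, but the step you yourself flag as crucial genuinely fails as described. A dominating functional $\Psi_k$ taken as a spatial supremum of $|F_k|$ over the effective support of the polymer (a set of $O(n)$ sites on the lattice, or $O(n^{C})$ unit intervals after confining the walk) does \emph{not} have exponential moments uniform in $n$: the maximum of $N$ variables with exponential tails is typically of order $\log N$, and $\E[e^{c\Psi_k}]$ grows like a power of $n$, so the constant $a$ produced by any Azuma or Bernstein step degrades with $n$ and the factor $n$ in the exponent is lost. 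The truncation does not repair this: with $u=O(1)$ the bad event $\{\exists k:\ \Psi_k\vee\tilde\Psi_k>u\}$ has probability of order $n^{C+1}e^{-cu}$, which is not small, while taking $u$ large enough to beat that union bound makes the Azuma estimate $e^{-nt^2/(32u^2)}$ worthless. The recentering of the truncated increments is also not a ``negligible deterministic correction by symmetry''; it is a term that itself requires moment control on the bad event.

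The actual proof avoids spatial localization entirely, which is precisely why the paper can quote it in continuous space with no extra work. Let $Z^{n,(k)}$ be the partition function with the $k$-th time slice annealed, i.e.\ with $e^{-F_k(x_k)}$ replaced by $\E[e^{-F_0(0)}]=e^{\vartheta}$, and set $g_k=\log\bigl(Z^n/Z^{n,(k)}\bigr)=\log \mu^{n,(k)}\bigl[e^{-F_k(X_k)-\vartheta}\bigr]$, where the annealed polymer measure $\mu^{n,(k)}$ is independent of $F_k$. Fubini then gives \emph{exactly} $\E\bigl[e^{g_k}\mid \sigma(F_j,\ j\neq k)\bigr]=1$, and Jensen's inequality in the form $1/\mu[h]\le\mu[1/h]$ gives $\E\bigl[e^{-g_k}\mid \sigma(F_j,\ j\neq k)\bigr]\le e^{\vartheta}\,\E[e^{F_0(0)}]$; both bounds use only $\E[e^{|F_0(0)|}]<\infty$ and make no reference to the spatial support of the polymer. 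Since $\log Z^{n,(k)}$ does not depend on $F_k$, the Doob increment equals $D_k=\E[g_k\mid\mathcal{F}_{k+1}]-\E[g_k\mid\mathcal{F}_k]$, and conditional Jensen converts the two displays above into $\E[e^{|D_k|}\mid\mathcal{F}_k]\le K$ with $K$ independent of $n$. The two-regime tail ($e^{-nat^2}$ for $t\le 1$, $e^{-nat}$ for $t>1$) is then exactly the output of the martingale exponential inequality of \cite{LW09}, a Bernstein-type bound under conditional exponential moments rather than Azuma--Hoeffding under boundedness. If you want to salvage your outline, replace the supremum $\Psi_k$ by this annealed-ratio bound; the truncation becomes unnecessary and no confinement of the walk is needed.
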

It follows that the sequence $(\frac{1}{n}\log Z^n)_{n\in\N}$ converges $\Prb$-a.s.\ and in $L^p(\Omega)$ for any $p\in [1,\infty)$ and
\begin{equation}
    \lim_{n\to \infty} \frac{1}{n}\log Z^n =  \freeEnergy.\notag
\end{equation}
Recall that $\logExp = \log\E[e^{-F_0(0)}]$. To prove \Cref{thm:veryStrongDisorderTheorem}, it suffices to prove
\begin{equation}
    \liminf_{n\to \infty}\frac{1}{n}\E[\log Z^n] < \logExp.\label{liminfReduction}
\end{equation}

Our proof of \eqref{liminfReduction} uses comparison of $\frac{1}{n}\E[\log Z^n]$ with fractional moments of~$Z^n$ and extends the ideas used in \cite{CV06} and \cite{CSY03} in the discrete case to the continuous case. There are some additional difficulties in the continuous setting. First, the authors in \cite{CV06} rely on the inequality 
\begin{equation}
    \left(\sum_{i} |x_i|\right)^\theta \le \sum_{i} |x_i|^\theta,\quad  x_i\in \R,\ i\in \Z,\ \theta\in(0,1). \label{thetaInequality}
\end{equation}
The version of this inequality where the sum is replaced by an integral does not hold in general. However, we are able to employ a discretization procedure that enables effective use of \eqref{thetaInequality}. Our discretization method is described in Section~\ref{fractionalMomentUpperBound}.

Second, the analysis in \cite{CSY03} uses the overlap metric
\begin{equation}
    I_n = (\rho^{ n})^{\otimes 2}\{X=Y\}\label{overlapMeasure_Discrete}
\end{equation}
where $X,Y\sim \rho^{n}$ are independent. Equivalently, \eqref{overlapMeasure_Discrete} is the $\ell^2(\Z)$ norm of the density of $\rho^n$. In the continuous setting, $I_n=0$ and so the same object is not useful. In addition, we found that the $L^2(\R)$ norm of the density did not have the same desirable properties as \eqref{overlapMeasure_Discrete}. Instead, we employ the new family of overlap measurements defined on probability measures $\nu$ on $\R$ by
\begin{equation}
    I(r, \nu) := \nu^{\otimes 2}\{|X-Y|<r\},\quad r>0.\label{overlapMeasure_Continuous}
\end{equation}
A similar but slightly different metric was used in \cite{Rovira-Tindel:MR2129770} in the continuous space and continuous time setting. In Section~\ref{decayOfFractionalMoments} we prove an important comparison property between $I(r,\nu)$ and $I(R,\nu)$ for $R\neq r$ and then use arguments from \cite{CSY03} to conclude.

Additionally, to simplify the presentation we will assume throughout this section that $\stepdensity$ has mean zero, i.e.\ $c:=\int_\R x \stepdensity(x)dx = 0.$ 
In fact, if $c\ne0$, then we can reduce this noncentered case to the centered one using the  stationarity of the environment:
\[Z^n = \mathrm{P}^m \left( e^{-\sum_{k = 0}^{m-1}F_k(S_k)}\right) \stackrel{d}{=} \tilde{Z}^n,\]
where $\tilde{Z}^n = \mathrm{P}^m \left( e^{-\sum_{k = 0}^{m-1}F_k(S_k-kc)}\right)$ is the partition function associated to the centered random walk density $\tilde{p}(x) = p(x - c)$,
and 
\[\frac{1}{n}\E[\log Z^n] = \frac{1}{n}\E[\log\tilde{Z}^n].\]

In the remainder of this section, we state several auxiliary lemmas and derive \Cref{thm:veryStrongDisorderTheorem} from them. The proofs of the auxiliary Lemmas are given in Sections~\ref{fractionalMomentUpperBound} and \ref{decayOfFractionalMoments}.
We must introduce some notation first. Let 
\begin{equation}
    W_{a,u}^{n,m} := Z_{a,u}^{n,m}e^{-\logExp(m-n)},\qquad  W^{m} := Z^{m} e^{-\logExp m}\label{normalizedPartitionFcnsDef}
\end{equation}
be the normalized point-to-point and point-to-line partition functions, respectively. Note that $(W^m)_{m\in \N}$ is a martingale, and in particular $\E[W^m] = 1$ for all $m.$ We define $W_{y,J}^{k,i} = \int_J W_{y,x}^{k,i}dx.$ For $\delta>0$, let 
\begin{equation}
    \mathcal{J}^\delta = \{J_k^\delta,\,k\in \Z\}, \label{defofScriptJ}
\end{equation}
where 
\begin{equation}
    J_k^\delta = [k\delta,(k+1)\delta).\label{defofJk}
\end{equation}
The collection $\mathcal{J}^\delta$ is a disjoint covering of $\R$ by intervals of length $\delta.$ We frequently omit the~$\delta$ superscript for brevity. In addition, constants denoted by $C$ or $C'$ may change line by line.

The following lemma can be seen as an approximate factorization of the partition function that allows us to relate the quenched free energy at step~$nm$ to a fractional moment at step $m$.
\begin{lemma}\label{factorizationLemma}
        For any $\delta > 0$, $\theta\in (0,1),$ and $m\in \N:$
        \[\frac{1}{nm}\E[\log W^{nm}] \le \frac{1}{\theta m}\log \sum_{J\in\mathcal{J}^\delta} \E\left[\left(\sup_{x\in J_0}W_{x,J}^{0,m}\right)^\theta \right].\]
\end{lemma}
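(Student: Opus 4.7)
The strategy is the classical fractional-moment method of \cite{CV06,CSY03}, but adapted to continuous space by discretizing the intermediate endpoints using the cells of $\mathcal{J}^\delta$. The four ingredients are: (i) a Chapman--Kolmogorov decomposition of $W^{nm}$ along the times $m,2m,\dots,nm$; (ii) a cellwise sup bound replacing the integrals by products of maxima over $J\in\mathcal{J}^\delta$; (iii) the subadditivity inequality~\eqref{thetaInequality} applied to convert the sum over cell sequences into a sum of $\theta$-powers; (iv) independence of the environment across disjoint time slabs plus space-time stationarity to factorize the expectation.

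First, using the Chapman--Kolmogorov identity for $W$ and splitting the endpoint integration over the partition $\mathcal{J}^\delta$, write
\[
W^{nm}=\sum_{k_1,\dots,k_n\in\Z}\int_{J_{k_1}\times\cdots\times J_{k_{n-1}}}W_{0,x_1}^{0,m}\Big(\prod_{i=1}^{n-2}W_{x_i,x_{i+1}}^{im,(i+1)m}\Big)W_{x_{n-1},J_{k_n}}^{(n-1)m,nm}\,dx_1\cdots dx_{n-1}.
\]
Integrating inductively from $x_{n-1}$ back to $x_1$, using the elementary bound $\int_{J_{k_{i+1}}}W_{x_i,x_{i+1}}^{im,(i+1)m}g(x_{i+1})dx_{i+1}\le W_{x_i,J_{k_{i+1}}}^{im,(i+1)m}\sup_{J_{k_{i+1}}}g$ and then $\sup_{x_i\in J_{k_i}}W_{x_i,J_{k_{i+1}}}^{im,(i+1)m}\le \sup_{y\in J_{k_i}}W_{y,J_{k_{i+1}}}^{im,(i+1)m}$, and noting $0\in J_0$, I obtain
\[
W^{nm}\le\sum_{k_1,\dots,k_n\in\Z}\Big(\sup_{y\in J_0}W_{y,J_{k_1}}^{0,m}\Big)\prod_{i=1}^{n-1}\Big(\sup_{y\in J_{k_i}}W_{y,J_{k_{i+1}}}^{im,(i+1)m}\Big).
\]

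Next, applying the inequality $(\sum_j a_j)^\theta\le\sum_j a_j^\theta$ valid for $\theta\in(0,1)$ to the sum over $(k_1,\dots,k_n)$, taking expectations, and invoking the independence of $F_k$ across disjoint time windows of length $m$ yields
\[
\E[(W^{nm})^\theta]\le\sum_{k_1,\dots,k_n\in\Z}\E\Big[\Big(\sup_{y\in J_0}W_{y,J_{k_1}}^{0,m}\Big)^\theta\Big]\prod_{i=1}^{n-1}\E\Big[\Big(\sup_{y\in J_{k_i}}W_{y,J_{k_{i+1}}}^{im,(i+1)m}\Big)^\theta\Big].
\]
The $\Prb$-invariance under the space-time shift $\theta^{im,k_i\delta}$ converts each factor with $i\ge 1$ into $\E[(\sup_{y\in J_0}W_{y,J_{k_{i+1}}-k_i\delta}^{0,m})^\theta]$, so writing $a_k:=\E[(\sup_{y\in J_0}W_{y,J_k}^{0,m})^\theta]$ and performing the change of variables $(\ell_0,\ell_1,\dots,\ell_{n-1})=(k_1,k_2-k_1,\dots,k_n-k_{n-1})$, the $n$-fold sum factors as $(\sum_\ell a_\ell)^n=(\sum_{J\in\mathcal{J}^\delta}\E[(\sup_{x\in J_0}W_{x,J}^{0,m})^\theta])^n$.

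Finally, Jensen's inequality gives $\E[\log W^{nm}]=\tfrac{1}{\theta}\E[\log(W^{nm})^\theta]\le\tfrac{1}{\theta}\log\E[(W^{nm})^\theta]$; dividing by $nm$ and combining with the displayed upper bound produces exactly the claimed inequality. No step is a serious obstacle; the only point requiring care is the inductive sup bound on the point-to-point integrals and checking that the space-time shifts produce a genuine convolution structure collapsing the $n$-fold sum into an $n$-th power.
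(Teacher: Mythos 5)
Your proposal is correct and follows essentially the same route as the paper's proof: discretize the intermediate endpoints at times $m,2m,\dots,(n-1)m$ by the cells of $\mathcal{J}^\delta$, bound each cellwise integral by a product of sups, apply $(\sum a_j)^\theta \le \sum a_j^\theta$, factorize the expectation by time-slab independence and space-time stationarity, and finish with Jensen. The only (cosmetic) difference is ordering: the paper applies Jensen and the $\theta$-subadditivity inequality first and then performs the inductive sup bound inside the expectation, whereas you establish the deterministic cellwise bound on $W^{nm}$ first and apply Jensen at the end—both yield the identical estimate.
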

The next result is an involved technical proposition that allows us to get rid of the supremum in \Cref{factorizationLemma} and replace it with evaluation of $W_{x,J_k}^{0,m}$ at $x=0.$
    \begin{lemma}\label{boundForSupW}
        Let $\varepsilon > 0.$ There is are numbers $\theta_0\in (0,1)$ and $\delta_0>0$ such that for all $\theta\in (\theta_0,1)$ and $\delta\in(0,\delta_0)$, there is a positive constant $C$ such that for all $m\in \N$ 
        \begin{equation}
            \sum_{J\in\mathcal{J}^\delta} \E\left[\left(\sup_{x\in J_0}W_{x,J}^{0,m}\right)^\theta \right] \le  C \sum_{J\in\mathcal{J}^\delta} \E\left[\left(W_{0,J}^{0,m}\right)^\theta \right] + C \delta^{-\frac{1}{2}-\varepsilon}m^{-\frac{1}{2}+\varepsilon} + C \delta^{\frac{3}{4}-\varepsilon}m^{\frac{7}{20}+\varepsilon}.\label{boundForSupWDisplay}
        \end{equation}
    \end{lemma}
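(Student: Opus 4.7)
The plan is to reduce the supremum over the initial point $x \in J_0 = [0,\delta)$ to the value at $x=0$ via a first-step Markov decomposition of the partition function, separating the $x$-dependence in the initial field from the $x$-dependence in the step density, and then using the monotonicity and boundedness properties of $\stepdensity$ from Assumption~\ref{densityAssumptions}.

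Concretely, I would start from the first-step identity
\[
W_{x,J_k}^{0,m} \;=\; e^{-F_0(x)-\logExp}\int_\R \stepdensity(y-x)\,W_{y,J_k}^{1,m}\,dy,
\]
which isolates the $x$-dependence in the factor $e^{-F_0(x)}$ and in the kernel $\stepdensity(y-x)$. For $x\in[0,\delta)$, I would replace $\stepdensity(y-x)$ by $\stepdensity(y)$ plus an error controlled using Assumption~\ref{densityMonotonicityReq}: on the two tails $(-\infty,L]$ and $[R,\infty)$ the monotonicity of $\stepdensity$ gives a one-sided comparison $\stepdensity(y-x)\le \stepdensity(y)\vee \stepdensity(y-\delta)$, and on the bulk $[L+\delta,R]$ the density is bounded away from zero so $\stepdensity(y-x)/\stepdensity(y)$ is uniformly bounded by a constant depending only on $\delta_0$. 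For the field, since $F_0\in C^1$ and $F_0$ is independent of $(F_k)_{k\ge 1}$, the factor $\sup_{x\in J_0} e^{-\theta F_0(x)}$ has finite expectation by Assumption~\ref{exponentialMomentAssumption} and can be pulled out of the later-time expectations by independence.

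Applying the concavity inequality $(a+b)^\theta \le a^\theta + b^\theta$ for $\theta\in(0,1)$ and taking expectations, the above decomposition gives
\[
\E\!\left[\sup_{x\in J_0}\!\left(W_{x,J_k}^{0,m}\right)^{\!\theta}\right] \;\le\; C\,\E\!\left[\left(W_{0,J_k}^{0,m}\right)^{\!\theta}\right] + C\,\mathcal{E}_k(\delta,m),
\]
where $\mathcal{E}_k(\delta,m)$ collects the contribution of the density-shift error and of the field oscillation over $J_0$. Summing over $k$ immediately produces the main term on the right-hand side of \eqref{boundForSupWDisplay}, so the remaining task is to show that $\sum_k \mathcal{E}_k(\delta,m)$ is bounded by $C\delta^{-1/2-\varepsilon}m^{-1/2+\varepsilon}+C\delta^{3/4-\varepsilon}m^{7/20+\varepsilon}$.

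For that sum I expect to split the range of $k$ into a typical Gaussian window $|k\delta|\lesssim m^{1/2+\varepsilon}$ and an atypical region $|k\delta|\gg m^{1/2}$, and apply different estimates on each. On the typical window, a local CLT for the $m$-step walk density (valid because of the $\momentnumber$-th moment bound in Assumption~\ref{momentsReq}, the boundedness in Assumption~\ref{densityBoundedReq}, and the tail monotonicity in Assumption~\ref{densityMonotonicityReq}) gives a density of order $m^{-1/2}$, and a Hölder step against the fractional power yields the first rate $\delta^{-1/2-\varepsilon}m^{-1/2+\varepsilon}$. On the atypical region, a tail estimate using the $\momentnumber$-th moment (Markov's inequality at power $\momentnumber$) supplies the polynomial decay that gives the second, more delicate rate $\delta^{3/4-\varepsilon}m^{7/20+\varepsilon}$ after an optimization of the cutoff between the two regimes against the power $\theta$. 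The main obstacle will be carrying through these two estimates uniformly in $k$ while matching the stated fractional exponents: the exponent $7/20$ in particular should come out of a careful balance between the $\momentnumber$-th moment tail cutoff and the density-shift error $O(\delta)$ on the bulk, and obtaining the correct $\varepsilon$-slack requires taking $\theta$ close enough to $1$ and $\delta$ small enough in a coupled way.
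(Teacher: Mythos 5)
Your overall strategy is the same as the paper's: perform the first-step decomposition, factor out $\sup_{x\in J_0}e^{-\theta F_0(x)}$ using independence of $F_0$ from later times, compare the shifted step density to the unshifted one using \Cref{densityMonotonicityReq}, apply the subadditivity inequality $(a+b)^\theta\le a^\theta+b^\theta$, and control the remainder by a local CLT estimate on $\mathrm{P}\{S_m\in J_k\}$ together with a split of the $k$-sum into a main region and a tail.

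There are, however, two places where the sketch leaves a real gap. First, the density comparison must be made quantitative: what the proof actually uses is that the set of $y$ for which $\sup_{|x|<\delta}\stepdensity(y-x)$ is \emph{not} within a fixed multiplicative constant of $\stepdensity(y)$ has $\lambda$-measure $O(\delta)$. Your tail/bulk observation (monotone tails, bulk bounded away from zero) is the right starting point, but the $O(\delta)$ measure estimate is the nontrivial content of \Cref{holderRegOfSpatialShift}, and its proof on the tails requires chaining the inequality $\stepdensity(x_n)\ge r_0\stepdensity(x_{n+2})$ along a geometric subdivision; the pointwise bound $\stepdensity(y-x)\le\stepdensity(y)\vee\stepdensity(y-\delta)$ alone is not enough, and this $O(\delta)$ bound is precisely where the positive power of $\delta$ in the error term comes from. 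Second, your proposed cutoff $|k\delta|\lesssim m^{1/2+\varepsilon}$ does not produce the stated exponents. The paper uses the purely $\delta$-dependent threshold $|k|\le\delta^{-5/2}$, followed (on the far region) by H\"older with exponents $1/5,\,4/5$ to peel off $\delta^{1/5}$ from the bad-set probability and Markov at power $r=1/2$ against the local-CLT kernel; these choices produce $\delta^{-1/2-\varepsilon}m^{-1/2+\varepsilon}$ and $\delta^{3/4-\varepsilon}m^{7/20+\varepsilon}$. Running the same computation with your $m$-dependent cutoff, the integral $\int_{m^{-1/2}\delta K}^\infty(1+z^2)^{-4\theta/5}\,dz$ is evaluated at $m^{-1/2}\delta K\sim m^{\varepsilon}$, giving a far-region contribution of roughly $\delta^{\theta-1}m^{1/2-2\theta/5-3\varepsilon/5}\sim m^{1/10}$ for $\theta$ near $1$, with no compensating power of $\delta$. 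Since the lemma must hold uniformly in $(\delta,m)$ — and in the eventual application $\delta\sim m^{-h}$ with $h$ near $1/2$, so $\delta\ll m^{-1/3}$ — this term is not dominated by $\delta^{3/4-\varepsilon}m^{7/20+\varepsilon}$. You should replace the $m$-dependent cutoff by the $\delta$-only one and keep the explicit H\"older/Markov exponents.
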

    The next proposition replaces the sum of partial fractional moments on the right-hand side of \eqref{boundForSupWDisplay} with the fractional moment $\E[(W^m)^\theta]$.
\begin{lemma}\label{boundForW0}
    Let $\varepsilon>0$. There is a number $\theta_0\in (0,1)$ such that for all $\theta\in(\theta_0,1)$ there is a positive constant $C$ such that for all $m\in\N$ and $\delta\in (0,1)$,
    \begin{equation}
    \label{eq:boundForW0}
    \sum_{J\in\mathcal{J}^\delta} \E\left[\left(W_{0,J}^{0,m}\right)^\theta \right]\le  C \left(\delta^{-1+\frac{1}{2}\varepsilon} m^{\frac{1}{2}-\frac{1}{4}\varepsilon}(\E[(W^m)^\theta])^{1-\varepsilon}\right)^{\frac{1}{2-\varepsilon}}.
    \end{equation}
\end{lemma}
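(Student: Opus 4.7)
Write $Y := \E[(W^m)^\theta]$; since $W^m$ is a nonnegative martingale with mean $1$, we have $Y \le 1$. The plan is to split the sum over $J \in \mathcal{J}^\delta$ into a bulk piece over $\mathcal{J}_0 := \{J \in \mathcal{J}^\delta : J \cap [-R,R] \neq \emptyset\}$ and a tail piece over the rest, then to optimize the cutoff $R > 0$. Observe that $|\mathcal{J}_0| \le 2R/\delta + 1$ and $\sum_{J\in\mathcal{J}_0} W_{0,J}^{0,m} \le W^m$.

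For the bulk, concavity of $t\mapsto t^\theta$ together with the power mean inequality gives the deterministic estimate
\[
\sum_{J \in \mathcal{J}_0}(W_{0,J}^{0,m})^\theta \le |\mathcal{J}_0|^{1-\theta}\Bigl(\sum_{J \in \mathcal{J}_0} W_{0,J}^{0,m}\Bigr)^\theta \le (3R/\delta)^{1-\theta}(W^m)^\theta,
\]
so after taking expectations the bulk contribution is at most $(3R/\delta)^{1-\theta}Y$. For the tail, Jensen's inequality gives the annealed bound $\E[(W_{0,J}^{0,m})^\theta] \le \E[W_{0,J}^{0,m}]^\theta = \mathrm{P}(S_m \in J)^\theta$. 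Since $\stepdensity$ is centered (by the reduction at the start of Section~\ref{sec:main_very_strong_disorder}), a Marcinkiewicz--Zygmund inequality combined with \Cref{momentsReq} yields $\E|S_m|^\momentnumber \le Cm^{\momentnumber/2}$, so Markov gives $\mathrm{P}(|S_m| \ge t) \le Cm^{\momentnumber/2}t^{-\momentnumber}$. Summing $\mathrm{P}(S_m \in J_k)^\theta$ over $|k|\delta \ge R$---convergent provided $\momentnumber\theta > 1$---produces a tail bound of order $Cm^{\momentnumber\theta/2}\delta^{-1}R^{1-\momentnumber\theta}$.

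It remains to choose $R$ so that both pieces are dominated by the target $\delta^{-1/2}m^{1/4}Y^{(1-\varepsilon)/(2-\varepsilon)}$, which is the simplified form of the right-hand side of~\eqref{eq:boundForW0} (the outer $1/(2-\varepsilon)$ power converts $\delta^{-1+\varepsilon/2}$ into $\delta^{-1/2}$ and $m^{1/2-\varepsilon/4}$ into $m^{1/4}$ exactly). The tail constraint forces $R^{\momentnumber\theta - 1} \gtrsim m^{\momentnumber\theta/2-1/4}\delta^{-1/2}Y^{-(1-\varepsilon)/(2-\varepsilon)}$, so $R$ scales like $m^{1/2}$ times slowly growing corrections in $\delta^{-1}, Y^{-1}$ whenever $\momentnumber\theta - 1$ is bounded away from $0$. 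Plugging this $R$ back into the bulk estimate and collecting exponents yields that the bulk, up to similarly mild correction factors, scales like $m^{(1-\theta)/2}\delta^{-(1-\theta)}Y$, which sits under the target since $(1-\theta)/2 \le 1/4$, $-(1-\theta)\ge -1/2$ when $\theta\ge 1/2$, and $Y \le Y^{(1-\varepsilon)/(2-\varepsilon)}$ because $Y \le 1$.

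The main obstacle is the final step: three exponent inequalities---one each on $m$, $\delta$, and $Y$---must hold simultaneously with the selected $R$. The $m$ and $\delta$ conditions reduce to $\theta \ge 1/2$, and the $Y$ condition amounts to $\momentnumber\theta - 1 \ge (1-\theta)(1-\varepsilon)$. All three are satisfied once $\theta_0$ is taken sufficiently close to $1$ depending on $\varepsilon$ and on the moment exponent $\momentnumber > 2$ from \Cref{momentsReq}, with the constant $C$ absorbing the slowly growing correction factors arising from $(1-\theta)/(\momentnumber\theta - 1)$.
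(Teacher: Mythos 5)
Your proof is correct and arrives at the same conclusion, but it replaces the paper's key technical ingredient for the tail estimate with a different one. Both proofs use the same overall structure: split the sum over $J\in\mathcal{J}^\delta$ at a cutoff into a bulk piece bounded by $(\text{number of terms})\cdot\E[(W^m)^\theta]$ via the power-mean inequality, and a tail piece bounded by the annealed estimate $\E[(W_{0,J}^m)^\theta]\le(\mathrm{P}^m\{S_m\in J\})^\theta$, then optimize the cutoff. Where you differ is the bound on $\mathrm{P}^m\{S_m\in J_k\}$ in the tail: the paper invokes \Cref{mStepDensityBound}, the non-uniform local limit theorem estimate $\mathrm{P}^m\{S_m\in J_k\}\le C\delta m^{-1/2}/(1+k^2\delta^2/m)$, whereas you discard the interval and use only the tail probability $\mathrm{P}(|S_m|\ge |k|\delta)\le Cm^{\momentnumber/2}(|k|\delta)^{-\momentnumber}$ from Marcinkiewicz--Zygmund and Markov (which the paper itself uses in the proof of \Cref{lem:fractionalMomentDecay}). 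Your version is more elementary and avoids the LLT machinery, at the cost of a tail estimate that is much weaker for the near-bulk terms (no $\delta m^{-1/2}$ density factor); this is compensated by the polynomial decay $|k|^{-\momentnumber\theta}$ being summable for $\theta$ close to $1$. The exponent bookkeeping checks out: the $Y$-constraint $\momentnumber\theta-1\ge(1-\theta)(1-\varepsilon)$ and the $m$, $\delta$ constraints all hold once $\theta_0$ is pushed close to $1$ depending on $\varepsilon$ and $\momentnumber$, exactly as claimed (though the reduction of the $m$ and $\delta$ conditions to ``$\theta\ge 1/2$'' is a slight oversimplification; they genuinely require $\theta$ near $1$ because of the $(\momentnumber\theta-1)^{-1}$ factors in $R$). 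One additional small sharpening in your argument is using $|\mathcal{J}_0|^{1-\theta}$ rather than $|\mathcal{J}_0|$ in the bulk, which the paper does not do but does not need either.
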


We will also need the following proposition giving a decay rate of the fractional moment $\E[(W^m)^{\theta}].$ Recall that $\momentnumber$ is the number of moments of $\lambda$ as stated 
in \Cref{momentsReq}.
\begin{lemma}\label{lem:fractionalMomentDecay}
    Let $\varepsilon>0.$ There is number $\theta_0\in(0,1)$ such that if $\theta\in (\theta_0,1)$ then there is a positive constant $C$ such that for all $n\in \N,$
    \[\E[(W^{n})^\theta] \le C n^{-\frac{1}{2} \momentnumber + \varepsilon}.\]
\end{lemma}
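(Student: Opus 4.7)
The plan is to adapt the fractional moment decay argument of Carmona--Shiga--Yoshida \cite{CSY03} to our continuous-space setting. In \cite{CSY03}, the fractional moment $a_n := \E[(W^n)^\theta]$ is controlled through a multiplicative recursion involving the discrete replica overlap $(\rho^n)^{\otimes 2}\{X = Y\}$. Since that quantity vanishes identically in continuous space, I would replace it by the continuous overlap $I(r,\nu) = \nu^{\otimes 2}\{|X-Y|<r\}$ from \eqref{overlapMeasure_Continuous} and exploit the comparison between $I(r,\cdot)$ and $I(R,\cdot)$ at different scales, which is the technical content of the lemmas alluded to in Section~\ref{decayOfFractionalMoments}.

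Concretely, starting from the one-step decomposition $W^{n+1} = W^n M_n$ with $M_n := \int e^{-F_n(y) - \logExp}\rho^n_0(dy)$ and $\E[M_n \mid \mathcal{F}_n] = 1$, a Taylor expansion of $x\mapsto x^\theta$ around $x = 1$ (with integrable remainder thanks to the exponential moment bound in \Cref{continuousEnvironmentAssumptions}) combined with the positive correlation and finite dependence range assumptions of \Cref{continuousEnvironmentAssumptions} should yield
\[
    \E\bigl[M_n^\theta \bigm| \mathcal{F}_n\bigr] \le 1 - c_\theta\, I(M,\rho^n),
\]
for some $c_\theta > 0$, where $M$ is the range of dependence. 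Iterating and taking expectations then gives
\[
    \E[(W^n)^\theta] \le \E\!\left[\exp\!\left(-c_\theta \sum_{k=0}^{n-1} I(M,\rho^k)\right)\right].
\]
To extract polynomial decay from this recursion one needs a lower bound of the form $\E[I(M,\rho^k)\mid \text{good event}] \gtrsim k^{-1/2}$, reflecting the typical diffusive spreading of the endpoint measure on scale $\sqrt{k}$.

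The specific exponent $\momentnumber/2$ enters through the complementary tail event $\{|S_n|>R\}$: by the Marcinkiewicz--Zygmund inequality and the moment condition in \Cref{momentsReq}, $\mathrm{P}(|S_n|>R) \le C n^{\momentnumber/2}/R^\momentnumber$, so choosing $R\asymp n^{1-\varepsilon/\momentnumber}$ together with the $\theta$-inequality applied to the decomposition $W^n = W^n_{|S_n|\le R} + W^n_{|S_n|>R}$ controls the tail piece by $\mathrm{P}(|S_n|>R)^\theta \lesssim n^{-\momentnumber/2 + \varepsilon}$ for $\theta$ close enough to $1$. The bulk piece $\E[(W^n_{|S_n|\le R})^\theta]$ is handled by the CSY03-type recursion above, yielding decay much faster than any polynomial. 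The hardest part will be the quenched lower bound on $I(M,\rho^k)$: on the very-strong-disorder event $\rho^k$ may be extremely peaked, so bounding below the overlap at the fixed range $M$ in terms of the spreading of $\rho^k$ on scale $\sqrt{k}$ is precisely where the comparison estimates between $I(r,\nu)$ and $I(R,\nu)$ at scales $r = M$ and $R \asymp \sqrt{k}$ developed in Section~\ref{decayOfFractionalMoments} become essential.
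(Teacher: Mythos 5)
You correctly identify the main ingredients the paper uses: the continuous overlap $I(r,\nu)$ from \eqref{overlapMeasure_Continuous}, the multi-scale comparison of overlaps (\Cref{thm:probComparison}), the CSY03-style elementary inequality $(u+1)^\theta-\theta u-1\le -c\,u^2/(2+u)$, and the Marcinkiewicz--Zygmund bound $\mathrm{P}^n\{|S_n|\ge t\}\lesssim n^{\momentnumber/2}t^{-\momentnumber}$. Your conditional one-step estimate $\E[M_n^\theta\mid\mathcal F_n]\le 1-c_\theta I(r,\rho^n)$ is also essentially what the paper establishes (it is the content of \eqref{conditionalU2LowerBound} and \eqref{uthetaDeterministicBound}).

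However, the step ``iterating and taking expectations then gives $\E[(W^n)^\theta]\le \E[\exp(-c_\theta\sum_{k<n}I(M,\rho^k))]$'' does not follow. From $\E[M_n^\theta\mid\mathcal F_n]\le 1-c_\theta I(M,\rho^n)\le e^{-c_\theta I(M,\rho^n)}$ one obtains only that $(W^n)^\theta\exp(c_\theta\sum_{k<n}I(M,\rho^k))$ is a supermartingale, hence $\E\bigl[(W^n)^\theta\exp(c_\theta\sum_{k<n}I(M,\rho^k))\bigr]\le 1$; the weight $(W^n)^\theta$ cannot be discarded, and dropping it gives only the trivial bound $\E[(W^n)^\theta]\le 1$. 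The subsequent plan --- conditioning on a good event and lower-bounding $\E[I(M,\rho^k)\mid\text{good}]\gtrsim k^{-1/2}$ --- is exactly the quantity the paper avoids having to estimate. The paper's resolution is \Cref{thm:expofWILowerBound}: rather than bounding the random overlap $I(r,\rho^n)$ itself, it bounds the \emph{weighted} quantity, showing directly that
\[
\E\bigl[(W^n)^\theta I(r,\rho^n)\bigr]\;\ge\; \frac{C_1 r}{R}\,\E[(W^n)^\theta]\;-\;\frac{C_2 r}{R}\bigl(\mathrm{P}^n\{|S_n|\ge R\}\bigr)^\theta,
\]
by combining the scale comparison $I(r,\rho^n)\gtrsim \frac{r}{R}\,\rho^n([-R,R))^2$ with $\rho^n([-R,R))^2\ge 1-2\rho^n([-R,R)^c)^\theta$ and the Jensen/martingale identity $\E[(W^n)^\theta\rho^n([-R,R)^c)^\theta]\le(\mathrm{P}^n\{|S_n|>R\})^\theta$. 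Plugged into the one-step bound, this yields the deterministic linear recursion of \Cref{recursiveBound}, $\E[(W^{n+1})^\theta]\le(1-C_1/t)\E[(W^n)^\theta]+ (C_2/t)\mathrm{P}^n\{|S_n|\ge t\}^\theta$, and one then optimizes $t=t_n=n^\beta$ and runs a direct induction to get the $n^{-\momentnumber/2+\varepsilon}$ rate. Relatedly, your alternative decomposition $W^n=W^n_{|S_n|\le R}+W^n_{|S_n|>R}$ does not obviously close either, because the bulk piece is not a martingale under the one-step update and so does not admit the same recursion.
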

The proofs of Lemmas~\ref{factorizationLemma}--\ref{boundForW0} are given in Section~\ref{fractionalMomentUpperBound}. The proof of 
\Cref{lem:fractionalMomentDecay} is given in Section~\ref{decayOfFractionalMoments}.

\begin{proof}[Proof of \Cref{thm:veryStrongDisorderTheorem}]

Let $h \in (0,1)$ and $\varepsilon>0$, to  be chosen later. Let $\delta_m = m^{-h}$. Taking $\varepsilon < \frac{1}{2}\momentnumber - 1,$ we can use \Cref{lem:fractionalMomentDecay} to see that for $m$ sufficiently large,
\begin{equation}
    \E[(W^{m})^\theta] < m^{-1}.\label{fractionalMomentOneOverNBound}
\end{equation}

Lemmas~\ref{factorizationLemma}--\ref{boundForW0} and \eqref{fractionalMomentOneOverNBound} imply that there is a constant $\theta\in(0,1)$ and a positive constant $C$ such that for all $m$ sufficiently large,
\begin{align}
    \frac{1}{nm}&\E[\log W^{nm}]\notag\\
    & \le \frac{1}{\theta m}\log \left(C \left(\delta_m^{-1+\frac{1}{2}\varepsilon} m^{\frac{1}{2}-\frac{1}{4}\varepsilon}(\E[(W^m)^\theta])^{1-\varepsilon}\right)^{\frac{1}{2-\varepsilon}} + C \delta_m^{-\frac{1}{2}-\varepsilon}m^{-\frac{1}{2}+\varepsilon} + C \delta_m^{\frac{3}{4}-\varepsilon}m^{\frac{7}{20}+\varepsilon}\right)\notag\\
    & = \frac{1}{\theta m}\log \left(C \left(m^{h - \frac{1}{2}h\varepsilon+\frac{1}{2}-\frac{1}{4}\varepsilon}(\E[(W^m)^\theta])^{1-\varepsilon}\right)^{\frac{1}{2-\varepsilon}} + C m^{\frac{1}{2}h-\frac{1}{2}+ h\varepsilon+\varepsilon} + C m^{-\frac{3}{4}h+\frac{7}{20}+h\varepsilon+\varepsilon}\right).\label{freeEnergyFirstUpperBound}
    \\
  &\le  \frac{1}{\theta m}\log \left(C\left(m^{h - \frac{1}{2} -\frac{1}{2}h\varepsilon+\frac{3}{4}\varepsilon}\right)^{\frac{1}{2-\epsilon}} + C m^{\frac{1}{2}h-\frac{1}{2}+ h\varepsilon+\varepsilon} + C m^{-\frac{3}{4}h+\frac{7}{20}+h\varepsilon+\varepsilon}\right).\notag 
\end{align}
Choose $h\in (0,\frac{1}{2})$ close enough to $\frac{1}{2}$ such that $-\frac{3}{4}h + \frac{7}{20} < 0$. For instance $h = \frac{29}{60}$ suffices. Then, choose $\varepsilon > 0$ so that 
\[h - \frac{1}{2} - \frac{1}{2}h\varepsilon+\frac{3}{4}\varepsilon < 0,\]
\[\frac{1}{2}h-\frac{1}{2}+ h\varepsilon+\varepsilon < 0,\]
and
\[-\frac{3}{4}h+\frac{7}{20}+h\varepsilon+\varepsilon < 0.\]
Then all the powers of $m$ in \eqref{freeEnergyFirstUpperBound} are negative, so
\[\frac{1}{nm}\E[\log W^{nm}] \le \frac{1}{\theta m} \log \left(o_m(1) \right), \quad m\to \infty.\]
Taking $m$ large enough, we obtain that there is a constant $\Delta$ such that
\[\frac{1}{nm}\E[\log W^{nm}] < \Delta < 0,\quad n\in\N.\]
Combining this with  $\frac{1}{nm}\E[\log W^{nm}] = \frac{1}{nm}\E[\log Z^{nm}] - \logExp$, we obtain~\eqref{liminfReduction}  and complete the proof of the theorem.
\end{proof}

\subsection{Fractional Moment Upper Bounds for Free Energy}\label{fractionalMomentUpperBound}
In this section, we prove Lemmas~\ref{factorizationLemma}--\ref{boundForW0}.

We first prove a short lemma concerning the expectation of the normalized partition function. 
\begin{lemma}\label{thm:expectedNormalizedPartitionFcn}
    If $U\subset \R$ is a measurable set, $k,m\in\Z$, $k<m$, and $x\in \R,$ then 
    \begin{equation}
        \E[W_{x,U}^{k,m}] = \mathrm{P}^{k,m}_x\{S_m\in U\}.\label{expectedNormalizedPartitionFcn}
    \end{equation}
\end{lemma}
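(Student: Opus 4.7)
The plan is to compute $\E[W_{x,U}^{k,m}]$ directly from the definitions by using Fubini's theorem to interchange the expectation over the environment with the path/space integration, and then using the independence-in-time and spatial-stationarity of $F$ to evaluate the resulting expectations termwise. First, I would unfold $W_{x,U}^{k,m}=\int_U Z_{x,u}^{k,m}e^{-\logExp(m-k)}\,du$ and insert the explicit expression for $Z_{x,u}^{k,m}$ from \eqref{eq:p2pPartitionFunctionDef}, obtaining
\[
W_{x,U}^{k,m} = e^{-\logExp(m-k)}\int_U\!\int_{\R^{m-k+1}} \prod_{j=k}^{m-1} e^{-F_j(x_j)}\stepdensity(x_{j+1}-x_j)\,(\delta_x\otimes\gamma^{\otimes(m-k-1)}\otimes\delta_u)(dx_k,\dots,dx_m)\,du.
\]
Since the integrand is nonnegative, Fubini applies and the expectation commutes with all the integrals.

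Next, I would use that the collection $(F_j)_{j\in\Z}$ is independent, so the expectation of the product factorizes:
\[
\E\Big[\prod_{j=k}^{m-1} e^{-F_j(x_j)}\Big] = \prod_{j=k}^{m-1} \E\bigl[e^{-F_j(x_j)}\bigr].
\]
By space-time stationarity of $F$, each factor equals $\E[e^{-F_0(0)}] = e^{\logExp}$ independently of $x_j$ and $j$. Thus the product over $j=k,\dots,m-1$ contributes $e^{\logExp(m-k)}$, which exactly cancels the prefactor $e^{-\logExp(m-k)}$. The remaining integral
\[
\int_U\!\int_{\R^{m-k+1}} \prod_{j=k}^{m-1} \stepdensity(x_{j+1}-x_j)\,(\delta_x\otimes\gamma^{\otimes(m-k-1)}\otimes\delta_u)(dx_k,\dots,dx_m)\,du
\]
is, by the definition of $\mathrm{P}^{k,m}_x$ as the law of the random walk started at $x$ with step density $p$, precisely $\mathrm{P}^{k,m}_x\{S_m\in U\}$. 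This yields \eqref{expectedNormalizedPartitionFcn}.

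There is no serious obstacle here: the only mild subtlety is ensuring that the Fubini interchange is legitimate, which follows immediately from the nonnegativity of the integrand (so we do not even need an $L^1$ estimate), and making sure the normalization $\logExp$ absorbs the $e^{-F}$ factors on the nose, which it does by construction in \eqref{normalizedPartitionFcnsDef}.
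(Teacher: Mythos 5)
Your proof is correct and follows essentially the same route as the paper's: both apply Fubini to bring the expectation inside the path/space integral, then use independence in time and spatial stationarity to reduce each factor $\E[e^{-F_j(x_j)}]$ to $e^{\logExp}$, which cancels the normalization and leaves the random walk probability. The paper simply packages the intermediate steps more compactly via the $\mathrm{P}_{x,y}^{k,m}$ notation, but the mechanics are identical.
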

\begin{proof}
    We have 
    \begin{align}
        \E[W_{x,U}^{k,m}] & = \E \int_U W_{x,y}^{k,m}dy\notag\\
        & = \E \int_U \mathrm{P}_{x,y}^{k,m}\left(e^{-\sum_{i=k}^{m-1}F_i(S_i)} e^{- (m-k)\logExp}\right) dy\notag\\
        & = \int_U \mathrm{P}_{x,y}^{k,m}\left(\E\left[e^{-\sum_{i=k}^{m-1}F_i(S_i)} e^{- (m-k)\logExp} \right] \right)dy\notag\\
        & = \int_U \mathrm{P}_{x,y}^{k,m} (1)dy\notag\\
        & = \mathrm{P}_x^{k,m}\{S_m\in U\},\notag 
    \end{align}
    proving \eqref{expectedNormalizedPartitionFcn}.
\end{proof}

\begin{proof}[Proof of \Cref{factorizationLemma}]

    For $x_1,\dots, x_n\in \R$, we define 
    \[W_{x_1,\dots,x_n}^m :=W_{0,x_1}^{0,m}\cdots W_{x_{n-1},x_n}^{(n-1)m,nm}.\] 
    Then, we use Jensen's inequality and~\eqref{thetaInequality}:
\begin{align}\notag
    \frac{1}{nm}\E\log W^{nm} & = \frac{1}{nm}\E\log \sum_{J_{k_1},\dots,J_{k_n} \in \mathcal{J}^\delta} \int_{J_{k_1}\times \cdots \times J_{k_n}}W_{x_1,\dots,x_n}^m dx\\
    \label{eq:discretizing_W}
    & = \frac{1}{\theta nm}\E\log \bigg(\sum_{J_{k_1},\dots,J_{k_n}\in \mathcal{J}^\delta}\int_{J_{k_1}\times \cdots\times J_{k_n}}W_{x_1,\dots,x_n}^m dx\bigg)^\theta\\
    & \le \frac{1}{\theta n m}\log \sum_{J_{k_1},\dots,J_{k_n}\in \mathcal{J}^\delta}\E\bigg(\int_{J_{k_1}\times \cdots \times J_{k_n}}W_{x_1,\dots,x_n}^m dx\bigg)^\theta.\notag
\end{align}
We can use an inductive argument to obtain a product
estimate on each of the integrals on the right-hand side:
\begin{align*}
    \int_{J_{k_1}\times \cdots \times J_{k_n}}&W_{x_1,\dots,x_n}^m dx  = \int_{J_{k_1}\times \cdots \times J_{k_{n-1}}}W_{x_1,\dots,x_{n-1}}^{m-1}\int_{J_{k_n}}W_{x_{n-1},x_n}^{(n-1)m,nm}dx_{n} dx_1\dots dx_{n-1}\\
    & \le \int_{J_{k_1}\times \cdots\times J_{k_{n-1}}}W_{x_1,\dots,x_{n-1}}^{m-1} dx_1\dots dx_{n-1}\cdot \sup_{x_{n-1}\in J_{k_{n-1}}}\int_{J_{k_n}}W_{x_{n-1},x_n}^{(n-1)m,nm}dx_{n}\\
    & \le \dots\le \prod_{\ell=1}^n\sup_{y\in J_{k_{\ell-1}}}\int_{J_{k_\ell}} W_{y,x}^{(\ell-1)m,\ell m}dx.
\end{align*}
By the independence of the environment at different times and spatial stationarity,
\begin{align*}
    \E\bigg(\int_{J_{k_1}\times \cdots \times J_{k_n}}W_{x_1,\dots,x_n}^m dx\bigg)^\theta & \le\prod_{\ell=1}^n\E\bigg(\sup_{y\in J_{0}}W_{y,(J_\ell-k_{\ell-1}\delta)}^{0,m}\bigg)^\theta.
\end{align*}
So,
\begin{align*}
    \sum_{J_{k_1},\dots,J_{k_n}\in \mathcal{J}^\delta}\E\bigg(\int_{J_{k_1}\times \cdots \times J_{k_n}}W_{x_1,\dots,x_n}^m dx\bigg)^\theta \le \bigg( \sum_{J_k\in \mathcal{J}^\delta}\E\bigg(\sup_{y\in J_{0}}W_{y,J_k}^{0,m}\bigg)^\theta \bigg)^n.
\end{align*}
Using this estimate on the right-hand side of \eqref{eq:discretizing_W}, we complete the proof.
\end{proof}

\begin{proof}[Proof of \Cref{boundForSupW}]
    
First, note that 
\[\sup_{|x|<\delta}W_{x,J_k}^{0,m} \le \sup_{|x|<\delta} e^{- F_0(x)} \cdot \sup_{|x|<\delta} W_{x,J_k}^{0,m} e^{ F_0(x)}.\]
Also, $W_{x,J_k}^{0,m} e^{ F_0(x)}$ is in fact independent of $F_0.$ From this independence, we get 
\begin{align}
    \E\bigg(\sup_{|x|<\delta}W_{x,J_k}^{0,m}\bigg)^\theta& \le \E\bigg(\sup_{|x|<\delta}\{ e^{- F_0(x)}\} \cdot \sup_{|x|<\delta}\{W_{x,J_k}^{0,m} e^{ F_0(x)}\}\bigg)^\theta\notag\\
    & = \E\bigg(\sup_{|x|<\delta}e^{-\theta  F_0(x)}\bigg) \E\bigg(\sup_{|x|<\delta}W_{x,J_k}^{0,m} e^{ F_0(x)}\bigg)^\theta\notag\\
    &= \E\bigg(\sup_{|x|<\delta}e^{-\theta  F_0(x)}\bigg) \bigg[\E\bigg(e^{-\theta  F_0(0)}\bigg)\bigg]^{-1}\E\bigg(e^{-\theta  F_0(0)}\bigg)\E\bigg(\sup_{|x|<\delta}W_{x,J_k}^{0,m} e^{ F_0(x)}\bigg)^\theta\notag\\
    &\le C_0 \E\bigg(\sup_{|x|<\delta}W_{x,J_k}^{0,m} e^{ F_0(x)- F_0(0)}\bigg)^\theta,\label{C0firstUppberBound}
\end{align}
where
\[C_0 := \E\bigg(\sup_{|x|<1}e^{-\theta  F_0(x)}\bigg) \bigg[\E\bigg(e^{-\theta  F_0(0)}\bigg)\bigg]^{-1}.\]
Note that we assume here $\delta < 1$ so that the constant $C_0$ does not depend on $\delta.$ 

We define 
\begin{equation}
\label{defOfDensityRatio}
    \densityRatio = 2\frac{\sup_{x\in \R}\stepdensity(x)}{\inf_{x\in [L,R]}\stepdensity(x)},
\end{equation}
where $L$ and $R$ are as in \Cref{densityMonotonicityReq}. \Cref{densityMonotonicityReq,densityBoundedReq} imply that $\densityRatio < \infty.$
We also define  
\[A_\delta := \Big\{y\in \R\,:\, \sup_{|x|<\delta} p(y-x)\ge \densityRatio^2 p(y)\Big\}.\] 
 Using \eqref{thetaInequality},  \eqref{annealedBoundDef}, the independence of $W_{x,J_k}^{0,m} e^{ F_0(x)}$ from $F_0$, Jensen's inequality, bounding the integrand pointwise, and \Cref{thm:expectedNormalizedPartitionFcn} we obtain
\begin{align}
  \label{AdeltaBound}
    \E&\left(\sup_{|x|<\delta}\{W_{x,J_k}^{0,m}e^{ F_0(x)- F_0(0)}\}\right)^\theta = \E\left(\sup_{|x|<\delta}\{Z_{x,J_k}^{0,m}e^{ F_0(x)- F_0(0) - m\logExp}\}\right)^\theta \\
    & = \E\left(\sup_{|x|<\delta}\int_\R p(y-x) Z_{y,J_k}^{1,m} e^{- F_0(0)-\logExp m}dy\right)^\theta\notag\\
    & = \E\left(\sup_{|x|<\delta}\int_\R p(y-x) W_{y,J_k}^{1,m} e^{- F_0(0)-\logExp}dy\right)^\theta\notag\\
    & \le \E \left(\int_{A_\delta^c} \sup_{|x|<\delta} p(y-x) W_{y,J_k}^{1,m} e^{- F_0(0)-\logExp}dy \right)^\theta + \E \left(\int_{A_\delta} \sup_{|x|<\delta}p(y-x) W_{y,J_k}^{1,m} e^{- F_0(0)-\logExp}dy\right)^\theta\notag\\
    & \le \densityRatio^{2\theta}\E \left(\int_{A_{\delta}^c} p(y) W_{y,J_k}^{1,m} e^{- F_0(0)-\logExp}dy \right)^\theta + R_k^\theta\notag\\
    & \le \densityRatio^{2\theta} \E[(W_{0,J_k}^{0,m})^\theta] + R_k^\theta,\notag
\end{align}
where 
\begin{equation}
    R_k : = \int_{A_\delta} \sup_{|x|<\delta} p(y-x)\mathrm{P}_y^{1,m}\{S_m\in J_k\}dy.
\end{equation}

The lemma will follow once we find $\theta_0\in(0,1)$ such that for all $\theta\in(\theta_0,1),$
\begin{equation}
    \sum_{k\in \Z} R_k^\theta = O(\delta^{-\frac{1}{2}-\varepsilon}m^{-\frac{1}{2}+\varepsilon} + \delta^{\frac{3}{4}-\varepsilon}m^{\frac{7}{20}+\varepsilon}).\label{boundRkGoal}
\end{equation}
The lemma will follow from \eqref{boundRkGoal}, \eqref{AdeltaBound}, and \eqref{C0firstUppberBound}.

Let us take any $D\in\R$ and $\delta_0 > 0$ such that $(D-\delta_0,D+\delta_0)\subset (L,R)$. We claim that for $\delta\in (0,\delta_0)$,
\begin{align}
    \sup_{|x|<\delta}\stepdensity(y-x) \le \densityRatio \stepdensity(y-\delta),\qquad \forall y\ge D\label{supBUpperBound}\\
    \sup_{|x|<\delta} \stepdensity(y-x) \le \densityRatio \stepdensity(y+\delta),\qquad  \forall y\le D.\label{supBLowerBound}
\end{align}
We verify \eqref{supBUpperBound}, and the analysis for \eqref{supBLowerBound} is similar. Note that for $y\in [R+\delta,\infty),$
\[\sup_{|x|<\delta} \stepdensity(y-x) \le \stepdensity(y-\delta)\]
since $p$ is nonincreasing on $[R,\infty)$ by \Cref{densityMonotonicityReq}. Also, for all $y\in [D,R+\delta),$ since $\delta < \delta_0$ we have 
\[\stepdensity(y-\delta) \ge \inf_{z\in [L,R]}\stepdensity(z)\] and 
\[\sup_{|x|<\delta}\stepdensity(y-x) \le \sup_{z\in \R} \stepdensity(z).\] 
This implies~\eqref{supBUpperBound} since
\begin{align*}
    \sup_{|x|<\delta}\stepdensity(y-x) \le \frac{\sup_{x\in \R}\stepdensity(y-x) }{\inf_{x\in [L,R]} \stepdensity(y-x)} \stepdensity(y-\delta) < \densityRatio \stepdensity(y-\delta).
\end{align*}
By \eqref{supBUpperBound} and \eqref{supBLowerBound},
\begin{align}
\label{eq:estimating_R_k}
    R_k \le \densityRatio R_k^- + \densityRatio R_k^+,
\end{align}
where
\begin{align}
\label{defOfRkMinus}
    R_k^- &= \int_{A_\delta\cap [D,\infty)} \stepdensity(y-\delta){P}_y^{1,m}\{S_m\in J_k\}dy,
    \\
\label{defOfRkPlus}
    R_k^+  &= \int_{A_\delta\cap (-\infty,D)} \stepdensity(y+\delta){P}_y^{1,m}\{S_m\in J_k\}dy.
\end{align}
One can view these integrals as expectations with respect to the random walk measures $\mathrm{P}_\delta^{0,m}$ and $\mathrm{P}_{-\delta}^{0,m}$. 

Due to \eqref{eq:estimating_R_k} -- \ref{defOfRkMinus}, the estimate \eqref{boundRkGoal} will follow from
\begin{align}
    \sum_{k\in \Z}(R_k^-)^\theta = O(\delta^{-\frac{1}{2}-\varepsilon}m^{-\frac{1}{2}+\varepsilon} + \delta^{\frac{3}{4}-\varepsilon}m^{\frac{7}{20}+\varepsilon}),\label{sumOverRkMinusGoal}\\
    \sum_{k\in \Z}(R_k^+)^\theta = O(\delta^{-\frac{1}{2}-\varepsilon}m^{-\frac{1}{2}+\varepsilon} + \delta^{\frac{3}{4}-\varepsilon}m^{\frac{7}{20}+\varepsilon}).\label{sumOverRkPlusGoal}
\end{align}

Let us prove \eqref{sumOverRkMinusGoal}. 
Note that if $y\in A_\delta\cap [D,\infty)$, then by \eqref{supBUpperBound} and the definition of $A_\delta,$ 
\[\densityRatio \stepdensity(y-\delta) \ge \sup_{|x|<\delta} \stepdensity(y-x) \ge \densityRatio^2 \stepdensity(y).\]
Hence,
$A_\delta\cap [D,\infty)\subset \big\{x\in \R\,:\,\stepdensity(x-\delta) \ge \densityRatio \stepdensity(x)\big\}.$
This and \eqref{defOfRkMinus} imply
\begin{align}
    R_k^- &= \mathrm{P}_\delta^{0,m}\Big\{S_1\in A_\delta\cap [D,\infty),\,\,S_m\in J_k\Big\}\notag \\
    & \le \mathrm{P}_\delta^{0,m}\Big\{p(S_1-\delta) \ge \densityRatio p(S_1),\,\,S_m\in J_k\Big\},\label{Rk_probBound}
\end{align}
where $(S_1,\dots,S_m)$ is the realization of a random walk with distribution  $\mathrm{P}_\delta^{0,m}.$

 The following two lemmas will allow us to give upper bounds on $R_k^-$ and $R_k^+.$ We postpone the proof of both lemmas until the end of this proof. 
 We define $E_t = \{y\in \R\,:\,\stepdensity(y) \ge \densityRatio \stepdensity(y+t)\}$  for  $t\in \R$ and 
 recall that $\lambda(dx) = \stepdensity(x)dx$ denotes the distribution of one step of the random walk.
 \begin{lemma}\label{holderRegOfSpatialShift}
    We have
    \begin{equation}
        \lambda(E_t) = O(|t|),\quad t\to 0.\label{fractional_t_bound}
    \end{equation}
\end{lemma}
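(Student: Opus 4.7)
My approach is to decompose $E_t$ according to where the shifted point $y+t$ sits relative to the interval $[L,R]$ from \Cref{densityMonotonicityReq}, and then exploit the fact, built into \eqref{defOfDensityRatio}, that $\densityRatio \ge 2$. I will handle the case $0 < t < R-L$; the case $t < 0$ is symmetric, obtained by swapping the roles of the two tails and of the monotonicity directions. Write
\begin{equation*}
E_t = E_t^{\ell} \cup E_t^{m} \cup E_t^{r}, \quad E_t^\ell := E_t \cap (-\infty, L-t],\ E_t^m := E_t \cap (L-t, R-t],\ E_t^r := E_t \cap (R-t, \infty),
\end{equation*}
corresponding respectively to $y+t \in (-\infty, L]$, $y+t \in (L, R]$, and $y+t \in (R, \infty)$.

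First I will dispose of the middle and left pieces essentially for free. On $E_t^m$, the condition $y+t \in (L, R]$ yields $\stepdensity(y+t) \ge \inf_{[L,R]} \stepdensity$, so the defining inequality of $E_t$ combined with \eqref{defOfDensityRatio} forces $\stepdensity(y) \ge \densityRatio \inf_{[L,R]} \stepdensity = 2 \sup \stepdensity$, which is impossible; hence $E_t^m = \emptyset$. On $E_t^\ell$ both $y$ and $y+t$ lie in $(-\infty, L]$, where $\stepdensity$ is nondecreasing, so $\stepdensity(y+t) \ge \stepdensity(y)$; combined with $\stepdensity(y) \ge \densityRatio \stepdensity(y+t) \ge 2\stepdensity(y)$ this forces $\stepdensity(y)=0$ and hence $\lambda(E_t^\ell)=0$.

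The only contribution is therefore from $E_t^r$, which I split once more at $R$. The strip $E_t^r \cap (R-t, R]$ has Lebesgue length at most $t$, so by \Cref{densityBoundedReq} its $\lambda$-measure is at most $t \sup \stepdensity$. On $E_t^r \cap (R, \infty)$ both $y$ and $y+t$ lie in $[R,\infty)$, where $\stepdensity$ is nonincreasing, and the defining condition $\stepdensity(y+t) \le \stepdensity(y)/\densityRatio \le \stepdensity(y)/2$ rearranges to $\stepdensity(y) \le 2(\stepdensity(y) - \stepdensity(y+t))$. Integrating, and using that $\stepdensity(y)-\stepdensity(y+t)\ge 0$ on $[R,\infty)$,
\begin{equation*}
\lambda\bigl(E_t^r \cap (R,\infty)\bigr) \le 2\int_R^\infty \bigl(\stepdensity(y) - \stepdensity(y+t)\bigr)\,dy = 2 \int_R^{R+t} \stepdensity(y)\,dy \le 2 t \sup \stepdensity,
\end{equation*}
so summing the three pieces yields $\lambda(E_t) \le 3 t \sup \stepdensity = O(t)$, as required.

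I do not anticipate any real obstacle: the argument uses only the monotonicity and boundedness encoded in \Cref{densityBoundedReq} and \Cref{densityMonotonicityReq} together with the inequality $\densityRatio \ge 2$ from \eqref{defOfDensityRatio}, and in particular does not require \Cref{momentsReq}. The only care needed is clean bookkeeping of the two boundary strips of length $|t|$ near $L$ and $R$, and reversing the argument's orientation when $t < 0$.
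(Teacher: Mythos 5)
Your proof is correct and, while it shares the first two ingredients with the paper's argument, it handles the tail in a genuinely different and cleaner way. Like the paper, you exploit the identity $\densityRatio\inf_{[L,R]}\stepdensity = 2\sup\stepdensity$ to kill the ``central'' region and use the monotonicity on $(-\infty,L]$ to kill the left tail. (Your decomposition is indexed by where $y+t$ lands rather than where $y$ lands, but this is cosmetic — both produce two empty or negligible pieces plus a short boundary strip of length $t$ near $R$.) The substantive difference is the right tail $E_t\cap(R,\infty)$: the paper discretizes $[R,\infty)$ into blocks $[x_n, x_{n+1}]$ of length $t$, shows that each block contributing to $E_t$ forces a geometric drop $\stepdensity(x_n)\ge\densityRatio\stepdensity(x_{n+2})$, and sums a geometric series to get the bound $\frac{2\densityRatio}{\densityRatio-1}t\stepdensity(R)$. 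You instead use the pointwise rearrangement $\stepdensity(y)\le 2\bigl(\stepdensity(y)-\stepdensity(y+t)\bigr)$ (valid on $E_t$ since $\densityRatio\ge 2$) together with monotonicity (so the integrand is nonnegative on all of $[R,\infty)$) and a telescoping integral, $\int_R^\infty\bigl(\stepdensity(y)-\stepdensity(y+t)\bigr)\,dy = \int_R^{R+t}\stepdensity$, to get the bound $2t\sup\stepdensity$ in one step. Your approach avoids the block bookkeeping and the even/odd split entirely, yields a simpler constant, and generalizes more transparently to any $\densityRatio > 1$ (replacing the factor $2$ by $\densityRatio/(\densityRatio-1)$), whereas the paper's argument is tied to the discrete scale $t$. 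Both use only \Cref{densityBoundedReq} and \Cref{densityMonotonicityReq} and neither needs \Cref{momentsReq}, as you note.
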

\begin{lemma}\label{mStepDensityBound}
    There is a constant $C$ such that for all $z\in\R$, $\delta \in(0,1)$, and $n\in \N$
    \begin{equation}
        \mathrm{P}^n\{S_n\in [z,z+\delta)\} \le \frac{C \delta}{\sqrt{n}(1+z^2n^{-1})}.
    \end{equation}
\end{lemma}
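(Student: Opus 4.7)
The plan is to reduce the lemma to the pointwise density bound
\begin{equation}
q_n(y)\le \frac{C}{\sqrt n\,(1+y^2/n)},\qquad y\in\R,\ n\in\N, \label{eq:planGoalDensity}
\end{equation}
where $q_n$ denotes the density of $S_n$ under $\mathrm{P}^n$. Since $\delta<1$, the factor $1+y^2/n$ stays within a universal constant of $1+z^2/n$ on $y\in[z,z+\delta)$ by an elementary case split separating $|z|\le 1$ from $|z|>1$, so integrating \eqref{eq:planGoalDensity} over $[z,z+\delta)$ gives the lemma. To prove \eqref{eq:planGoalDensity} I would run a Fourier-analytic local-CLT argument on the characteristic function $\phi(\xi):=\int_\R e^{i\xi x}\stepdensity(x)dx$. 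The hypothesis $\momentnumber>2$ in \Cref{momentsReq} gives $\phi\in C^2$ with $\phi(0)=1,\ \phi'(0)=0,\ \phi''(0)=-\sigma^2$ (since $\stepdensity$ is centered); \Cref{densityBoundedReq} combined with $\stepdensity\in L^1$ yields $\stepdensity\in L^2$, hence $\phi\in L^2$ by Plancherel; absolute continuity of $\lambda$ forces $|\phi(\xi)|<1$ for $\xi\ne 0$, and continuity plus the Riemann--Lebesgue decay $|\phi(\xi)|\to 0$ as $|\xi|\to\infty$ give $\sup_{|\xi|\ge\eta}|\phi(\xi)|<1$ for every $\eta>0$.

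Fourier inversion $q_n(y)=(2\pi)^{-1}\int e^{-i\xi y}\phi(\xi)^n d\xi$ is valid for $n\ge 2$. For the global bound $\|q_n\|_\infty\le C/\sqrt n$ I split at $|\xi|=\eta$: near $0$, $|\phi(\xi)|\le 1-c\xi^2$ yields $\int_{|\xi|\le\eta}|\phi|^n d\xi=O(n^{-1/2})$, while $|\phi|^{n-2}\le\kappa^{n-2}$ on $\{|\xi|>\eta\}$ for some $\kappa<1$, combined with $\phi\in L^2$, produces an exponentially small tail. For the polynomial decay I integrate by parts twice in $\xi$, obtaining
\[y^2 q_n(y)=-\frac{1}{2\pi}\int_\R e^{-i\xi y}\left[n(n-1)\phi^{n-2}(\phi')^2+n\phi^{n-1}\phi''\right]d\xi;\]
the boundary terms vanish since $\phi^n,(\phi^n)'\to 0$ at infinity for $n$ large (using $\phi\in L^2$ and $\phi'\in L^2$, the latter because $x\stepdensity\in L^2$ follows from $\stepdensity\in L^\infty$ and $\int x^2\stepdensity<\infty$). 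Near $\xi=0$, the estimates $|\phi'(\xi)|=O(|\xi|)$, $\|\phi''\|_\infty<\infty$, and $|\phi|^{n-2}\le e^{-cn\xi^2}$ produce local contributions of order $n^2\cdot n^{-3/2}=\sqrt n$ and $n\cdot n^{-1/2}=\sqrt n$ respectively; the tails $|\xi|>\eta$ contribute $O(\kappa^n)$. Hence $|y^2 q_n(y)|\le C\sqrt n$. Combining via $\min(C/\sqrt n,C\sqrt n/y^2)\le 2C/(\sqrt n\,(1+y^2/n))$ establishes \eqref{eq:planGoalDensity} for all $n$ above some threshold $n_0$.

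For the finitely many $n<n_0$ the bound \eqref{eq:planGoalDensity} is derived directly from \Cref{densityMonotonicityReq} together with the second-moment hypothesis: on $[R,\infty)$ monotonicity gives
\[\sigma^2 \ge \int_{y/2}^y x^2\stepdensity(x)dx\ge \frac{y^3}{8}\,\stepdensity(y),\]
so $\stepdensity(y)=O(|y|^{-3})$ as $|y|\to\infty$, which with \Cref{densityBoundedReq} yields \eqref{eq:planGoalDensity} for $n=1$; a bounded number of convolutions preserves the $y^{-2}$ envelope via Young's inequality and straightforward tail estimates. The main obstacle I anticipate is the justification of the integration-by-parts step — verifying decay of $\phi^n,(\phi^n)'$ at infinity and global integrability of $(\phi^n)''$ under only $\momentnumber>2$ moments — which comes down to carefully exploiting $\phi,\phi'\in L^2$ together with $\phi,\phi',\phi''\in L^\infty\cap C_0(\R)$ following from finite second moment and $\stepdensity\in L^1\cap L^\infty$.
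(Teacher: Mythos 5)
Your proof is correct and takes a genuinely different route. The paper dispatches the lemma by citing the non-uniform local limit theorem of Shevtsova \cite{Shevtsova2017}: its Corollaries~2 and~5 yield
\[
\limsup_{n\to\infty} n^{\min(\momentnumber-2,1)/2}\sup_{x\in\R}(1+|x|^2)\bigl|\sqrt{n}\,\stepdensity_n(x\sqrt n)-g(x\sigma^{-1})\bigr|<\infty,
\]
from which $\stepdensity_n(y)\le C n^{-1/2}(1+y^2/n)^{-1}$ is read off for large $n$ and then integrated over $[z,z+\delta)$. You instead re-derive this pointwise density bound by Fourier inversion, pairing the $\|q_n\|_\infty=O(n^{-1/2})$ estimate (Gaussian control $|\phi(\xi)|\le 1-c\xi^2$ near $\xi=0$, exponential decay of $|\phi|^n$ on $|\xi|\ge\eta$) with the $y^2q_n(y)=O(n^{1/2})$ estimate obtained from a double integration by parts. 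Your route is longer but elementary, self-contained, and actually requires less than \Cref{momentsReq}: only the finite second moment enters. It also explicitly supplies the bound for the finitely many small $n$ (via the $|y|^{-3}$ tail for $\stepdensity$ from \Cref{densityMonotonicityReq} together with stability of the $y^{-2}$ envelope under a bounded number of convolutions), whereas the paper's asymptotic citation covers only large $n$ and leaves the small-$n$ case implicit. The integration-by-parts step you single out as delicate does go through, for $n$ at or above a small threshold such as $4$: $\phi'\in L^\infty\cap L^2$ since $\int|x|\stepdensity\,dx<\infty$ and $x\stepdensity\in L^2$, $\phi''\in L^\infty$ from the second moment, $\phi^n,(\phi^n)'\to 0$ at infinity by Riemann--Lebesgue so boundary terms vanish, and the resulting integrands lie in $L^1$ because $|\phi|^{n-2}\le|\phi|^2\in L^1$.
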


Note that for $t\in\R,$  
\begin{align}
    \mathrm{P}_t^{0,m}\big\{\stepdensity (S_1-t) \ge \densityRatio \stepdensity(S_1))\big\}& = \int_{\{x\in \R\,:\,\stepdensity(x-t)\ge \densityRatio \stepdensity(x)\}} \stepdensity(x-t)dx\notag\\
    & = \int_{\{y\in \R\,:\,\stepdensity(y)\ge \densityRatio \stepdensity(y+t)\}} \stepdensity(y)dy\notag\\
    & = \lambda(E_t). \label{oneStepSpatialShift}
\end{align}
Equality \eqref{oneStepSpatialShift} and \Cref{holderRegOfSpatialShift} imply that there is $C>0$ such that for all $\delta>0$ sufficiently small,
\begin{equation}
    \mathrm{P}_\delta^{0,m}\big\{\stepdensity(S_1-\delta) \ge \densityRatio \stepdensity(S_1)\big\} \le C \delta.\label{oneStepDelta_s}
\end{equation}

We next derive two bounds on the right-hand side of \eqref{Rk_probBound} useful for small and large $k$, respectively. Inequality~\eqref{Rk_probBound}, inequality~\eqref{oneStepDelta_s} and the Markov property of random walks imply
\begin{align}
    R_k^- & = \mathrm{P}_\delta^{0,m}\big\{\stepdensity(S_1-\delta) \ge \densityRatio \stepdensity(S_1)\big\}\cdot  \mathrm{P}_\delta^{0,m}\big\{S_m\in J_k\,|\, \stepdensity(S_1-\delta) \ge \densityRatio \stepdensity(S_1)\big\}\notag\\
     &\le \mathrm{P}_\delta^{0,m}\big\{\stepdensity(S_1-\delta) \ge \densityRatio \stepdensity(S_1)\big\}\cdot \sup_{x\in \R} \mathrm{P}_x^{1,m}\big\{S_m\in J_k\big\}.\notag\\
     & \le C \delta \sup_{x\in \R} \mathrm{P}_x^{1,m}\big\{S_m\in J_k\big\}.\label{supOverxProbBound}
\end{align}
\Cref{mStepDensityBound} implies that for $m\ge 2$
\begin{align*}
    \sup_{x\in \R} \mathrm{P}_x^{1,m}\big\{S_m\in J_k\big\} &= \sup_{x\in \R} \mathrm{P}_x^{m-1}\big\{S_{m-1}\in J_k\big\} \\
    &\le \sup_{x\in \R}\frac{C \delta}{\sqrt{m-1}(1+(k\delta-x)^2(m-1)^{-1})} \\
    &\le C' \delta m^{-1/2}.
\end{align*}
Combining this with \eqref{supOverxProbBound} we obtain
\begin{align}
    R_k^-\le C \delta^{2} m^{-1/2}\label{Rk_supBound}
\end{align}
for some constant $C$. This upper bound is mostly useful for small $k$, since it does not take into account the tail decay of $S_m$. Let us now derive an upper bound useful 
for large $k$. H\"{o}lder's inequality and \eqref{oneStepDelta_s} give us the bound
\begin{align}
    R_k^- &\le \mathrm{P}_\delta^{0,m}\big\{p(S_1-\delta) \ge \densityRatio p(S_1)\big\}^{1/5}\cdot \mathrm{P}_\delta^{0,m}\big\{S_m\in J_k\big\}^{4/5}\notag\\
    & \le C \delta^{1/5} \mathrm{P}_\delta^{0,m}\big\{S_m\in J_k\big\}^{4/5}.\label{CS_Bound}
\end{align}
Let $\alpha > 0$ be a number to be specified later. Applying \eqref{Rk_supBound} and \eqref{CS_Bound} to $|k|\le\delta^{-\alpha}$ and $|k| > \delta^{-\alpha},$ respectively, we obtain
\begin{align}
    \sum_{k\in \Z}(R_k^-)^\theta & = \sum_{|k|\le \delta^{-\alpha}}(R_k^-)^\theta + \sum_{|k|> \delta^{-\alpha}}(R_k^-)^\theta\notag\\
    & \le \sum_{|k|\le \delta^{-\alpha}} C \delta^{2\theta} m^{-\theta/2} + \sum_{|k|> \delta^{-\alpha}}C \delta^{\theta/5} \mathrm{P}_\delta^{0,m}\big\{S_m\in J_k\big\}^{4\theta/5}\notag\\
    & \le C \delta^{2\theta-\alpha} m^{-\theta/2} + C \delta^{\theta/5}\sum_{|k|> \delta^{-\alpha}}\mathrm{P}_\delta^{0,m}\big\{S_m\in J_k\big\}^{4\theta/5}.\label{sumOverRk}
\end{align}
\Cref{mStepDensityBound} implies that for all $k\in \Z\setminus\{0\}$ and $\delta>0,$
\[\mathrm{P}_\delta^{0,m}\big\{S_m\in J_k\big\}^{4\theta/5} \le \frac{C \delta^{4\theta/5} m^{-2\theta/5}}{(1 + (k\delta-\delta)^2m^{-1})^{4\theta/5}} \le \frac{C' \delta^{4\theta/5} m^{-2\theta/5}}{(1 + k^2\delta^2 m^{-1})^{4\theta/5}}\]
for some constant $C'$ independent of $\delta,m,$ and $k$. This implies
\begin{align}
    \sum_{|k|> \delta^{-\alpha}}\mathrm{P}_\delta^{0,m}\big\{S_m\in J_k\big\}^{4\theta/5} & \le C' \delta^{4\theta/5} m^{-2\theta/5} \sum_{|k|> \delta^{-\alpha}} \frac{1}{(1 + k^2\delta^2 m^{-1})^{4\theta/5}}\notag\\
    & \le 2  C' \delta^{4\theta/5} m^{-2\theta/5} \int_{\delta^{-\alpha}-1}^\infty \frac{1}{(1 + y^2\delta^2 m^{-1})^{4\theta/5}}dy\notag\\
    & = 2  C'\delta^{4\theta/5-1} m^{1/2-2\theta/5}  \int_{m^{-1/2}\delta(\delta^{-\alpha}-1)}^\infty \frac{1}{(1 + z^2)^{4\theta/5}}dz. \label{sumOverLargekIntegralBound}
\end{align}
In the above we use the symmetry and monotonicity of $(1+y^2)^{-1}$, and the change of variables $z = y \delta m^{-1/2}$. We can then use Markov's inequality for $r > 0$ and $1/2>\delta>0$ to obtain 
\begin{align}
    \int_{m^{-1/2}\delta(\delta^{-\alpha}-1)}^\infty \frac{1}{(1 + z^2)^{4\theta/5}}dz &\le \frac{1}{(m^{-1/2}\delta (\delta^{-\alpha}-1))^r}\int_{-\infty}^\infty \frac{|z|^r}{(1+z^2)^{4\theta/5}}dz\notag\\
    & \le \frac{C}{m^{-r/2}\delta^{r-r\alpha}}\int_{-\infty}^\infty \frac{|z|^r}{(1+z^2)^{4\theta/5}}dz.\label{MarkovInequalityUpperBound_r}
\end{align}
As long as $r < \frac{8\theta}{5} - 1$, the right-hand side of \eqref{MarkovInequalityUpperBound_r} is finite. Inequalities~\eqref{sumOverLargekIntegralBound} and ~\eqref{MarkovInequalityUpperBound_r} imply that for such $r$, there is a positive constant $C$ such that
\begin{align}
    \sum_{|k|> \delta^{-\alpha}}\mathrm{P}_\delta^{0,m}\big\{S_m\in J_k\big\}^{4\theta/5} \le  C\delta^{4\theta/5-1-r+r\alpha} m^{1/2-2\theta/5+r/2}\label{RkBound_largek}
\end{align}
Displays \eqref{sumOverRk} and \eqref{RkBound_largek} imply that for any $(\theta,r,\alpha)$ satisfying $0<\theta<1,$ $0 < r < \frac{8\theta}{5} - 1$, and $\alpha > 0$, there is a finite constant $C$ such that 
\begin{equation}
    \sum_{k\in \Z}(R_k^-)^\theta  \le C\delta^{2\theta-\alpha} m^{-\theta/2} + C\delta^{\theta-1-r+r\alpha} m^{1/2-2\theta/5+r/2}.\label{sumOverRk_2}
\end{equation}
Set 
\begin{equation}
    \alpha = \frac{5}{2},\qquad r = \frac{1}{2}.\label{alpha_r_def}
\end{equation}
Since $r=\frac{1}{2} < \frac{3}{5}=\frac{8}{5}-1$, we can find $\theta_1\in (0,1)$ such that if $\theta\in (\theta_1,1)$, then 
\[r=\frac{1}{2}  < \frac{8 \theta}{5}-1.\]
Using \eqref{alpha_r_def}  in \eqref{sumOverRk_2}, we obtain 
\begin{equation}
    \sum_{k\in \Z}(R_k^-)^\theta  \le C \delta^{2\theta - \frac{5}{2}}m^{-\theta/2} + C \delta^{\theta - \frac{1}{4}}m^{\frac{3}{4}-\frac{2\theta}{5}}.
\end{equation}
Therefore, for any $\varepsilon>0$,  we can find $\theta_2\in [\theta_1,1)$ such that if $\theta\in (\theta_2,1)$ 
then
~\eqref{sumOverRkMinusGoal} holds. The proof of \eqref{sumOverRkPlusGoal} is similar, and  the lemma follows.
\end{proof}

\begin{proof}[Proof of \Cref{holderRegOfSpatialShift}]
We consider only $t>0$ (the proof for $t<0$ is similar). Suppose that $0<t < R-L$. If $x,x+t \in [L,R]$ then 
$\stepdensity (x) < \densityRatio \stepdensity (x+t)$
by the definition of $\densityRatio$ in \eqref{defOfDensityRatio} and so $x\notin E_t.$ It follows that 
\begin{equation}
    \lambda\big( E_t\cap [L,R]\big) \le \lambda([R-t,R]) \le t \sup_{x\in \R} \stepdensity(x)\label{Et_LR}.
\end{equation}

Now we consider $E_t\cap (-\infty,L].$ If $x\in  (-\infty,L-t]$ and $\stepdensity(x) > 0,$ then $x\notin E_t$ because $\stepdensity$ is nondecreasing on $(-\infty,L].$ Also, if $x\in (L-t,L]$, then $x+t\in [L,R]$ and so $\stepdensity(x+t) \ge \inf_{z\in [L,R]}\stepdensity(z).$ This implies $\stepdensity(x) < \densityRatio \stepdensity(x+t).$ It follows that 
\begin{align}
    \lambda(E_t\cap (-\infty,L]) = 0.\label{Et_InftyL}
\end{align}

Now we consider the set $E_t\cap [R,\infty)$, where $\stepdensity$ is nonincreasing. Define $x_n = R + nt$ for $n\in \N.$ If $\stepdensity(x) \ge \densityRatio \stepdensity(x+t)$ for some $x\in [x_n,x_{n+1}]$, then $x+t\in [x_{n+1},x_{n+2}]$ and so $\stepdensity(x_n)\ge \densityRatio \stepdensity(x_{n+2})$. It follows that 
\begin{align}
    \lambda\left(E_t\cap [R,\infty)\right) & = \int_{[R,\infty)} \1_{\{\stepdensity (x)\ge \densityRatio\stepdensity (x+t)\} }\stepdensity (x)dx\notag\\
    & = \sum_{n=0}^{\infty} \int_{x_n}^{x_{n+1}}\1_{\{\stepdensity (x)\ge \densityRatio\stepdensity (x+t)\} }\stepdensity (x)dx\notag\\
    & \le t \sum_{n=0}^{\infty} \1_{\{\stepdensity (x_n) \ge \densityRatio\stepdensity (x_{n+2})\}}\stepdensity (x_n).\label{EtUpperTail}
\end{align}
Let 
\begin{align*}
C_n = |\{2\le k\le n\,:\,k\text{ even and } \stepdensity (x_{k-2}) \ge \densityRatio \stepdensity (x_{k})\}|=\sum_{\substack{k\in 2\N \\  2\le k\le n}} \1_{\{ \stepdensity (x_{k-2}) \ge \densityRatio \stepdensity (x_{k})\}},\\
B_n = |\{3\le k\le n\,:\,k\text{ odd and } \stepdensity (x_{k-2}) \ge \densityRatio \stepdensity (x_{k})\}|=
\sum_{\substack{k\in 2\N+1 \\  3\le k\le n}} \1_{\{ \stepdensity (x_{k-2}) \ge \densityRatio \stepdensity (x_{k})\}}.
\end{align*}
Due to the monotonicity of $p$,
\begin{align*}
\stepdensity (x_n) &\le 
\densityRatio^{-C_n} \stepdensity (x_0) = \densityRatio^{-C_n} \stepdensity (R), \quad n\in 2\N, \\
\stepdensity (x_n) & \le \densityRatio^{-B_n} \stepdensity (x_1) \le \densityRatio^{-B_n} \stepdensity (R), \quad n\in 2\N+1.
\end{align*}
Thus, using \eqref{EtUpperTail} we obtain
\begin{align}
    \lambda\left(E_t\cap [R,\infty)\right) &\le t \stepdensity (R) \left[\sum_{n \text{ even }} \1_{\{\stepdensity (x_n) \ge \densityRatio \stepdensity (x_{n+2})\}}\densityRatio^{-C_n} + \sum_{n \text{ odd }} \1_{\{\stepdensity (x_n) \ge \densityRatio \stepdensity (x_{n+2})\}}\densityRatio^{-B_n}\right]\notag\\
    & \le 2 t \stepdensity (R) \sum_{m=0}^\infty \densityRatio^{- m} \notag\\
     & \le \frac{2 \densityRatio}{\densityRatio-1}t\stepdensity(R).\label{Et_RInfty}
\end{align}
Finally, \eqref{Et_LR}, \eqref{Et_InftyL}, and \eqref{Et_RInfty} imply that
\begin{align}
    \lambda (E_t) & \le t \sup_{x\in \R} \stepdensity(x) + 0 + \frac{2 \densityRatio}{\densityRatio-1}t \stepdensity(R)\notag 
    = O(t),
\end{align}
and the proof is completed.
\end{proof}

\begin{proof}[Proof of \Cref{mStepDensityBound}]
    Let $\stepdensity_n$ be the density of the centered random variable $S_n = \sum_{i = 1}^n X_i$ where $X_i\sim \stepdensity$ are i.i.d.. We use the non-uniform local limit 
    theorems from~\cite{Shevtsova2017}. Corollary 2 and Corollary 5 of \cite{Shevtsova2017} imply that
    \begin{equation}
        \limsup_{n\to \infty}n^{\min(\momentnumber-2,1)/2}\sup_{x\in \R}(1 + |x|^2) |\sqrt{n}\stepdensity_n(x\sqrt{n}) - g(x\sigma^{-1})| < c < \infty,\label{ShevtsovaResult}
    \end{equation}
    where $g$ is the standard normal and $\sigma^2 = \mathrm{E}[|X_1|^2]$, for some constant $c$ depending on $\momentnumber,$ $\mathrm{E}[|X_1|^2],$ and $\mathrm{E}[|X_1|^\momentnumber]$. Display \eqref{ShevtsovaResult} implies in particular that there is a constant $C$ such that for sufficiently large $n$,     
\begin{equation}
        \stepdensity_n(y)  \le \frac{C}{\sqrt{n}(1 + y^2n^{-1})}\label{density_n_upperBound}
    \end{equation}
    for all $y\in \R.$ Therefore,
    \begin{align*}
        \mathrm{P}^n\{S_n\in [z,z+\delta)\} &= \int_z^{z+\delta} \stepdensity_n(y)dy\\
        & \le \int_z^{z+\delta} \frac{C}{\sqrt{n}(1+y^2 n^{-1})} dy\\
        & \le \delta \sup_{y\in [z,z+\delta)} \frac{C}{\sqrt{n}(1+y^2 n^{-1})}\\
        & \le \frac{2C\delta }{\sqrt{n}(1+z^2n^{-1})}
    \end{align*}
    and the lemma is proved.
\end{proof}

\begin{proof}[Proof of \Cref{boundForW0}]
    Let us fix $\delta \in (0,1)$ and $R > 2$.
    By Jensen's inequality,
\begin{align}
    \sum_{J\in \mathcal{J}^\delta}\left(W_{0,J}^m\right)^\theta & = \sum_{|k|\le R}\left(W_{0,J_k}^m\right)^\theta +  \sum_{|k|>R}\left(W_{0,J_k}^m\right)^\theta\notag\\
    & \le (2 R + 1) \left(\sum_{|k|\le R}W_{0,J_k}^m\right)^\theta + \sum_{|k|>R}\left(W_{0,J_k}^m\right)^\theta\notag\\
    & \le 3 R (W^m)^\theta + \sum_{|k|>R}\left(W_{0,J_k}^m\right)^\theta.\label{W0_sumBound}
\end{align}
Assuming $R>2,$ 
\begin{align}
    \sum_{|k|>R}\E[(W_{0,J_k}^m)^\theta] &\le \sum_{|k|>R}\left(\E[W_{0,J_k}^m]\right)^\theta\notag\\
    & \stackrel{\text{\Cref{thm:expectedNormalizedPartitionFcn}}}{=} \sum_{|k|>R}\left(\mathrm{P}^{m}\big\{S_m\in J_k\big\}\right)^\theta\notag\\
    & \stackrel{\text{\Cref{mStepDensityBound}}}{\le} \sum_{|k|>R}^\infty\frac{C' \delta^{\theta} m^{-\theta/2}}{(1 + k^2\delta^2 m^{-1})^{\theta}}\notag\\
    & \le 2C'\delta^\theta m^{-\theta/2} \int_{R-1}^{\infty} \frac{1}{(1 + y^2\delta^2 m^{-1})^\theta}dy\notag\\
    & = 2C'\delta^{\theta-1} m^{1/2-\theta/2}\int_{m^{-1/2}\delta(R-1)}^{\infty} \frac{1}{(1 + z^2)^\theta}dz.\label{W0_IntegralUpperBound}
\end{align}
In the last line of \eqref{W0_IntegralUpperBound}, we use the substitution $z = y\delta m^{-1/2}.$
Next, letting $r>0$ to be chosen later, we can use Markov's inequality to get the bound 
\begin{align}
    \int_{m^{-1/2}\delta(R-1)}^{\infty} \frac{1}{(1 + z^2)^\theta}dz &\le \frac{1}{m^{-r/2}\delta^r(R-1)^r} \int_{-\infty}^\infty \frac{|z|^r}{(1 + z^2)^\theta}dz\notag \\
    & \le C m^{r/2}\delta^{-r} R^{-r}.\label{W0_MarkovUpperBound}
\end{align}
In order for the integral $\int_{-\infty}^\infty \frac{|z|^r}{(1 + z^2)^\theta}dz$ in \eqref{W0_MarkovUpperBound} to be finite we need $r < 2 \theta - 1$. Combining \eqref{W0_sumBound}, \eqref{W0_IntegralUpperBound}, and \eqref{W0_MarkovUpperBound} we obtain 
\begin{equation}
    \sum_{J_k\in \mathcal{J}^\delta}\E[(W_{0,J_k}^m)^\theta] \le 3 R \E[(W^m)^\theta] + C \delta^{\theta-1-r} m^{\frac{1}{2}+\frac{r}{2}-\frac{\theta}{2}}R^{-r}.\label{W0_RflexibleBound}
\end{equation}
Set 
\[R = \left(\frac{\delta^{\theta-1-r} m^{\frac{1}{2}+\frac{r}{2}-\frac{\theta}{2}}}{\E[(W^m)^\theta]}\right)^{\frac{1}{1+r}}.\]
 We have $\E[(W^m)^\theta] \le 1$ by Jensen's inequality, and $\delta < 1$ by assumption, and so $R>2$ for large enough $m$. This choice of $R$ and \eqref{W0_RflexibleBound} imply
\begin{align}
    \sum_{J_k\in \mathcal{J}^\delta}\E[(W_{0,J_k}^m)^\theta] \le C \left(\delta^{\theta-1-r} m^{\frac{1}{2}+\frac{r}{2}-\frac{\theta}{2}}\left(\E[(W^m)^\theta]\right)^{r}\right)^{\frac{1}{1+r}}.\label{theta_r_upperBound}
\end{align}
Let $\varepsilon \in (0,1)$. Set $r = 1- \varepsilon$ and $\theta_0 = 1 - \frac{1}{2}\varepsilon$. Now if $\theta\in(\theta_0,1)$, then 
\[1-\varepsilon = r < 2\theta-1\] 
and so the integral in \eqref{W0_MarkovUpperBound} is finite. Finally, with this choice of $r$ and $\theta_0$, \eqref{theta_r_upperBound} implies that for all $\theta\in(\theta_0,1)$,
\eqref{eq:boundForW0} holds, which completes the proof.
\end{proof}

\subsection{Decay of Fractional Moments}\label{decayOfFractionalMoments}

The main purpose of this section is to prove \Cref{lem:fractionalMomentDecay}. Our study  
of the overlap metric
on probability distributions in
Section~\ref{overlapMetricAnalysis}
is of independent interest.

Our proof of \Cref{lem:fractionalMomentDecay} is based on the following lemma, proved in Section~\ref{recursiveBoundSection}.
\begin{lemma}\label{recursiveBound}
    For any $\theta\in(0,1)$, there are positive constants $C_1,C_2$ such that 
    \[\E[(W^{n+1})^\theta] \le \left(1 - \frac{C_1}{t}\right)\E[(W^{n})^\theta] + \frac{C_2}{t}\left(\mathrm{P}^n\big\{|S_n|\ge t\big\}\right)^\theta,
     \quad t\ge 1, \  n\in \N.\]
\end{lemma}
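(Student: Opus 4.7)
The plan is to obtain the recursion by conditioning on one step and splitting according to whether $\rho_0^n$ puts most of its mass in $[-t,t]$. We first write
\[
W^{n+1}=W^n\cdot e^{-\logExp}\int_{\R}e^{-F_n(y)}\rho_0^n(dy).
\]
Since $\rho_0^n$ is measurable with respect to $\mathcal{F}_{n-1}:=\sigma(F_0,\dots,F_{n-1})$ and $F_n$ is independent of $\mathcal{F}_{n-1}$ and equal in distribution to $F_0$, conditioning the $\theta$-th power on $\mathcal{F}_{n-1}$ gives
\[
\E\bigl[(W^{n+1})^\theta\bigm|\mathcal{F}_{n-1}\bigr]=(W^n)^\theta\,\Psi(\rho_0^n),\qquad \Psi(\nu):=\E\biggl[\Bigl(\int_{\R}e^{-F_0(y)-\logExp}\nu(dy)\Bigr)^\theta\biggr].
\]
The Jensen bound $\Psi(\nu)\le 1$ only recovers the supermartingale property of $((W^n)^\theta)_n$, so the heart of the argument is a quantitative refinement of Jensen.

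The central step is to prove the functional inequality $\Psi(\nu)\le 1-c\,I(\delta_0,\nu)$ for some fixed $\delta_0,c>0$. Writing $\eta_y:=e^{-F_0(y)-\logExp}$ and $X:=\int(\eta_y-1)\nu(dy)$, so that $\E X=0$, the pointwise concavity inequality $(1+x)^\theta\le 1+\theta x-c_\theta(x^2\wedge 1)$ for $x\ge -1$ reduces this to a uniform lower bound on $\E[X^2\wedge 1]$. The positive correlation hypothesis of \Cref{continuousEnvironmentAssumptions} combined with continuity of $F_0$ yields $\E[(\eta_x-1)(\eta_y-1)]\ge\kappa>0$ for $|x-y|<\delta_0$, so
\[
\E[X^2]=\iint\E[(\eta_x-1)(\eta_y-1)]\nu(dx)\nu(dy)\ge\kappa\,I(\delta_0,\nu).
\]
To pass from $\E[X^2\wedge 1]$ to $\E[X^2]$ uniformly in $\nu$ I intend to use a Paley--Zygmund estimate on $X^2$ together with a cumulant-type expansion exploiting the $M$-dependence of $F_0$ to control $\E[X^4]$ in terms of higher overlaps, ensuring the truncation costs only a constant factor.

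With the functional inequality in hand, I split on $A_t:=\{\rho_0^n([-t,t])\ge 1/2\}$. A pigeonhole argument---partitioning $[-t,t]$ into $\lceil 2t/\delta_0\rceil$ disjoint intervals of length $\delta_0$ and applying Cauchy--Schwarz to the masses---gives $I(\delta_0,\rho_0^n)\ge c'/t$ on $A_t$, hence $\Psi(\rho_0^n)\le 1-C_1/t$ there. Using $\Psi\le 1$ trivially on $A_t^c$,
\[
\E[(W^{n+1})^\theta]\le\bigl(1-\tfrac{C_1}{t}\bigr)\E[(W^n)^\theta]+\tfrac{C_1}{t}\,\E\bigl[(W^n)^\theta\1_{A_t^c}\bigr].
\]
On $A_t^c$ the definition forces $Z_0^n<2\int_{|y|\ge t}Z_{0,y}^n\gamma(dy)$, so $(W^n)^\theta\1_{A_t^c}\le 2^\theta(\int_{|y|\ge t}W_{0,y}^n\gamma(dy))^\theta$. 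Jensen's inequality together with the identity $\E\int_{|y|\ge t}W_{0,y}^n\gamma(dy)=\mathrm{P}^n\{|S_n|\ge t\}$ (a Fubini calculation using $\E e^{-F_k(y)}=e^{\logExp}$) yields $\E[(W^n)^\theta\1_{A_t^c}]\le 2^\theta(\mathrm{P}^n\{|S_n|\ge t\})^\theta$, producing the stated bound with $C_2=2^\theta C_1$.

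The main obstacle I anticipate is establishing the linear (rather than higher-power) dependence $\Psi(\nu)\le 1-c\,I(\delta_0,\nu)$. The concavity estimate and variance bound easily yield $\Psi(\nu)\le 1-c(\E[X^2])^p$ for some $p\ge 1$, but matching the $1/t$ target requires $p=1$, which in turn requires a careful Paley--Zygmund argument sensitive to the precise scaling of $\E[X^4]$ with $I(\delta_0,\nu)$ via the $M$-dependence of $F_0$ guaranteed by \Cref{continuousEnvironmentAssumptions}.
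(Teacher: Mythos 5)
Your overall architecture (condition on one step, write $W^{n+1}/W^n=1+U$ with $U=\int(\eta_y-1)\rho^n(dy)$ and $\E[U\mid\mathcal{F}_{n-1}]=0$, apply a quantitative Jensen bound, then handle the tail) mirrors the paper's, and your finishing move via the split on $A_t=\{\rho^n([-t,t])\ge 1/2\}$ together with the pigeonhole/Cauchy--Schwarz lower bound $I(\delta_0,\rho^n)\gtrsim\delta_0/t$ on $A_t$ is a perfectly valid alternative to the paper's comparison lemma (Lemma~\ref{thm:expofWILowerBound}), and the estimate $\E[(W^n)^\theta\1_{A_t^c}]\le 2^\theta(\mathrm{P}^n\{|S_n|\ge t\})^\theta$ is correct. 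However, the central step --- the functional inequality $\Psi(\nu)\le 1-c\,I(\delta_0,\nu)$ --- is left genuinely unresolved, and your proposed route to it does not quite close.

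The difficulty you flag is real, and straight Paley--Zygmund cannot fix it. Applied to $Y=X^2$, Paley--Zygmund yields $\Prb(X^2>sv)\ge(1-s)^2 v^2/\E[X^4]$ where $v=\E[X^2]$, and since the best one can hope for is $\E[X^4]\lesssim v$ (the $M$-dependence gives $\E[X^4]\lesssim v+v^2$), this gives $\E[X^2\wedge 1]\ge sv\cdot\Prb(X^2>sv)\gtrsim v^2$, i.e.\ $p=2$, not $p=1$. A linear bound \emph{is} available, but it requires a different mechanism: the truncation/Markov argument $\E[X^2\1_{X^2>T}]\le T^{-1}\E[X^4]$, choosing $T\asymp\E[X^4]/v$, together with $X^2\wedge 1\ge(X^2/T)\1_{X^2\le T}$ for $T\ge 1$, which yields $\E[X^2\wedge 1]\ge v^2/(4\E[X^4])\gtrsim v$ once $\E[X^4]\lesssim v$ is in hand. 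You would still need to \emph{prove} the fourth-moment bound $\E[X^4]\lesssim I(\delta_0,\nu)$, which demands a four-point cumulant analysis using the $M$-dependence and a fourth exponential-moment control on $F_0$ --- none of which is carried out, and which is strictly more than what the hypotheses were designed to deliver.

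The paper sidesteps all of this by replacing $x^2\wedge 1$ with $\frac{u^2}{2+u}$: the pointwise inequality $(1+u)^\theta-\theta u-1\le -c\,\frac{u^2}{2+u}$ for $u\ge -1$ (display (4.5) of \cite{CSY03}) plays the role of your concavity bound, and then Lemma~\ref{thm:U2bound} delivers $\QQ\!\left[\frac{U^2}{2+U}\right]\ge c_3\,I(r,\rho^n)$ directly via the Cauchy--Schwarz factorization
\[
\QQ[U^2]=\QQ\!\left[\frac{U}{\sqrt{2+U}}\cdot U\sqrt{2+U}\right]\le\left(\QQ\!\left[\frac{U^2}{2+U}\right]\right)^{1/2}\left(\QQ[2U^2+U^3]\right)^{1/2},
\]
which only requires the \emph{signed} third-moment bound $\QQ[U^3]\lesssim I(r,\rho^n)$; that bound follows cheaply from the compact support of the three-point function $W(x,y)=\E[\eta(0)\eta(x)\eta(y)]$, which is a direct consequence of the $M$-dependence and $\E[\eta]=0$. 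This is exactly the linear (rather than quadratic) dependence you need, obtained with weaker moment input. So: your plan is salvageable, but as written it has a gap, and the paper's $\frac{u^2}{2+u}$ plus Cauchy--Schwarz is the cleaner way through.
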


\begin{proof}[Proof of \Cref{lem:fractionalMomentDecay}]
 Let $\alpha = \momentnumber /2 - \epsilon.$ If $\alpha \le 0$ then the lemma is easily verified because 
    \[\E[(W^m)^\theta] \le (\E [W^m])^\theta = 1,\quad m\in\N.\]
  We now consider $\epsilon$ small enough so that $\alpha > 0.$

    Using Markov's inequality and the Marcinkiewicz--Zygmund inequality (see \cite{Marcinkiewicz1937} or Chapter 5 of \cite{PetrovValentinV1975SoIR}),
    we obtain
    \begin{align*}
        \mathrm{P}^n\big\{|S_n|\ge t\big\} &\le \mathrm{E}^n[S_n^\momentnumber] {t^{-\momentnumber}}
\le C_\momentnumber n^{\momentnumber/2}t^{-\momentnumber}.
    \end{align*}
    Thus \Cref{recursiveBound}
    implies that there are constants $C_1,C$ such that
    \begin{equation}\label{momentFractionalUpperBound}
        \E[(W^{n+1})^\theta] \le \left(1 - C_1 t^{-1}\right)\E[(W^{n})^\theta] + C t^{-1-\momentnumber\theta} n^{\momentnumber\theta /2}.
    \end{equation}
Let us fix $\theta_0\in(0,1)$ and $\beta \in(0,1)$ such that
    \begin{equation}
    \alpha_2 := \beta + \momentnumber \theta \beta- \frac{1}{2}\momentnumber \theta > 1 + \alpha\label{betaThetaAlphaRelation}
    \end{equation}
    holds for all $\theta\in(\theta_0,1)$. Consider the sequence given by $t = t_n = n^{\beta}.$ We have by \eqref{momentFractionalUpperBound}, for $\theta\in (\theta_0,1),$
    \begin{align}
        \E[(W^{n+1})^\theta] &\le (1 - C_1 n^{-\beta})\E[(W^{n})^\theta] + C n^{-\alpha_2}.\label{recursiveBound1}
    \end{align}
    We now prove that there is $N\in \N$ such that for all $K>1$ and $n>N$,
    \begin{equation}
        K(1 - C_1 n^{-\beta})n^{-\alpha} + C n^{-\alpha_2} \le K(n+1)^{-\alpha}.\label{elementaryRecursiveInequality}
    \end{equation}
    Since $\beta < 1$ and $\alpha - \alpha_2 < -1$, there is $N\in \N$ such that the inequality 
    \begin{equation}
        C n^{\alpha-\alpha_2 } \le K C_1 n^{-\beta} - \alpha K n^{-1}\label{nLargeInequality}
    \end{equation}
    holds for all $n > N$ and all $K>1$. Using the Taylor expansion identity 
    \[(1 + n^{-1})^{-\alpha} = 1 - \alpha n^{-1} + o(n^{-1})\] 
    and rearranging \eqref{nLargeInequality}, we conclude that we can adjust $N$ so that
    \begin{equation*}
        C n^{\alpha - \alpha_2} \le -K (1 - C_1 n^{-\beta}) + K(1 + n^{-1})^{-\alpha}
    \end{equation*}
    for all $n>N$ and all $K>1$. Multiplying both sides of this inequality
    by $n^{-\alpha}$ and rearranging we obtain that \eqref{elementaryRecursiveInequality} holds for all $n\ge N$. 
    
    Let us fix  this $N$ and define
 $K = \max(1,N^{\alpha}\E[(W^N)^\theta])$. Note that $\E[(W^{N})^\theta] \le K N^{-\alpha}.$ Then, \eqref{recursiveBound1} and \eqref{elementaryRecursiveInequality} imply
    \begin{align*}
        \E[(W^{N+1})^\theta] &\le (1 - C_1 N^{-\beta})\E[(W^{N})^\theta]+ C N^{-\alpha_2} \\
        & \le K(1 - C_1 N^{-\beta})N^{-\alpha} + C N^{-\alpha_2} \\
        & \le K(N+1)^{-\alpha}.
    \end{align*}
    Using this computation as an induction step, one can then show that 
    \[\E[(W^n)^\theta] \le K n^{-\alpha}\]
    for all $n > N,$ which proves the lemma because $\alpha = \momentnumber/2-\epsilon.$
\end{proof}

\subsubsection{Analysis of $I(r,\mu)$}\label{overlapMetricAnalysis}
First we define the following measurement of concentration of a probability measure $\mu.$
\begin{definition}
    For a probability measure $\mu$ on $\R^d$, let 
    \[I(r,\mu) = \mu^{\otimes 2}\{(X,Y)\in(\R^d)^2:|X-Y|<r\},\qquad r>0.\]

\end{definition}
Here and below, 
$(X,Y)$ is the canonical pair of i.i.d.\ r.v.'s with distribution $\mu$.

 \Cref{thm:probComparison} gives a bound on how $I(\mu,r)$ changes with~$r$ and compares it to the concentration function of $\mu$. 
This comparison is a continuous version of display (2.8) in \cite{CSY03} valid for the lattice setting. We will need notation for Euclidean balls:
\[B(x,r)=\{y\in\R^d:\ |y-x|<r\},\quad x\in\R^d,\ r>0.\]
\begin{lemma}\label{thm:probComparison}
Let $d\in\N$.    There are constants $c=c_d,$ $C=C_d,$ and $C_d'=C'$ such that for all $R,r\in \R$ satisfying $R\ge r>0$ and for all probability measure $\mu$ on $\R^d$,
    \begin{equation}
        I(r,\mu) \le I(R,\mu) \le C \frac{R^d}{r^d}I(r, \mu)\label{eq:probComparison}
    \end{equation}
    and 
    \begin{equation}
        c\sup_{x\in \R^d}\mu\left(B(x,r)\right)^2 \le I(r,\mu) \le C' \sup_{x\in \R^d}\mu\left(B(x,r)\right). \label{eq:comparisonWithMax}
    \end{equation}
\end{lemma}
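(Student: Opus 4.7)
The proof breaks into the two displayed inequality chains, with the easy halves in each addressed first. The first inequality in \eqref{eq:probComparison} follows from the inclusion of events $\{|X-Y|<r\}\subset\{|X-Y|<R\}$ when $r\le R$. The upper bound in \eqref{eq:comparisonWithMax} is also immediate, since by Fubini
\[
I(r,\mu)=\int_{\R^d}\mu(B(x,r))\,\mu(dx)\le\sup_{x\in\R^d}\mu(B(x,r)),
\]
so $C'=1$ works. It remains to establish the upper bound in \eqref{eq:probComparison} and the lower bound in \eqref{eq:comparisonWithMax}.

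For the upper bound in \eqref{eq:probComparison}, the plan is to tile $\R^d$ by a countable family of half-open cubes $(Q_i)$ of common side length $s=r/(2\sqrt{d})$, so that each $Q_i$ has diameter $r/2<r$. Two points lying in the same cube automatically satisfy $|X-Y|<r$, which yields $I(r,\mu)\ge\sum_i\mu(Q_i)^2$. On the other hand, if $|X-Y|<R$ with $X\in Q_i$ and $Y\in Q_j$ then $d(Q_i,Q_j)<R$, and for $R\ge r$ a straightforward volume argument bounds the number of cubes $Q_j$ satisfying $d(Q_i,Q_j)<R$ by some $N_d(R/r)^d$. Applying the inequality $\mu(Q_i)\mu(Q_j)\le\tfrac12(\mu(Q_i)^2+\mu(Q_j)^2)$ and exploiting the $i\leftrightarrow j$ symmetry of the relevant index set gives
\[
I(R,\mu)\le\sum_{\substack{i,j:\\ d(Q_i,Q_j)<R}}\mu(Q_i)\mu(Q_j)\le N_d(R/r)^d\sum_i\mu(Q_i)^2\le N_d(R/r)^d\,I(r,\mu).
\]

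For the lower bound in \eqref{eq:comparisonWithMax}, I would invoke the standard doubling property of Euclidean space: there is a dimensional constant $M_d$ such that every ball of radius $r$ in $\R^d$ can be covered by $M_d$ balls of radius $r/2$, whence $\sup_x\mu(B(x,r))\le M_d\sup_y\mu(B(y,r/2))$. Since $X,Y\in B(y,r/2)$ forces $|X-Y|<r$, we have $I(r,\mu)\ge\mu(B(y,r/2))^2$ for every $y$; taking the supremum in $y$ and combining the two bounds yields $I(r,\mu)\ge M_d^{-2}\sup_x\mu(B(x,r))^2$, so $c=M_d^{-2}$ suffices.

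The only mildly delicate step is the cube-counting argument for \eqref{eq:probComparison}: one must simultaneously control the cardinality of the neighbor set of each cube (which is where the $(R/r)^d$ factor enters, and which requires the hypothesis $R\ge r$ to avoid boundary artifacts when $R$ is comparable to the side length) and linearize the bilinear quantity $\sum_{i,j}\mu(Q_i)\mu(Q_j)$ in terms of the diagonal sum $\sum_i\mu(Q_i)^2$. The AM-GM symmetrization is the clean device that accomplishes the latter; a Cauchy-Schwarz approach would introduce an additional factor that would need to be absorbed.
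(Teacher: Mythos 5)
Your proof is correct, and while it shares the core cube-tiling idea with the paper's argument, the implementation differs in several ways worth noting. For the second inequality in \eqref{eq:probComparison}, the paper works at two scales: it tiles by cubes of diagonal $r$, lower-bounds $I(r,\mu)$ by the sum of their squared masses, then uses Cauchy--Schwarz to aggregate $K^d$ small cubes (with $K=\lceil R/r\rceil$) into one cube of diagonal $rK$, and finally counts a \emph{bounded} number of neighbors at that larger scale. You instead stay at a single scale (cubes of diameter $r/2$), count the \emph{growing} set of roughly $(R/r)^d$ cubes within distance $R$, and symmetrize via AM--GM to reduce the bilinear sum to the diagonal; this avoids both the rounding of $R/r$ to an integer and the Cauchy--Schwarz aggregation. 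For \eqref{eq:comparisonWithMax}, your Fubini observation $I(r,\mu)=\int\mu(B(x,r))\,\mu(dx)$ gives the upper bound with $C'=1$ in one line, which is cleaner than the paper's route (it reuses the neighbor-counting machinery at $K=1$, producing a larger constant). For the lower bound you invoke the doubling/covering property of Euclidean balls, whereas the paper instead encloses $B(x,r)$ in a single larger cube and then applies \eqref{eq:probComparison} to change scale. Both strategies are sound; yours is somewhat more self-contained at the single-cube level, at the cost of leaving the volume estimate for the neighbor count stated but not written out (which is indeed routine given $R\ge r$).
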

\begin{proof}
Let us prove \eqref{eq:probComparison} first.
    The first inequality follows from the inclusion 
    \[\{(x,y)\in (\R^d)^2\,:\,|x-y|<r\}\subset \{(x,y)\in (\R^d)^2\,:\,|x-y|<R\},\quad R\ge r.\]
To prove the second inequality in~\eqref{eq:probComparison}, it suffices to prove
    \begin{equation}
        \mu^{ \otimes 2}\{|X-Y|< Kr\} \le C K^d \mu^{ \otimes 2}\{|X-Y|<r\}\label{simplifiedLdeltaInequality}
    \end{equation}
    for $K = \lceil \frac{R}{r}\rceil \in \N$ and some constant $C$ independent of $\mu$ and $K$, 
    because $K \le 2\frac{R}{r}.$ 

    Let $\mathcal{J}^r$ denote collection of disjoint cubes $J_{\vec{k}}^r$ of diagonal length $r$ of the form 
    \[\vec{k}r d^{-1/2} + [0,rd^{-1/2})^d,\quad \vec{k}\in \Z^d.\]
Note that if $X,Y\in J_{\vec{k}}^r$ then $|X-Y| < r.$ It follows that for all $x_0\in \R^d,$
    \begin{align}
        \mu^{\otimes 2}\{|X-Y|<r\} & \ge \mu^{\otimes 2}\bigg( \bigcup_{\vec{k}\in \Z^d}\{X,Y\in J_{\vec{k}}^r+x_0\}\bigg)\notag\\
        & = \sum_{\vec{k}\in \Z^d} \mu\Big(J_{\vec{k}}^r + x_0 \Big)^2
        .\label{unionOfIntervalsLowerBound}
    \end{align}

    Inequality \eqref{unionOfIntervalsLowerBound} with $x_0 = 0$ and the Cauchy-Schwarz inequality implies
    \begin{align}
        \mu^{\otimes 2}\{|X-Y|<r\} &\ge \sum_{\vec{k}\in \Z^d} \sum_{\vec{\ell}\in [0,K)^d\cap \Z^d} \mu\left(J_{K \vec{k}+\vec{\ell}}^r\right)^2\notag\\
        & \ge \sum_{\vec{k}\in \Z^d} \frac{1}{K^d}\bigg(\sum_{\vec{\ell}\in [0,K)^d\cap \Z^d} \mu\big(J_{K \vec{k}+\vec{\ell}}^r\big)\bigg)^2\notag\\
        & = \frac{1}{K^d}\sum_{\vec{k}\in \Z^d} \mu\left(J_{\vec{k}}^{r K}\right)^2.\label{overlapLowerBound}
    \end{align}
    Also, if $|X-Y|<r K$, then there is some $\vec{k}\in\Z^d$ such that $X\in J_{\vec{k}}^{r K}$ and $Y\in J_{\vec{k}'}^{r K}$ for some $\vec{k}'$ within distance $\sqrt{d}$ of $\vec{k}$. For $\vec{k}\in \Z^d,$ let $N(k) = \{\vec{k}'\in \Z^d\,:\,|\vec{k}-\vec{k}'|\le \sqrt{d}\}.$ It follows that 
    \begin{align}
        \mu^{\otimes 2}\{|X-Y|<Kr\} &\le \mu^{\otimes 2}\Big( \bigcup_{\vec{k}\in \Z^d}\Big\{X,Y\in \bigcup_{\vec{k}'\in N(k)} J_{\vec{k}'}^{r K}\Big\}\Big)\notag\\
        \notag 
        & \le \sum_{\vec{k}\in \Z^d} \mu\Big( \bigcup_{\vec{k}'\in N(k)}J_{\vec{k}'}^{r K}\Big)^2\\
         & \le \sum_{\vec{k}\in \Z^d} \bigg(\sum_{\vec{k'}\in N(k)}\mu\big(J_{\vec{k}'}^{r K}\big)\bigg)^2\notag\\
        & \le |N(0)| \sum_{\vec{k}\in \Z^d} \sum_{\vec{k}'\in N(k)}\mu\big(J_{\vec{k}'}^{r K}\big)^2\notag\\
        & = |N(0)|^2 \sum_{\vec{k}\in \Z^d} \mu\big(J_{\vec{k}}^{r K}\big)^2.\label{overlapUpperBound}
    \end{align}
    Inequalities \eqref{overlapLowerBound} and \eqref{overlapUpperBound} imply
    \[\mu^{\otimes 2}\{|X-Y|<Kr\} \le |N(0)|^2 K^d \mu^{\otimes 2}\{|X-Y|<r\},\]
    which proves \eqref{simplifiedLdeltaInequality} and establishes \eqref{eq:probComparison}.

    Now we establish \eqref{eq:comparisonWithMax}. Let $x\in \R^d$ and let $y = x - (r,\dots, r).$ Inequality \eqref{unionOfIntervalsLowerBound} applied to  $\sqrt{d} r$ 
    instead of $r$ and $x_0=y$, and the inclusion 
    \[B(x,r)\subset J_{\vec{0}}^{2\sqrt{d}r} + y\]
    imply
    \begin{align*}
        \mu^{\otimes 2}\{|X-Y|<2\sqrt{d}r\} \ge \mu\left(J_{\vec{0}}^{2\sqrt{d}r} + y\right)^2  \ge \mu(B(x ,r))^2.
    \end{align*}
    This and \eqref{eq:probComparison} imply 
    \[\mu(B(x,r))^2 \le \mu^{\otimes 2}\{|X-Y|<2\sqrt{d}r\}  \le C_d 2^dd^{d/2} \mu^{\otimes 2}\{|X-Y|<r\},\]
    which establishes the lower bound in \eqref{eq:comparisonWithMax}. For the upper bound, we apply
 \eqref{overlapUpperBound} with $K=1$:
    \begin{align}
        \mu^{\otimes 2}\{|X-Y|< r\} &\le |N(0)|^2 \sup_{\vec{k}\in \Z^d} \mu\big(J_{\vec{k}}^{r}\big) \sum_{\vec{k}\in \Z^d} \mu\big(J_{\vec{k}}^{r}\big)\notag \\
        & =  |N(0)|^2 \sup_{\vec{k}\in \Z^d} \mu\big(J_{\vec{k}}^{r}\big)\notag \\
        & \le |N(0)|^2   \sup_{x\in \R^d}\mu\left(B(x,r)\right),\label{overlapMaxUpperBound}
    \end{align}
and the proof is complete.
\end{proof}

Let $\mathscr{C}$ be the space of continuous functions from $\R^d$ to $\R$ equipped with the topology of uniform convergence on compact sets and let $\mathbb{Q}$ be a measure on the Borel $\sigma$-algebra of $\mathscr{C}$. The symbol $\eta(x)$ will represent the random variable $\eta\mapsto \eta(x)$ defined on $\mathscr{C}.$ We next prove a lemma relating $I(r,\mu)$ to the random variable $\eta\mapsto \int_{\R^d}\eta(x)\mu(dx)$ defined on $\mathscr{C}.$ This is a continuous setting extension of the lattice setting Lemma 3.1 of \cite{CSY03}.
\begin{lemma}\label{thm:U2bound}
    Suppose $\mathbb{Q}$ satisfies the following conditions:
    \begin{requirements}
        \item The stochastic process $(\eta(x))_{x\in \R^d}$ is stationary with respect to spatial shifts.\label{stationaryEtaReq}
        \item $\mathbb{Q}[|\eta(0)|^3]< \infty$ and $\mathbb{Q}[\eta(0)] = 0$.\label{thirdMomentEtaReq}
        \item If $R(x) = \mathbb{Q}[\eta(0)\eta(x)]$ and $W(x,y) = \E[\eta(0)\eta(x)\eta(y)]$ for $x,y\in \R^d$, then $R$ is nonnegative, is bounded away from zero in a neighborhood around the origin, and is compactly supported. $W$ is compactly supported.\label{RWReq}
    \end{requirements}
Then for every $r > 0$ there are constants $c_1,c_2,c_3>0$ such that the following holds for all probability measures $\mu$ on $\R^d$. If $U = \int_{\R^d} \eta(x)\mu(dx)$, then 
    \begin{equation}\label{eq:L2comparison}
        c_1  I(r,\mu)\le \mathbb{Q}\left[ U^2\right] \le c_2 I(r,\mu)
    \end{equation}
    and
    \begin{equation}\label{eq:U2LowerBound}
        \mathbb{Q}\left[ \frac{U^2}{2 + U}\right] \ge c_3 I(r,\mu).
    \end{equation}
\end{lemma}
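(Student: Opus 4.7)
The plan is to use the stationarity of $\eta$ to rewrite moments of $U$ as integrals of the correlation functions $R$ and $W$, then control these via their compact supports and the comparison results in \Cref{thm:probComparison}. For \eqref{eq:L2comparison}, Fubini and \Cref{stationaryEtaReq} give
\[
\mathbb{Q}[U^{2}]=\iint R(x-y)\,\mu(dx)\mu(dy).
\]
By \Cref{RWReq}, $R$ is nonnegative, bounded above by $R(0)=\mathbb{Q}[\eta(0)^{2}]<\infty$, supported in some ball $B(0,K)$, and bounded below by some $\varepsilon>0$ on some ball $B(0,r_{0})$. This gives $\varepsilon\,I(r_{0},\mu)\le \mathbb{Q}[U^{2}]\le R(0)\,I(K,\mu)$, and \eqref{eq:L2comparison} follows from the two-sided comparison of $I(r,\mu)$, $I(r_{0},\mu)$, and $I(K,\mu)$ supplied by \Cref{thm:probComparison}, with constants depending only on $r$, $r_{0}$, $K$, $d$.

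For \eqref{eq:U2LowerBound} I would use a truncation argument, exploiting that in the settings in which this lemma is applied one has $\eta\ge -1$, hence $U>-2$ a.s.\ and $U^{2}/(2+U)\ge 0$ everywhere. For a threshold $M>0$, on $\{|U|\le M\}$ one has $U^{2}/(2+U)\ge U^{2}/(2+M)$, while Markov's inequality and \Cref{thirdMomentEtaReq} give $\mathbb{Q}[U^{2}\1_{|U|>M}]\le \mathbb{Q}[|U|^{3}]/M$. Combining,
\[
\mathbb{Q}\!\left[\frac{U^{2}}{2+U}\right]\ge \frac{1}{2+M}\!\left(\mathbb{Q}[U^{2}]-\frac{\mathbb{Q}[|U|^{3}]}{M}\right).
\]
Choosing $M$ large enough so that $\mathbb{Q}[|U|^{3}]/M\le \tfrac{1}{2}\mathbb{Q}[U^{2}]$ and invoking the lower bound in \eqref{eq:L2comparison} gives \eqref{eq:U2LowerBound} with $c_{3}=c_{1}/(2(2+M))$.

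The main obstacle is producing a bound $\mathbb{Q}[|U|^{3}]\le C\,I(r,\mu)$ of the same order as the lower bound on $\mathbb{Q}[U^{2}]$, so that the threshold $M$ may be chosen uniformly in $\mu$. Stationarity gives
\[
\mathbb{Q}[U^{3}]=\iiint W(y-x,z-x)\,\mu(dx)\mu(dy)\mu(dz),
\]
and the compact support of $W$ (in, say, $B(0,K')^{2}$) together with $|W|\le \|W\|_{\infty}$ yields $|\mathbb{Q}[U^{3}]|\le \|W\|_{\infty}\int \mu(B(x,K'))^{2}\mu(dx)\le \|W\|_{\infty}\,I(K',\mu)\le C\,I(r,\mu)$ by \Cref{thm:probComparison}. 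Passing from the signed bound on $\mathbb{Q}[U^{3}]$ to the absolute bound on $\mathbb{Q}[|U|^{3}]$ is the delicate step; I would address it by decomposing $\eta=\eta^{+}-\eta^{-}$ and applying the same three-fold integral representation to $\mathbb{Q}[(U^{\pm})^{3}]$, exploiting the $M$-dependence of $\eta$ available in the applications of this lemma to ensure that the correlation kernels of $\eta^{\pm}$ share the same finite-range structure as $R$ and $W$.
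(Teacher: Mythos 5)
Your argument for \eqref{eq:L2comparison} is correct and is essentially the paper's argument: express $\mathbb{Q}[U^2]=\iint R(x-y)\,\mu(dx)\mu(dy)$, sandwich $R$ between indicator functions $C_1\1_{|\cdot|<r_0}$ and $C_2\1_{|\cdot|<K}$, and then use \Cref{thm:probComparison} to convert $I(r_0,\mu)$ and $I(K,\mu)$ to $I(r,\mu)$.

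For \eqref{eq:U2LowerBound} you diverge from the paper, and this is where the gap lies. Your truncation scheme needs a uniform-in-$\mu$ bound $\mathbb{Q}[|U|^3]\le C\,I(r,\mu)$ so that $M$ can be fixed independently of $\mu$; otherwise $c_3$ depends on $\mu$. You correctly flag this as the delicate step, but the proposed repair does not work. Decomposing $\eta=\eta^+-\eta^-$ does not yield the three-fold integral representation for $\mathbb{Q}[(U^\pm)^3]$, because $U^\pm$ (the positive and negative parts of the scalar $U$) are not $\int\eta^\pm\,d\mu$; moreover $\eta^\pm$ do not inherit the mean-zero and compactly-supported-correlation hypotheses, and appealing to the $M$-dependence ``available in the applications'' is not legitimate inside the proof of a lemma whose hypotheses do not include it. There is a simpler repair: under the (implicit, but needed for \eqref{eq:U2LowerBound} to even make sense) constraint $U\ge -1$, one has $U_-\le 1$, hence $\mathbb{Q}[U_-^3]\le\mathbb{Q}[U_-^2]\le\mathbb{Q}[U^2]$, and so $\mathbb{Q}[|U|^3]=\mathbb{Q}[U^3]+2\mathbb{Q}[U_-^3]\le \mathbb{Q}[U^3]+2\mathbb{Q}[U^2]\le C\,I(r,\mu)$. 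With that fix your truncation route would go through.

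The paper avoids the absolute third moment entirely with a Cauchy--Schwarz trick borrowed from \cite{CSY03}: write
\begin{equation*}
\mathbb{Q}[U^2]=\mathbb{Q}\!\left[\frac{U}{\sqrt{2+U}}\cdot U\sqrt{2+U}\right]
\le\left(\mathbb{Q}\!\left[\frac{U^2}{2+U}\right]\right)^{1/2}\!\left(\mathbb{Q}\!\left[2U^2+U^3\right]\right)^{1/2},
\end{equation*}
bound $\mathbb{Q}[2U^2+U^3]\le C\,I(r,\mu)$ using the signed covariance and triple-correlation integrals (only an \emph{upper} bound on $\mathbb{Q}[U^3]$ is needed, which the compactly-supported $W$ gives directly), then combine with the lower bound $\mathbb{Q}[U^2]\ge C_1 I(r_0,\mu)$ and square. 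This buys a cleaner argument, needing no control of $\mathbb{Q}[|U|^3]$ and no $\mu$-dependent threshold, and it uses the hypothesis $U>-2$ in exactly one place (to make $U\sqrt{2+U}$ well-defined), which your route also requires.
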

\begin{proof}
Due to \Cref{thm:probComparison}, it suffices to prove   
\eqref{eq:L2comparison},\eqref{eq:U2LowerBound} for one 
value $r=r_0$ of our choice. 
Using Requirements~\ref{thirdMomentEtaReq} and \ref{RWReq}, we can find a number $r_0>0$ and 
numbers $C_1,C_2,C_3,K,K' > 0$ such that for all $x,y\in \R^d$
    \begin{align}
        C_1\1_{|x|<r_0} &\le R(x),\label{RlowerBound}\\
        C_2\1_{|x|< K} &\ge R(x),\label{RupperBound}\\
        C_3 \1_{|x|<K'} &\ge W(x,y).\label{WupperBound}
    \end{align}
     Fubini's theorem and stationarity of $\eta$ imply
    \begin{align}
        \mathbb{Q}\left[U^2\right]  =  \mathbb{Q}\left[\Big(\int_{\R^d} \eta(x)\mu(dx)\Big)^2\right]
        = \int_{(\R^d)^2} R(x-y) \mu(dx)\mu(dy).\label{U2_Rformulation}
    \end{align}
    Inequality \eqref{RlowerBound} and equality \eqref{U2_Rformulation} imply
    \begin{equation}
        \mathbb{Q}\left[U^2\right] \ge C_1 \mu^{\otimes 2}\{|X-Y|<r_0\} = C_1 I(r_0,\mu).\label{U2lowerBound2}
    \end{equation}
    Inequality \eqref{RupperBound} and equality \eqref{U2_Rformulation} imply
    \begin{equation}
        \mathbb{Q}\left[U^2\right] \le C_2 \mu^{\otimes 2}\{|X-Y|<K\} = C_2 I(K,\mu).\label{U2upperBound2}
    \end{equation}
    \Cref{thm:probComparison} then implies 
    \[I(K,\mu) \le C I(r_0,\mu)\]
    for some constant $C$ independent of $\mu.$ \Cref{eq:L2comparison} is then proved for this $r_0$, and thus for any $r$ with possibly different constants. 

Similarly,
    \begin{align}
        \mathbb{Q}\left[U^3\right] &=  \int_{(\R^d)^3} W(x-z,y-z) \mu(dx)\mu(dy)\mu(dz)\notag\\
        & \stackrel{\eqref{WupperBound}}{\le}  C_3 I(K',\mu)\notag\\
        & \stackrel{\text{\Cref{thm:probComparison}}}{\le} C' I(r_0,\mu).\label{U3_Wformulation}
    \end{align}
    Now we can use exactly the same argument as in \cite{CSY03}:
    \begin{align*}
        C_1 I(r_0,\mu) &\stackrel{\eqref{U2lowerBound2}}{\le} \mathbb{Q}\left[\frac{U}{\sqrt{2+U}}U\sqrt{2+U}\right]\\
        & \le \left(\mathbb{Q}\left[\frac{U^2}{2+U}\right]\right)^{1/2}\left(\mathbb{Q}\left[2U^2 + U^3\right]\right)^{1/2}\\
        & \stackrel{\eqref{U2upperBound2},\eqref{U3_Wformulation}}{\le} C'' I(r_0,\mu)^{1/2}\left(\mathbb{Q}\left[\frac{U^2}{2+U}\right]\right)^{1/2}.
    \end{align*}
    \Cref{eq:U2LowerBound} then follows for any $r > 0$ with a possibly different constant.
\end{proof}

\subsubsection{Proof of \Cref{recursiveBound}}\label{recursiveBoundSection}

Recall that $\rho^n$ is the polymer endpoint measure at step~$n$. The lattice version of \Cref{thm:expofWILowerBound} can be found in Lemma 4.2 of \cite{CSY03}.
\begin{lemma}\label{thm:expofWILowerBound}
    For any $\theta\in (0,1)$ and $R,r>0,$ satisfying $R\ge r$ there are positive constants $C_1,C_2$ such that 
    \begin{equation}
        \E[(W^{n})^\theta I(r,\rho^{n})] \ge \frac{C_1 r}{R}\E[(W^n)^\theta] - \frac{C_2 r }{R}\left(\mathrm{P}^n\big\{|S_n|\ge R\big\}\right)^\theta.\label{WILowerBoundEq}
    \end{equation}
\end{lemma}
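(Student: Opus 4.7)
The plan is to chain three estimates from the preceding subsection: a multi-scale comparison between $I(r,\rho^n)$ and $I(R,\rho^n)$, a lower bound for $I(R,\rho^n)$ in terms of the squared mass of a centered ball, and an elementary truncation followed by H\"older to control the resulting error term using \Cref{thm:expectedNormalizedPartitionFcn}.

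First I would apply \Cref{thm:probComparison} with $d=1$ to obtain a constant $c_0>0$ such that $I(r,\rho^n)\ge c_0 (r/R)\,I(R,\rho^n)$, and the lower half of \eqref{eq:comparisonWithMax} to conclude
\[
I(R,\rho^n) \;\ge\; c\,\sup_{x\in\R}\rho^n(B(x,R))^2 \;\ge\; c\,\rho^n(B(0,R))^2.
\]
Multiplying by $(W^n)^\theta$ and taking expectation yields
\[
\E[(W^n)^\theta I(r,\rho^n)] \;\ge\; \frac{C'r}{R}\,\E\!\left[(W^n)^\theta \rho^n(B(0,R))^2\right]
\]
for a positive constant $C'$.

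Next, using the elementary inequality $(1-x)^2\ge 1-2x$ for $x\in[0,1]$ applied to $x=\rho^n(B(0,R)^c)$, the right-hand side above is bounded below by
\[
\frac{C'r}{R}\,\E[(W^n)^\theta] \;-\; \frac{2C'r}{R}\,\E\!\left[(W^n)^\theta \rho^n(B(0,R)^c)\right].
\]
The error term is handled by factoring $(W^n)^\theta \rho^n(B(0,R)^c) = \bigl(W^n\rho^n(B(0,R)^c)\bigr)^\theta \rho^n(B(0,R)^c)^{1-\theta}$ and applying H\"older's inequality with conjugate exponents $1/\theta$ and $1/(1-\theta)$:
\[
\E[(W^n)^\theta \rho^n(B(0,R)^c)] \;\le\; \bigl(\E[W^n \rho^n(B(0,R)^c)]\bigr)^\theta \bigl(\E[\rho^n(B(0,R)^c)]\bigr)^{1-\theta}.
\]
The second factor is at most $1$, and since $W^n\rho^n(A)=W^n_{0,A}$, \Cref{thm:expectedNormalizedPartitionFcn} together with Fubini gives $\E[W^n \rho^n(B(0,R)^c)]=\mathrm{P}^n\{|S_n|\ge R\}$. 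Substituting delivers \eqref{WILowerBoundEq}.

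I do not expect any serious obstacle: every input has been set up in the preceding subsection, and the argument is structurally the continuous-space analogue of the lattice computation in \cite{CSY03} that the authors explicitly cite. The only choice with any content is the truncation $(1-x)^2\ge 1-2x$, which is tight exactly when $\rho^n$ already puts most of its mass in $B(0,R)$ and is otherwise harmless because H\"older absorbs the resulting slack into $\mathrm{P}^n\{|S_n|\ge R\}^\theta$.
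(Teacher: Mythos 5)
Your proof is correct and follows essentially the same route as the paper: both chain \Cref{thm:probComparison} to bound $I(r,\rho^n)$ below by $C(r/R)\rho^n(B(0,R))^2$, both apply $(1-x)^2\ge 1-2x$, and both reduce the error term to $(\mathrm{P}^n\{|S_n|\ge R\})^\theta$ via \Cref{thm:expectedNormalizedPartitionFcn}. The only (cosmetic) difference is in the last step: the paper first uses the pointwise bound $x\le x^\theta$ on $[0,1]$ and then Jensen's inequality, whereas you apply H\"older in a single stroke — the two are equivalent in content and yield the same estimate.
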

\begin{proof}
    Inequalities \eqref{eq:probComparison} and \eqref{eq:comparisonWithMax} of \Cref{thm:probComparison} imply that for some constants $C$ and $C'$ depending on neither $r,R$ nor $\rho^n$,
    \begin{equation}
        I(r, \rho^n) \ge C  \frac{r}{R}I(2 R,\rho^n) \ge C'\frac{r}{R}\rho^n([-R,R))^2. \label{ILowerBoundRho}
    \end{equation}
        Since $\rho^n([-R,R)^c) \in [0,1]$,
        \begin{align}
            \rho^{n}([-R,R))^2 & = (1 - \rho^{n}([-R,R)^c))^2\notag\\
            & \ge 1-2\rho^{n}([-R,R)^c)\notag\\
            & \ge 1-2\rho^{n}([-R,R)^c)^\theta.\label{RhoLowerBound}
        \end{align}
        Using Jensen's inequality and \Cref{thm:expectedNormalizedPartitionFcn} we obtain
        \begin{align}
            \E[(W^{n})^\theta \rho^{n}([-R,R))^\theta] & \le (\E[W^{n}\rho^n([-R,R))])^\theta\notag\\
            & = (\E [W_{0,[-R,R)^c}^{0,n}] )^\theta\notag\\
            & = \left(\mathrm{P}^{n}\big\{|S_n|>R\big\}\right)^\theta.\label{WnRhoUpperBound}
        \end{align}
        Putting \eqref{ILowerBoundRho}, \eqref{RhoLowerBound}, and \eqref{WnRhoUpperBound} together, we obtain \eqref{WILowerBoundEq}.
\end{proof}

\begin{proof}[Proof of \Cref{recursiveBound}]
We are going to apply \Cref{thm:U2bound}, so we set
\[\eta(x) = e^{- F_n(x)-\logExp}-1\]
and define the measure $\QQ$ on $\mathscr{C}$ to be the marginal of $\Prb$ associated to the $n$-th time coordinate. Equivalently, $\QQ$ 
is a version of the conditional expectation given $(F_k)_{k\neq n}$.
  \Cref{continuousEnvironmentAssumptions} implies that $\Prb$-a.s., $\QQ$ satisfies the conditions of \Cref{thm:U2bound}.

Since    
    \begin{align*}
        \frac{W^{n+1}}{W^n}& = \frac{1}{W^n}\int_\R \int_{\R} W_x^n e^{-F_{n}(x)-\logExp}\stepdensity(y-x)dxdy\notag\\
        & = \frac{1}{W^n}\int_\R W_x^n e^{-F_{n}(x)-\logExp} dx\notag\\
        & = \int_{\R}e^{- F_n(x)-\logExp} \rho^n(dx),
    \end{align*}       
we obtain \[U = \int_\R \eta(x)\mu(dx)\]
for $\mu = \rho^n$ and $U = \frac{W^{n+1}}{W^n}-1$, and we can apply
\Cref{thm:U2bound}.

Equation \eqref{eq:U2LowerBound} of \Cref{thm:U2bound} implies that there is a deterministic constant $C$ such that $\Prb$-a.s.,
    \begin{equation}
        \QQ\left[\frac{U^2}{2+U}\right] \ge C I(1,\rho^n).\label{conditionalU2LowerBound}
    \end{equation}
Since $\theta\in(0,1)$, there is $c>0$ such that
    \begin{equation}
        (u+1)^\theta-\theta u-1 \le -c \frac{u^2}{2+u},\quad u \ge -1,
        \label{uthetaDeterministicBound}
    \end{equation}
(see equation (4.5) in \cite{CSY03}). Since $\QQ[U] = 0$,  for all $t\ge 1,$ we have
    \begin{align*}
        \E[(W^{n+1})^\theta - (W^n)^\theta] & = \E\left[ (W^{n})^\theta \QQ[(U+1)^\theta - 1] \right]\\
        & = \E\left[ (W^{n})^\theta \QQ[(U+1)^\theta - \theta U- 1] \right]\\
        & \stackrel{\eqref{uthetaDeterministicBound}}{\le} -c\E\left[ (W^{n})^\theta \QQ\left[\frac{U^2}{2+U} \right]\right]\\
        & \stackrel{\eqref{conditionalU2LowerBound}}{\le} - C \E\left[ (W^{n})^\theta I(1, \rho^n)\right]\\
        & \stackrel{\text{\Cref{thm:expofWILowerBound}}}{\le} -\frac{C_1}{t}\E[(W^n)^\theta] + \frac{C_2}{t}\left(\mathrm{P}^n\big\{|S_n|\ge t\big\}\right)^\theta.
    \end{align*}
    Then,
    \begin{align*}
        \E[(W^{n+1})^\theta] &= \E[(W^n)^\theta] + \E[(W^{n+1})^\theta - (W^n)^\theta]\\
        & \le \left(1 - \frac{C_1 }{t}\right)\E[(W^{n})^\theta] + \frac{C_2}{t}\left(\mathrm{P}^n\big\{|S_n|\ge t\big\}\right)^\theta
    \end{align*}
    as claimed.
\end{proof}

\bibliographystyle{alpha} 
\bibliography{Burgers,polymer}

\end{document}